\documentclass[11pt,letterpaper,reqno]{amsart}
\usepackage[T1]{fontenc}
\usepackage{lmodern}
\usepackage{amsmath,amssymb,amsthm,mathtools,bm}
\usepackage[margin=1in]{geometry}
\usepackage{microtype}
\usepackage{enumitem}
\usepackage{hyperref}
\usepackage{bookmark}
\usepackage{doi}
\hypersetup{bookmarksdepth=2,hidelinks,pdftitle={Holomorphic curves of finite lower order with few inflection points},
 pdfauthor={Alexandre Eremenko and Teng Zhang}}
\setlist[enumerate]{label=\textup{(\roman*)},leftmargin=*,itemsep=2pt,topsep=4pt}
\setlist[itemize]{leftmargin=*,itemsep=2pt,topsep=4pt}
\allowdisplaybreaks[1]
\newtheorem{thm}{Theorem}[section]
\newtheorem{lem}[thm]{Lemma}
\newtheorem{prop}[thm]{Proposition}
\newtheorem{cor}[thm]{Corollary}
\newtheorem{conj}[thm]{Conjecture}
\newtheorem{thmA}{Theorem}

\theoremstyle{remark}

\numberwithin{equation}{section}
\newcommand{\C}{\mathbb C}
\newcommand{\R}{\mathbb R}
\newcommand{\Z}{\mathbb Z}
\newcommand{\PP}{\mathbb P}
\newcommand{\dd}{\,\mathrm d}
\newcommand{\dA}{\,\mathrm dA}
\newcommand{\loc}{\mathrm{loc}}
\newcommand{\TN}{T_{\mathrm N}}
\newcommand{\mean}[2]{\mathcal M_{#1}\!\left[#2\right]}
\newcommand{\norm}[1]{\left\lVert#1\right\rVert}
\newcommand{\convmeas}{\xrightarrow{\,\mathrm{meas}\,}}
\newcommand{\fv}{\bm f}
\newcommand{\gv}{\bm g}
\newcommand{\pv}{\bm p}
\newcommand{\hv}{\bm h}
\newcommand{\yv}{\bm y}
\newcommand{\ev}{\bm e}
\newcommand{\Gn}{\mathcal R_{n+1}}
\DeclareMathOperator{\supp}{supp}
\DeclareMathOperator{\ord}{ord}
\DeclareMathOperator{\area}{area}

\DeclareMathOperator{\Ree}{Re}
\newcommand{\step}[1]{\medskip\noindent\emph{#1}\ }

\begin{document}
\title[Curves with few inflection points]{Holomorphic curves of finite lower order with few inflection points}
\author[A.~Eremenko]{Alexandre Eremenko}
\address{Mathematics Department, Purdue University, West Lafayette, IN 47907, USA}
\email{eremenko@purdue.edu}
\author[T.~Zhang]{Teng Zhang}
\address{School of Mathematics and Statistics, Xi'an Jiaotong University,
Xi'an 710049, P. R. China}
\email{teng.zhang@stu.xjtu.edu.cn}
\date{}
\subjclass[2020]{Primary 30D35; Secondary 30D15, 31A05, 34M05, 14M15}
\keywords{Holomorphic curve, inflection point, Wronskian, regular variation,
Cartan--Nevanlinna theory}
\begin{abstract}
We prove a conjecture posed by the first-named author in 1998. Let
$f\colon\C\to\PP^n$ be a transcendental linearly non-degenerate holomorphic
curve of finite lower order. If the counting function $N_1(r,f)$ of its
Wronskian zeros satisfies $N_1(r,f)=o(T(r,f))$, then its order and lower order
coincide and belong to
$\{1+k/q:k\in\Z_{\ge0},\ 2\le q\le n+1\}$, and its characteristic is
regularly varying. Every order in this set occurs. We also prove the sharp
inequality $\limsup_{r\to\infty}N_1(r,f)/T(r,f)\ge1$ for transcendental
linearly non-degenerate curves of order zero.
\end{abstract}
\maketitle
\tableofcontents

\section{Introduction}\label{sec:intro}

We use the standard terminology and facts of Nevanlinna theory in
one complex variable; see, for example, \cite{Hay64}. The main definitions
will be recalled below for holomorphic curves in projective space.
The lower order and order of a meromorphic function $f$ are defined by
\[
 \lambda:=\liminf_{r\to\infty}\frac{\log T(r,f)}{\log r}
 \leq\limsup_{r\to\infty}\frac{\log T(r,f)}{\log r}=: \rho.
\]
Thus $0\leq\lambda\leq\rho\leq+\infty$.

The simplest prototype of the results considered here is the following
classical elementary fact. If $f$ is a transcendental entire function of
finite lower order such that $f'(z)\ne0$ for every $z\in\C$, then
\[
 f(z)=\int_0^z\exp P(\zeta)\,\mathrm d\zeta+C,
\]
where $P$ is a nonconstant polynomial. In particular,
$
 \rho(f)=\lambda(f)=\deg P\geq1.
$

For meromorphic functions, the
analogous argument is somewhat harder. If $f$ is a transcendental
meromorphic function of finite lower order with no critical points, then
its Schwarzian derivative is entire, and the lemma on the
logarithmic derivative
shows that the Schwarzian derivative is a polynomial. Consequently $f=w_1/w_0$, where
$w_0,w_1$ are linearly independent entire solutions of
\[
 w''+Pw=0
\]
with a polynomial coefficient $P$. In the transcendental case $P\not\equiv0$.
Well-known asymptotic integration theory (see, for example,
\cite{Was65}) shows that the lower order and order coincide and are equal to
$(\deg P+2)/2\geq1$.
This observation is due to Frithiof Nevanlinna \cite{Nev30}, who conjectured
that finite order together with the weaker condition
$N_1(r)=o(T(r))$ on critical points suffices for the same conclusion about
regularity of growth. When discussing a fixed function, we omit it from
the notation when no confusion can arise.

In what follows we assume that $f$ is transcendental.
Recall the Second Fundamental Theorem of Nevanlinna:
\begin{equation}\label{1}
 \sum_{j=1}^q m(r,a_j)+N_1(r)\leq(2+o(1))T(r),\qquad r\to\infty.
\end{equation}
Here the $a_j$ are distinct points of $\PP^1=\C\cup\{\infty\}$,
$m(r,a_j)$ are proximity functions, $T(r)$ is the Nevanlinna
characteristic, and
\[
 N_1(r):=N(r,0,f')-N(r,f')+2N(r,f)
\]
is the counting function of critical points. For functions of finite order,
\eqref{1} holds as written; in general one excludes a set of $r$'s of
finite length.

The \emph{deficiency} is defined as
\[
 \delta(a,f)=\liminf_{r\to\infty}\frac{m(r,a)}{T(r)}
 =1-\limsup_{r\to\infty}\frac{N(r,a)}{T(r)}.
\]
The equality follows from the First Main Theorem, and
$0\leq\delta(a,f)\leq1$. For functions of finite order, \eqref{1} gives
the defect relation
\begin{equation}\label{2}
 \sum_{a\in\PP^1}\delta(a,f)
 +\limsup_{r\to\infty}\frac{N_1(r)}{T(r)}\leq2.
\end{equation}
In particular, the maximal sum of deficiencies condition
\begin{equation}\label{sum}
 \sum_a\delta(a,f)=2
\end{equation}
implies F.~Nevanlinna's small-ramification condition
\begin{equation}\label{smal}
 N_1(r)=o(T(r)).
\end{equation}

There is a substantial literature on the consequences of \eqref{sum}
for functions of finite order
\cite{Pfl46,Wei69,Dra81,Ere89a,Ere89b,ES91}; for further discussion, see
\cite{Ere93,Ere02}. The final result was proved by Drasin \cite{Dra87}.

\medskip\noindent
\textbf{Drasin's Theorem.} \emph{Let $f$ be a transcendental meromorphic
function of finite order $\rho$ satisfying \eqref{sum}. Then:
\begin{itemize}
\item[(i)] $2\rho$ is an integer not less than $2$;
\item[(ii)] $\delta(a,f)=p(a)/\rho$, where the $p(a)$ are nonnegative integers.
\end{itemize}}

Drasin's theorem also has a third conclusion, about asymptotic values,
that is not needed
for our discussion.
The first-named author, in collaboration with M.~Sodin, developed an
alternative approach to Drasin's theorem in \cite{Ere89a,Ere89b,ES91},
culminating in the
following result from \cite[p.~1196]{Ere93}.

A positive continuous function $\ell:(0,\infty)\to(0,\infty)$ is called
\emph{slowly varying}, in the sense of Karamata, if
$\ell(cr)/\ell(r)\to1$ uniformly for $c\in[1,2]$ as $r\to\infty$.

\begin{thmA}\label{thm:A}
Let $f$ be a transcendental meromorphic function
of finite lower order $\lambda$ satisfying \eqref{smal}. Then:
\begin{itemize}
\item[(a)] $\rho=\lambda=m/2$, where $m\geq2$ is an integer, and
$T(r)\sim r^\rho\ell(r)$ for a slowly varying function $\ell$;
\item[(b)] $\delta(a,f)=p(a)/\lambda$, $a\in\PP^1$,
where the $p(a)$ are nonnegative
integers satisfying
\[
 \sum_{a\in\PP^1}p(a)=2\lambda,
\]
so that \eqref{sum} holds.
\end{itemize}
\end{thmA}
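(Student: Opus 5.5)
The plan is the Eremenko--Sodin potential-theoretic rescaling argument. Write $f=f_1/f_0$ with $f_0,f_1$ entire without common zeros. Set $u_j=\log|f_j|$ and $u=\max(u_0,u_1)$, so that $T(r)=\frac1{2\pi}\int_0^{2\pi}u(re^{i\theta})\,\mathrm d\theta+O(1)$ by Cartan's identity. The key identity is
\[
 \log|W(f_0,f_1)| = u_0+u_1-\log|f_0|-\log|f_1|+\log|W|.
\]
The Wronskian $W=f_0f_1'-f_1f_0'$ has counting function $N_1(r)$, and by the lemma on the logarithmic derivative $\log|W|\le u_0+u_1+o(T)$ in an averaged sense. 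Hence the Riesz measure of $\log|W|$ has mass $o(T)$, while $\log|W|$ is bounded above by $2u$ up to $o(T)$ in mean. Take Pólya peak sequences $r_k$ of order $\lambda$. Finite lower order guarantees they exist and that the rescaled functions $u_j(r_kz)/T(r_k)$ are locally bounded in $L^1$, so they form a normal family of subharmonic functions.

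\textbf{Step 1: the limit functions.} Pass to limits $u_j(r_kz)/T(r_k)\to v_j$ and $\log|W(r_kz)|/T(r_k)\to w$. Because $N_1=o(T)$, the limit $w$ is harmonic. A standard argument on the size of $\log|W|$ compared with $u$, using the logarithmic derivative lemma on the rescaled scale, shows that $w=v_0+v_1$ wherever both are relevant. More precisely, one shows $w=2\max(v_0,v_1)$ up to an additive harmonic correction. This yields that $v=\max(v_0,v_1)$ satisfies $\Delta v = $ measure supported where $v_0=v_1$, with $v_0+v_1$ harmonic. Thus $|v_0-v_1|$ is locally the modulus of a harmonic function, and $v$ is "piecewise harmonic" with the harmonic functions $\pm h$ differing by sign across analytic curves. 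In particular $v=\frac12\big(w+|h|\big)$, where $w$ and $h$ are harmonic in $\C$.

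\textbf{Step 2: rigidity.} The Pólya peak normalization gives $v(z)\le |z|^\lambda$ (up to $\varepsilon$) with equality of means at $|z|=1$, and $v$ has growth $O(|z|^\lambda)$. Entire harmonic functions $w,h$ of polynomial growth $O(|z|^\lambda)$ are harmonic polynomials. The extremality at $|z|=1$ forces $v(z)=c|z|^\lambda$-type homogeneity, so $v$ is homogeneous of degree $\lambda$. A homogeneous $\frac12(w+|h|)$ with $w,h$ harmonic polynomials requires $h=\Ree(az^{m/2})$ locally, with the degree made consistent across sign changes. Single-valuedness of $|h|$ allows half-integer degree, which gives $\lambda=m/2$. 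Here $m\ge2$, since $m=1$ would force $f$ to be rational or contradict the transcendence of $f$. The deficiencies are read off from the limit: $m(r,a)/T(r)$ tends to the proportion of the circle on which $v_a<v$. For $v=|\Ree z^{\lambda}|$-type functions, this yields $p(a)/\lambda$ with integer $p(a)$ counting the sectors, and the sectors sum to $2\lambda$. Applying a Möbius change of coordinates reduces any $a$ to $0$ or $\infty$.

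\textbf{Step 3: regular variation.} This is the main obstacle. The limit set of the family $u(rz)/T(r)$ over all $r\to\infty$ (not just peaks) is connected, and by Step 1 every element has the structure $\frac12(w+|h|)$. Normalization then forces each such limit to be homogeneous of a degree from the discrete set $\{m/2\}$. I would prove that every limit function is of this homogeneous form by showing that the class of possible limits is compact and invariant under $z\mapsto cz$. Discreteness plus connectedness then forces a single degree $\rho$ for all limits. It follows that $T(cr)/T(r)\to c^\rho$ uniformly on $[1,2]$, which is $T(r)=r^\rho\ell(r)$ with $\ell$ slowly varying, and $\rho=\lambda$. The hardest point is excluding non-homogeneous limits of growth between $\lambda$ and $\rho$. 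For this I would use a Phragmén--Lindelöf comparison on each limit, combined with the fact that $\frac12(w+|h|)$, with $w,h$ harmonic of order at most $\rho<\infty$, is a finite combination of homogeneous pieces. I would then argue that the dominant degree must be the same at all scales, by the connectedness above.
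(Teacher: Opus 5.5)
The paper does not reprove Theorem~A; it quotes it from \cite{Ere93}. The case $n=1$ of Theorem~\ref{thm:main} recovers part (a), so that is the argument to compare with. The decisive gap is in your Step~1.

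Your ``key identity'' is a tautology. The lemma on the logarithmic derivative only bounds $\log|W|-\log|f_0|-\log|f_1|$ from above in mean, and in the limit this gives the one-sided inequality $v_0+v_1\ge w$ (as in Lemma~\ref{lem:sum}), never an equation. The structure you assert is in fact false. Let $f=w_1/w_0$ with $w_0,w_1$ independent solutions of $w''=zw$. Then:
\begin{itemize}
\item $f$ has no critical points and order $3/2$;
\item its Wronskian is constant, so your $w\equiv0$;
\item its rescaled limit is $v=c|\Ree z^{3/2}|$ with $c>0$, which is not $\tfrac12|h|$ for any harmonic $h$ on $\C$ (Liouville would make $h$ affine);
\item $v_0+v_1$ is not harmonic for any choice of basis.
\end{itemize}

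In Step~2 you switch to ``$h=\Ree(az^{m/2})$ locally'', but then the Liouville argument is lost. A local harmonic-modulus structure with single-valued $|h|$ cannot exclude $m=1$: $|\Ree z^{1/2}|$ is a legitimate subharmonic function of exactly this type. Your assertion that $m=1$ would make $f$ rational is unsupported.

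What is missing is a pointwise differential constraint on the limit. In the paper the gauge is chosen so that the Wronskian becomes a polynomial with $o(s_\nu)$ zeros (Proposition~\ref{prop:representation}). The rescaled coefficient of $g''+b_1g'+b_2g=0$, which up to terms vanishing in measure is half the rescaled Schwarzian of $f$, converges in measure to an \emph{entire} function $a_2$. Lemma~\ref{lem:logderivlimit} then yields $(2\partial v_j)^2+a_2=0$ almost everywhere. The peak bounds at every scale $R$ and Cauchy's estimates force $a_2=cz^{2(\mu-1)}$ with $c\ne0$, hence $2(\mu-1)\in\Z_{\ge0}$. Holomorphy of $a_2$ at the origin is exactly what rules out $\mu=1/2$, since $(2\partial|\Ree z^{1/2}|)^2=1/(4z)$. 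No homogeneity of $v$ at the peaks is needed. Your claim that extremality at $|z|=1$ forces homogeneity is unjustified anyway, since the peak inequality only controls circular means.

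The later steps have further gaps.
\begin{itemize}
\item The case $\lambda=0$ is not treated. Rescaling at peaks of order zero yields only constant limits, from which nothing follows. This is why the paper needs the separate argument of Section~\ref{sec:smallorder}, and why \cite{Ere93} used Shea's theorem.
\item In Step~3 you only know that the lower order is finite. So the family $u(rz)/T(r)$ need not be compact as $r\to\infty$ (compactness requires $T(2r)=O(T(r))$), and $\rho=\lambda$ cannot be extracted from it. The paper instead runs the peak argument for \emph{every} $\mu\in[\rho_*(T),\rho^*(T)]$ (Lemma~\ref{lem:peaks}). Discreteness of the admissible orders then collapses the index interval to a point (Proposition~\ref{prop:indices}).
\item Even then, ``discrete degrees plus connectedness'' does not make a limit along an arbitrary sequence homogeneous. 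The function $|\Ree z-1|$ has your local structure, has degree one at infinity and mean $1$ on the unit circle, and even satisfies the limiting equation with $a_2=-1$, yet it is not homogeneous. The paper excludes such limits using four ingredients: the monomial form of the limiting coefficients along every sequence; $U(0)=0$ from the two-sided power bounds; convexity of the $v_j$ in the variable $z^\rho/\rho$ (Lemma~\ref{lem:finite-gradient-convex}); and the balance $\sum_jv_j(a)=0$ produced by a unitary change of basis at a point (Lemma~\ref{lem:basis-at-point}).
\item In (b), the limit of $m(r,a)/T(r)$ is $\mean1{v-v_a}$, not the proportion of the circle where $v_a<v$. For $e^z$ and $a=0$ that proportion is $1/2$, while $\delta=1$.
\item Also in (b), you assert but do not prove that each of the $2\lambda$ sectors where $v>0$ carries some $a$ with $v_a=-v$; this is what gives $\sum p(a)=2\lambda$. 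The present paper does not treat (b) at all.
\end{itemize}
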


Since conclusion (b) implies the hypothesis of Drasin's theorem,
Theorem~\ref{thm:A} is both a strengthening and a generalization of that theorem.
Its proof is based on potential theory and does not use Drasin's theorem.

For transcendental meromorphic functions of finite lower order,
\eqref{sum} and \eqref{smal} are equivalent; see
\cite[Theorems~B and~C]{Ere98}. The implication
\eqref{smal}$\Rightarrow$\eqref{sum} follows from
Theorem~\ref{thm:A}, and no other proof of this implication for functions
of finite lower order is known. As we explain below, this equivalence is
specific to the scalar case. The asymptotic behavior of these functions
is described in more detail in \cite{Ere93}.

We now pass to holomorphic curves. Throughout the rest of the paper,
we fix an integer $n\geq1$ and denote complex projective space of dimension $n$ by $\PP^n$. 
A \emph{reduced homogeneous representation} of a curve
$f:\C\to\PP^{n}$ is a vector
\[
 \fv=(f_0,\ldots,f_n),\qquad f=[\fv],
\]
of entire functions with no zeros common to all of them.
Two reduced representations of
the same curve differ by multiplication by a zero-free entire function.
A curve is \emph{linearly non-degenerate} if its image is not contained in
a hyperplane, equivalently if its coordinate functions are linearly
independent. A curve is called \emph{rational}
if it extends holomorphically to
$\PP^1$; otherwise it is \emph{transcendental}. Rational curves are exactly
those which allow homogeneous representations with all polynomial coordinates.

The Cartan--Nevanlinna characteristic is
\[
 T(r,f)=T(r,\fv):=
 \frac1{2\pi}\int_{-\pi}^{\pi}\log\norm{\fv(re^{it})}\,\mathrm dt
 -\log\norm{\fv(0)},
\]
where $\norm{\cdot}$ is the Euclidean norm. This definition does not
depend on the reduced representation. The order and lower order of the
curve are defined in terms of this characteristic as in the scalar case.

For a rational curve we have $T(r,f)=O(\log r)$, while for a transcendental
one,
$T(r,f)/\log r\to\infty$, $r\to\infty$.

The analog of the critical-point counting function is the counting
function of inflection points. It is traditionally called the \emph{ramification term}; this terminology
will be used only for the counting function, not for the underlying points. More precisely, let
\[
 W_{\fv}=W(f_0,\ldots,f_n)
 :=\det\bigl(f_j^{(i)}\bigr)_{0\leq i,j\leq n}
\]
be the Wronskian of a reduced representation. If $n_1(r)$ counts its zeros
in $|z|\leq r$, with multiplicity, then
\[
 \begin{split}
 N_1(r,f)&=\int_0^r\bigl(n_1(t)-n_1(0)\bigr)\frac{\mathrm dt}{t}
                 +n_1(0)\log r\\
 &=\frac1{2\pi}\int_{-\pi}^{\pi}\log|W_{\fv}(re^{it})|\,\mathrm dt+c(\fv).
 \end{split}
\]
This counting function is independent of the reduced representation.
Indeed, if $\widetilde{\fv}=g\fv$ is another reduced representation,
where $g$ is an entire function without zeros, then
\[
W_{\widetilde{\fv}}=g^{n+1}W_{\fv},
\]
so $W_{\widetilde{\fv}}$ and $W_{\fv}$ have the same divisor of zeros.
Except in one paragraph below explicitly mentioning degenerate curves, all
curves in this paper are assumed to be linearly non-degenerate; thus
$W_{\fv}\not\equiv0$.

Proximity functions and deficiencies are defined for hyperplanes in
$\PP^n$; see \cite{Car33,Ru21}. Cartan's Second Main Theorem
\cite[Theorem~7.1, equation~(7.3)]{GH04}
implies the following defect relation for transcendental curves of finite order:
\begin{equation}\label{cartan}
 \sum_{a\in A}\delta(a,f)
 +\limsup_{r\to\infty}\frac{N_1(r,f)}{T(r,f)}\le n+1.
\end{equation}
for a transcendental curve of finite order. 
Here $A$ is any set of
hyperplanes in general position: the intersection of any \(n+1\) of them is
empty. Such a system of hyperplanes is also called \emph{admissible}. 
Without the
finite-order assumption, the Second Main Theorem underlying \eqref{cartan}
requires
an exceptional set of finite length.

The following conjecture was stated in \cite[Section~1]{Ere98}; see also \cite{Ere15}.

\begin{conj}\label{conj:regularity}
If a linearly non-degenerate holomorphic curve of finite lower order
satisfies \eqref{smal}, then its order and lower order coincide and their
common value is rational.
\end{conj}

Conclusion (b) of Theorem~\ref{thm:A} does not extend to
target dimensions at least
two. Examples in \cite{Ere98}
have finite order and $N_1\equiv0$, but no
admissible system $A$ satisfies
\[
 \sum_{a\in A}\delta(a,f)=n+1.
\]

If the linear non-degeneracy condition is dropped, the defect relation becomes
\[
 \sum_{a\in A}\delta(a,f)\leq2n,
\]
provided the curve is nonconstant and its image is not contained in the
union of the hyperplanes of the admissible system $A$.
This was proved for the first time by E.~Nochka \cite{Noc83}; complete proofs
are given in
\cite{Voj97,Ru21}, and an alternative proof is contained in \cite{ES92}.
In this setting, \cite{Ere98} describes the curves with maximal
deficiency sum $2n$, in a form analogous to conclusions (a) and (b)
of Theorem~\ref{thm:A}.

We concentrate in this paper on the regularity conclusion (a).
Our main result
settles Conjecture~\ref{conj:regularity}. 

\begin{thm}\label{thm:main}
Let $f:\C\to\PP^{n}$ be a transcendental linearly non-degenerate
holomorphic curve of finite lower order satisfying
\begin{equation}\label{eq:mainhyp}
 N_1(r,f)=o\bigl(T(r,f)\bigr).
\end{equation}
Then its order and lower order coincide, and their common value $\rho$
belongs to the set
\begin{equation}\label{ord}
 \mathcal R_{n+1}:=\left\{1+\frac{k}{q}:\ k\in\Z_{\ge0},\
                           q\in\Z,\ 2\leq q\leq n+1\right\}.
\end{equation}
Moreover,
\begin{equation}\label{reg}
 T(r,f)=r^\rho\ell(r),
\end{equation}
where $\ell$ is slowly varying. 
\end{thm}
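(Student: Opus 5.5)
The plan follows the potential-theoretic approach of Eremenko--Sodin behind Theorem~\ref{thm:A}, adapted to curves via the associated linear ODE.

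\emph{Normalization and ODE.} Fix a reduced representation $\fv$. The coordinates $f_0,\dots,f_n$ form a fundamental system of a linear ODE
\[
 w^{(n+1)}+a_1w^{(n)}+\dots+a_{n+1}w=0,
\]
whose coefficients are ratios of Wronskian-type minors divided by $W_{\fv}$. Multiplying $\fv$ by a zero-free $g$ lets us kill $a_1$ (take $g^{n+1}W_{\fv}$ to have no exponential factor). The coefficients $a_j$ are then meromorphic with poles only at zeros of $W_{\fv}$. By the lemma on the logarithmic derivative for curves of finite lower order (along a sequence $r_k$ realizing the lower order), $m(r,a_j)=O(\log r)$ on that sequence, and $N(r,a_j)\le C\,N_1(r)$. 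So hypothesis \eqref{eq:mainhyp} gives $T(r,a_j)=o(T(r,f))$ along such sequences.

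\emph{Rescaling and limit subharmonic functions.} Consider
\[
 u_r(z)=\frac{\log\|\fv(rz)\|}{T(r,f)}
\]
along Pólya peak sequences $r_k$, which exist since the lower order is finite. These families are precompact in $L^1_{\loc}$, and their limits $u$ are subharmonic functions of order at most the lower order, normalized by $T(1,u)=1$.

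The key step is to show that the smallness of $N_1$ and of the coefficients forces each limit $u$ to have the following local structure. Off a set that is small in the limit, the solutions behave WKB-like, $f_j\approx \exp\bigl(\int \omega_i\bigr)$, where $\omega_i$ are the roots of $\omega^{n+1}+a_2\omega^{n-1}+\dots=0$. Taking limits, $u$ is locally the maximum of harmonic functions $\Ree \phi_i$, with $\phi_i'$ branches of an algebraic function of degree at most $n+1$. Because of the scaling and the smallness of the coefficients, these limit functions are homogeneous: $u(z)=|z|^\rho h(\arg z)$, with $h$ piecewise trigonometric and $\rho$ determined by how the $n+1$ branches permute around $\infty$. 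The branch points give cycles of length $q\le n+1$. A cycle of length $1$ corresponds to exponents giving the integer case $k/1$, which is already contained in $\mathcal R_{n+1}$ via $q=2$ and even $k$. This yields $\rho-1\in\frac1q\Z_{\ge0}$, with $\rho\ge1$ forced by transcendence, exactly as $\deg P\ge0$ forces $\rho\ge1$ in the scalar case.

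\emph{Rigidity implies regular variation.} The set of possible limit orders is discrete, while the set of limit functions obtained from $T(rt)/T(r)$ along all sequences is connected (by continuity in $r$). Hence every limit has the same order $\rho\in\mathcal R_{n+1}$, and every limit of $T(rt,f)/T(r,f)$ equals $t^\rho$. This is precisely \eqref{reg} with $\ell$ slowly varying, and it forces the order and lower order to coincide.

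\emph{Main obstacle.} The hardest step is the second one. Passing the ODE asymptotics to the limit requires controlling the exceptional sets near Wronskian zeros, and justifying that the limits of $\log|\text{coefficient}|/T$ vanish while the $\log\|\fv\|/T$ limits remain governed by an algebraic function. This is only legitimate with $o(T)$ information, not pointwise. I would first try to prove the structure statement directly in potential-theoretic form: show that $\log\|\fv\|$ is, up to $o(T)$, the maximum of $n+1$ functions $\log|\text{coordinate in a moving frame}|$ that are harmonic away from a small set. To do this I would use the Wronskian identity $W_{\fv}=\det(\fv,\fv',\dots,\fv^{(n)})$ together with the associated curves $F_k=\fv\wedge\dots\wedge\fv^{(k)}$ and Plücker-type relations, so that the Laplacians of the $\log\|F_k\|$ are controlled by $N_1$.
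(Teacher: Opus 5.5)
\textbf{There is a genuine gap: the decisive step is missing.} To pass to the limit in the rescaled equation you need the normalized coefficients $(r_\nu/T(r_\nu))^qQ_q(r_\nu z)$ to be relatively compact in measure with \emph{holomorphic} limits $a_q$. Only then do the peak bound $\sup_{|z|\le R}|a_q|\le KR^{q(\mu-1)}$ and Cauchy's estimates force $a_q=c_qz^{q(\mu-1)}$ and identify $\mu$.

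Even granting your input $T(r,Q_q)=o(T(r,f))$, it only says $\log|Q_q|/T\to0$. That is invisible at the polynomial scale $(T/r)^q$ that actually matters. It also gives no control of the singular parts at the $o(T)$ poles of order up to $q$, which may coalesce. Neither the WKB heuristic nor associated curves fill this. $\Delta\log\norm{F_k}$ is the Wronskian divisor only for $k=n$; for $k<n$ it is a positive curvature term whose mass is of the size of the $k$th associated characteristic, not of $N_1$.

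The ingredient you are missing is the Karp--Purbhoo universal Pl\"ucker formula. For a polynomial solution space it gives $b_q=(-1)^q\sum_{|I|=q}\gamma_{q,I}\prod_{\ell\in I}(z-a_\ell)^{-1}$ with $|\gamma_{q,I}|\le q!$, no matter how the roots cluster. After Taylor approximation of degree $O(s_\nu)$, the $o(s_\nu)$ Wronskian roots inside a disk contribute terms tending to $0$ in measure. The $O(s_\nu)$ exterior roots give a uniformly bounded holomorphic part, which yields holomorphic limits (Proposition~\ref{prop:localcompact}).

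Your gauge step is also wrong as stated. No zero-free multiplier kills $a_1=-W'/W$ unless $W$ is zero-free. One must use the local gauge $W^{-1/(n+1)}$, whose coefficients $Q_q$ are nevertheless single-valued.

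Three later steps also fail as written.

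\emph{Homogeneity.} Monomial coefficient limits do not by themselves make the limit homogeneous. The component limits only satisfy $(2\partial v)^{n+1}+\sum_qc_qz^{q(\rho-1)}(2\partial v)^{n+1-q}=0$ almost everywhere. So piecewise $v=\Ree(cz^\rho/\rho)+\mathrm{const}$, with constants that may jump. The paper obtains homogeneity (Proposition~\ref{prop:homogeneity}) by passing to $w=z^\rho/\rho$. There the gradient takes finitely many values, so each $V_j$ is convex (Bergkvist--Rullg{\aa}rd). A unitary basis change at a generic point $a$ makes $\sum_jv_j(a)=0$. Together with $\sum_jv_j\ge0$ and $U(0)=0$, this forces linearity along rays.

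\emph{Arbitrary sequences.} You derive structure only along P\'olya peaks, but your connectedness argument uses limits of $T(tr)/T(r)$ along all sequences. Such limits need not exist or be homogeneous without two-sided power bounds, which are unavailable when only the lower order is assumed finite. The paper instead shows that every finite positive point of $[\rho_*(T),\rho^*(T)]$ is a peak order lying in the locally finite set $\mathcal R_{n+1}$. Hence the interval is a single point (Proposition~\ref{prop:indices}), and only then are arbitrary sequences handled.

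\emph{Order zero.} Order zero is not excluded by rescaling. There are no peaks of positive order, all coefficient limits vanish, and the rescaled limits are constants (e.g.\ $U\equiv1$). This is consistent, so no contradiction arises. Orders in $(0,1)$ are excluded by the peak argument, but $0$ is not. Your claim that transcendence forces $\rho\ge1$ is exactly what needs the separate argument of Section~\ref{sec:smallorder}. That argument uses the entire majorant $|y_j(z)|\le\frac{|z|^j}{j!}\prod_\ell(1+|z|/|a_\ell|)$ and suitably chosen radii.
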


The set of orders in \eqref{ord} is sharp: for every
\(\rho\in\mathcal R_{n+1}\) there are transcendental linearly
non-degenerate curves with \(W_{\fv}\equiv1\) and
\(T(r,\fv)\sim c r^\rho\) for some \(c>0\).
Such curves are obtained by taking a fundamental system of solutions of
\(w^{(n+1)}=z^k w^{(n+1-q)}\) as homogeneous coordinates.
A direct proof of the required growth estimates is given in
Proposition~\ref{prop:sharpness-orders}.

We also obtain the following quantitative result for curves of order zero.

\begin{cor}\label{cor:zero-ratio}
If $f:\C\to\PP^{n}$ is a transcendental linearly non-degenerate curve
of order zero, then
\[
 \limsup_{r\to\infty}\frac{N_1(r,f)}{T(r,f)}\geq1.
\]
\end{cor}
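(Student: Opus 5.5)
Argue by contradiction and lean on Theorem~\ref{thm:main}. Suppose $f$ is transcendental, linearly non-degenerate, of order zero, and $\limsup_{r\to\infty}N_1(r,f)/T(r,f)=\beta<1$. If $\beta=0$, then \eqref{eq:mainhyp} holds and order zero is finite lower order, so Theorem~\ref{thm:main} gives $\rho\in\mathcal R_{n+1}$. In particular $\rho\ge1$, a contradiction. The whole difficulty is therefore the range $0<\beta<1$: there we need a \emph{quantitative} form of the mechanism behind Theorem~\ref{thm:main}, not just its $o(T)$ version.

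\textbf{Reduction.} Choose a Wronskian-normalized representation. Divide $\fv$ by an entire function $h$ with $h^{n+1}=W_{\fv}/P$, allowing $h$ to have zeros. This gives a meromorphic vector $\gv=\fv/h$ whose Wronskian is essentially constant. Equivalently, the coordinates of $\gv$ form a fundamental system of a linear ODE
\[
 w^{(n+1)}+a_{n-1}w^{(n-1)}+\dots+a_0w=0
\]
with meromorphic coefficients $a_j$. These coefficients are differential polynomials in logarithmic derivatives of ratios $f_j/f_0$. Since $f$ has order zero, the lemma on the logarithmic derivative, applied without exceptional set, gives $m(r,a_j)=O(\log r)$. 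Their poles lie only at Wronskian zeros, with bounded order per zero. Hence
\[
 T(r,a_j)\le C\,N_1(r,f)+O(\log r).
\]

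\textbf{Potential-theoretic core.} Let $u(z)=\log\norm{\gv(z)}$, more precisely $\log\norm{\fv}-\tfrac1{n+1}\log|W_{\fv}|$. This $u$ is a difference of subharmonic functions. Its Riesz measure's negative part comes from $\frac1{n+1}$ times the zero divisor of $W_{\fv}$. So
\[
 T(r,f)=\frac1{2\pi}\int u\,dt+\frac1{n+1}N_1(r,f)+O(1).
\]
Now pass to limit sets of rescalings $u(r_kz)/T(r_k)$ along Pólya peak sequences. Order zero guarantees such peak sequences with arbitrarily small growth exponent, and the limits are $\delta$-subharmonic functions in the plane. In the $o(T)$ case, the argument of Theorem~\ref{thm:main} shows that every limit is subharmonic and, by the ODE structure, locally piecewise of the form $\max_j\Ree(c_jz^{\rho})$. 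Such a function has order at least $1$, and a nonzero limit of order $\ge1$ cannot arise along peaks of order near $0$.

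Here the limit instead carries a negative mass of total proportion at most $\beta/(n+1)$ relative to $T$. The plan is to show that on a peak sequence of exponent $\epsilon\to0$ the rescaled limit must be essentially harmonic away from this negative mass. One would then compare characteristics: a $\delta$-subharmonic function of order $0$ with Nevanlinna characteristic $1$ needs its negative Riesz mass to produce at least the full characteristic. The quantitative form to aim for is that the limit of $N_1/T$ along the peaks is at least $1$.

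\textbf{Main obstacle.} I expect this last step to be the hard one: converting the exact-structure argument of Theorem~\ref{thm:main} into an inequality with constant exactly $1$. The first thing I would try is a direct comparison with the scalar case. Take a generic linear projection to $\PP^1$, a meromorphic function $g$ of order zero. For order-zero meromorphic functions, the Edrei–Fuchs type result $\limsup N_1(r,g)/T(r,g)\ge1$ is known. I would then try to bound $N_1(r,g)$ by $N_1(r,f)$ plus $o(T)$ via associated curves and the Plücker-type relations. This comparison may lose constants, in which case I would revert to the rescaling argument above.
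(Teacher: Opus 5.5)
Your case $\beta=0$ is fine. Order zero is finite lower order, so Theorem~\ref{thm:main} applies. There is no circularity, because the paper excludes order zero there through Proposition~\ref{prop:small-order}, which does not use this corollary.

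\textbf{The rescaling route.} The content of the statement is the range $0<\beta<1$, and there you give a plan, not an argument. Its main route cannot see anything at order zero. Suppose $T$ is slowly varying, as for the extremal curve of Proposition~\ref{prop:sharpness-zero}, where $T(r)\sim\frac12(\log r)^2$. Then $\rho^*(T)=0$, so P\'olya peaks of positive order do not exist, contrary to your claim. Take coordinates in which the rescalings are locally bounded above, for instance order-zero entire coordinates $\gv$. Along any sequence, every limit of $T(r_k)^{-1}\log\norm{\gv(r_kz)}$ has circular means $\lim_kT(tr_k)/T(r_k)=1$ for all $t>0$. Hence it has no Riesz mass and is the constant $1$, and nothing of the differential equation survives. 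This is exactly why Step~2 of Proposition~\ref{prop:indices} needs $\mu>0$, and why the paper treats order zero without rescaling.

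Even where peaks do exist, once $N_1$ is not $o(T)$ the hypothesis $\deg P_\nu=o(s_\nu)$ of Proposition~\ref{prop:localcompact} fails. The interior Wronskian zeros persist, the terms $F_{q,\nu}$ in \eqref{eq:F-bound} do not tend to zero, and there is no limiting algebraic equation for $2\partial u$. Your principle that a $\delta$-subharmonic limit of order zero ``needs its negative Riesz mass to produce at least the full characteristic'' is not a property of such limits; nothing there ties the negative part to the positive part. It is essentially the corollary itself.

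\textbf{The projection route.} This fallback rests on $N_1(r,g)\le N_1(r,f)+o(T(r,f))$, which fails in general. The ramification points of $g=\ell_1(\fv)/\ell_0(\fv)$ are the zeros of $W(\ell_0(\fv),\ell_1(\fv))=\langle\fv\wedge\fv',\ell_0\wedge\ell_1\rangle$. These are the points where the tangent line of $f$ meets the centre of projection, and they are counted by the first associated curve, not by $W_{\fv}$. For example, take $f=[e^z:e^{\omega z}:e^{\omega^2z}]$ with $\omega=e^{2\pi i/3}$. Its Wronskian is constant. Yet for generic $\ell_0,\ell_1$ one gets $W(\ell_0(\fv),\ell_1(\fv))=c_1e^{-z}+c_2e^{-\omega z}+c_3e^{-\omega^2z}$ with all $c_k\ne0$, so $N_1(r,g)\sim\frac{3\sqrt3}{2\pi}r\sim T(r,f)$. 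Any such comparison would therefore have to use order zero essentially, which is the whole problem. You would also need $T(r,g)\ge(1-o(1))T(r,f)$ on exactly the radii where the scalar bound is nearly attained.

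\textbf{A smaller error in your reduction.} An entire $h$ with $h^{n+1}=W_{\fv}/P$ exists only if all zero multiplicities are divisible by $n+1$. This is why the paper works with the single-valued coefficients $Q_q$.

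\textbf{The missing idea.} What is needed is a direct, non-asymptotic majorant. Take order-zero entire coordinates normalized by $y_j^{(i)}(0)=\delta_{ij}$. The Karp--Purbhoo bound gives $|y_j(z)|\le\frac{|z|^j}{j!}\prod_\ell(1+|z|/|a_\ell|)$, the product running over the Wronskian zeros (Lemma~\ref{lem:entire-majorant}). Hence $H_{\yv}(r)\le n\log^+r+r\int_0^\infty N_{\yv}(t)(r+t)^{-2}\dd t$. The kernel has total mass one, and this is the source of the sharp constant $1$. Suppose $N_1\le cT$ with $c<1$, and evaluate at radii where $H_{\yv}(t)/t^\alpha$ is maximal, with $\alpha$ so small that $cI_\alpha<1$. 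This gives $(1-cI_\alpha)H_{\yv}(r_\nu)\le n\log r_\nu+C$, so the $y_j$ are polynomials and $f$ is rational (Proposition~\ref{prop:zero-order-ramification}).
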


This estimate is best possible. An explicit example of the form
$\fv=(1,z,\ldots,z^{n-1},g)$ is given in Proposition~\ref{prop:sharpness-zero}.

The proof of the first conclusion of Theorem~\ref{thm:main},
that the order and lower order coincide and are rational,
is completed in Section~\ref{sec:indices}. But the case of zero order
requires a separate argument. A similar difficulty arises in \cite{Ere93},
where the result of D. Shea \cite{She85} was used to treat the zero order case.

Section~\ref{sec:smallorder} treats the zero-order case.
There, Proposition~\ref{prop:zero-order-ramification} proves
Corollary~\ref{cor:zero-ratio}.

The argument of F.~Nevanlinna described above has been generalized to
holomorphic curves by V.~P.~Petrenko \cite{Pet84}. We recall it because it
motivates our proof of Theorem~\ref{thm:main}. Suppose that
$\fv=(f_0,\ldots,f_n)$ is a reduced representation
and $W_{\fv}$ is free of zeros. By expanding the determinant in
\[
 \begin{vmatrix}
 w&f_0&\cdots&f_n\\
 w'&f_0'&\cdots&f_n'\\
 \vdots&\vdots&&\vdots\\
 w^{(n+1)}&f_0^{(n+1)}&\cdots&f_n^{(n+1)}
 \end{vmatrix}=0
\]
along the first column, the coefficient of $w^{(n+1)}$ is
$(-1)^{{n+1}+2}W_{\fv}$. Division by this coefficient gives a monic differential
equation
\begin{equation}\label{oper}
 w^{(n+1)}+a_1w^{(n)}+\cdots+a_nw'+a_{n+1}w=0
\end{equation}
with entire coefficients and fundamental system $f_0,\ldots,f_n$.
For a curve of finite order, one can choose a reduced representation all of
whose coordinates have finite order. Multiplication by
$W_{\fv}^{-1/(n+1)}$ gives another such representation and eliminates the
coefficient of $w^{(n)}$. M.~Frei's theorem \cite{Fre53} then implies that the
remaining coefficients are polynomials. Standard asymptotic integration (see, for example, \cite{Was65})
then gives coincident order and lower order, with a value in
$\mathcal R_{n+1}$ in the transcendental case, and regular growth of $T(r)$.

Our proof extends this differential equation approach to the case
when the differential equation has infinitely many singularities. 
Three features are important:
\begin{enumerate}[label=(\arabic*)]
\item
The poles of the coefficients of the differential equation
must be controlled collectively,
with estimates that do not deteriorate when poles coalesce.

\item
We rescale at P\'olya peaks and study the resulting distributional limits
of logarithms of moduli, following the potential-theoretic approach
developed in
\cite{Ere89a,Ere89b,ES91,ES92,Ere93}. A new ingredient is the
convergence in measure of suitably rescaled logarithmic derivatives,
proved in Lemma~\ref{lem:logderivlimit}.

\item
The algebraic part of the argument relies on universal Pl\"ucker
coordinates recently developed by S.~N.~Karp and K.~Purbhoo \cite{KP26}. We use their results,
together with Purbhoo's fundamental operator formalism
\cite[Proposition~2.2]{Pur23}. These results are closely related
to the Bethe algebra correspondence developed by
E.~Mukhin, V.~Tarasov and A.~Varchenko \cite{MTV13}. They are used in
Proposition~\ref{prop:localcompact} to obtain the uniform estimates needed
near the poles of the coefficients.
\end{enumerate}

\medskip
\noindent\textbf{Organization of the paper.}
Section~\ref{sec:analytic} collects the analytic preliminaries.
Section~\ref{sec:algebra} establishes two estimates for polynomial solution
spaces; Section~\ref{sec:compactness} transfers them to local holomorphic
representations. Section~\ref{sec:indices} proves that
$\rho=\lambda\in\mathcal R_{n+1}\cup\{0\}$, and
Section~\ref{sec:smallorder} excludes order zero under the hypotheses of the
main theorem. Section~\ref{sec:regular} proves regular variation and completes
the proof of Theorem~\ref{thm:main}. Section~\ref{sec:sharpness} gives the
sharpness examples.

\medskip
\noindent\textbf{Acknowledgments and AI tools disclosure.}
We thank Mikhail Sodin for helpful comments.

Teng Zhang is supported by the China Scholarship Council, the Young Elite
Scientists Sponsorship Program for PhD Students (China Association for Science
and Technology), and the Fundamental Research Funds for the Central
Universities at Xi'an Jiaotong University (Grant No.~xzy022024045).

ChatGPT was used for English-language editing, proofreading, and grammatical
correction, and as an exploratory tool for discussing possible approaches to
selected results under the authors' mathematical supervision and guidance.
The authors take full responsibility for all mathematical arguments and for
the accuracy and correctness of the final manuscript.

\section{Analytic preliminaries}\label{sec:analytic}

Put $\kappa=n(n+1)/2$, the sum of the derivative orders in an
$(n+1)\times(n+1)$ Wronskian. We write
$D(a,R)=\{z:|z-a|<R\}$, $D_R=D(0,R)$, and $\mathrm dA$ for planar area
measure. We write $\delta_{ij}$ for the Kronecker delta, equal to $1$
when $i=j$ and to $0$ otherwise. For a holomorphic function $h$ on a neighborhood of $\overline D_r$,
let $M(r,h)=\max_{|z|\le r}|h(z)|$. For a function integrable on $|z|=r$, set
$\mean r u=(2\pi)^{-1}\int_{-\pi}^{\pi}u(re^{i\theta})\,\mathrm d\theta$.
We use $\log^+x=\max\{\log x,0\}$ and
$\log^-x=\max\{-\log x,0\}$ for $x>0$, with
$\log^+0=0$ and $\log^-0=+\infty$.

Convergence in measure is always local and refers to planar area measure.
Values at isolated poles are immaterial. We shall repeatedly use the fact
that a product tends to zero in measure when one factor does so and the
other factors are bounded in measure. All products of limiting derivatives
below are products of measurable functions, not products of distributions.

\subsection{Counting functions and homogeneous coordinates}

For a nonzero entire function $H$, let $n_H(r)$ count its zeros in
$\overline D_r$, with multiplicity, and put
\begin{equation}\label{eq:zero-count-definition}
 N_H(r)=\int_0^r\frac{n_H(t)-n_H(0)}t\,\mathrm dt+n_H(0)\log r.
\end{equation}
In particular, $N_1(r,f)=N_{W_{\fv}}(r)$, with the convention fixed in the
Introduction. If $H(z)=cz^{m_0}+O(z^{m_0+1})$, $c\ne0$, Jensen's formula reads
\begin{equation}\label{eq:zerojensen}
 \mean r{\log|H|}=N_H(r)+\log|c|.
\end{equation}
The definition \eqref{eq:zero-count-definition} also gives
\begin{equation}\label{eq:countbound}
 n_H(r)\log2\le N_H(2r),\qquad r\ge1.
\end{equation}

A fixed unitary change of coordinates preserves $T(r,f)$ and $|W_{\fv}|$.
We make such a change so that $\fv(0)/\norm{\fv(0)}=(n+1)^{-1/2}(1,\ldots,1)$.
In particular, every $f_j(0)$ is nonzero. Write
$\TN(r,h)=m(r,h)+N(r,h)$ for the usual scalar Nevanlinna characteristic,
where $m(r,h)=\mean r{\log^+|h|}$ and $N(r,h)$ counts poles. For coordinate
quotients, Jensen's formula \eqref{eq:zerojensen} and
$N(r,f_\ell/f_j)\le N_{f_j}(r)$ give
\begin{equation}\label{eq:quotientbound}
 \TN\!\left(r,\frac{f_\ell}{f_j}\right)
 \le T(r,f)+\log\norm{\fv(0)}-\log|f_j(0)|.
\end{equation}
Here we used the pointwise inequality
$\log^+|f_\ell/f_j|\le\log\norm{\fv}-\log|f_j|$.

\subsection{The fundamental operator and its normalization}

Let $D=\mathrm d/\mathrm dz$. We first recall the local construction of the
fundamental differential operator, the local counterpart of \eqref{oper}.
For the corresponding construction for spaces of rational functions, see
\cite[Proposition~2.2]{Pur23}.

\begin{lem}\label{lem:fundamental-operator}
Let $\gv=(g_0,\ldots,g_n)$ be holomorphic on a domain $\Omega\subset\C$,
with $W(\gv)\not\equiv0$. There is a unique monic meromorphic differential
operator
\begin{equation}\label{eq:fundamental-operator}
 L_{\gv}=D^{n+1}+\sum_{q=1}^{n+1} b_qD^{n+1-q}
\end{equation}
annihilating every $g_j$. Its coefficients have poles only at zeros of
$W(\gv)$, and $b_1=-W(\gv)'/W(\gv)$.
\end{lem}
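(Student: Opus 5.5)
The plan is to define $L_{\gv}$ by the bordered Wronskian determinant, exactly as in the derivation of \eqref{oper} in the Introduction, and to deduce uniqueness from the dimension of the solution space away from the zeros of $W(\gv)$.

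\emph{Existence.} For a holomorphic function $w$ on $\Omega$, set
\[
 \Delta(w):=\det\begin{pmatrix} w & g_0 & \cdots & g_n\\ w' & g_0' & \cdots & g_n'\\ \vdots&\vdots&&\vdots\\ w^{(n+1)} & g_0^{(n+1)}&\cdots& g_n^{(n+1)}\end{pmatrix},
\]
the Wronskian $W(w,g_0,\ldots,g_n)$. Expanding along the first column gives $\Delta(w)=\sum_{i=0}^{n+1}(-1)^{i}c_i\,w^{(i)}$. Here $c_i$ is the minor obtained by deleting row $i$ and the first column; it is a polynomial in the $g_j^{(k)}$ and hence holomorphic. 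The coefficient of $w^{(n+1)}$ is $\pm W(\gv)$, since deleting the last row leaves the matrix $(g_j^{(k)})_{0\le k\le n}$. The coefficient of $w^{(n)}$ is $\mp\det$ of the matrix with rows of orders $0,\ldots,n-1,n+1$. By the standard differentiation rule for determinants (differentiate row by row; only the last row survives), this determinant equals $W(\gv)'$. Setting $L_{\gv}w:=\pm\Delta(w)/W(\gv)$ with the sign chosen to make it monic gives an operator of the form \eqref{eq:fundamental-operator}. Its coefficients $b_q=\pm c_{n+1-q}/W(\gv)$ are meromorphic on $\Omega$, with poles only at zeros of $W(\gv)$. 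Tracking signs, $b_1=-W(\gv)'/W(\gv)$. This sign bookkeeping is the only step requiring care: both minors carry the same alternating factor relative to one another, which produces the minus sign. Finally, $L_{\gv}g_j=0$ because the bordered matrix then has two equal columns.

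\emph{Uniqueness.} Suppose $L$ and $\widetilde L$ are two monic operators of this form annihilating every $g_j$. Then $L-\widetilde L=\sum_{q\ge1}(b_q-\widetilde b_q)D^{n+1-q}$ has order at most $n$ and annihilates $g_0,\ldots,g_n$. On a disk $U\subset\Omega$ free of zeros of $W(\gv)$ and of poles of the coefficients, assume some coefficient difference is not identically zero. Let $m\le n$ be the largest index with $d:=b_{n+1-m}-\widetilde b_{n+1-m}\not\equiv0$. Shrinking $U$ so that $d$ has no zeros there and dividing by $d$, we obtain a monic linear ODE of order $m\le n$ with holomorphic coefficients. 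Its solution space has dimension $m<n+1$. But it contains the $n+1$ functions $g_j$, which are linearly independent on $U$ because $W(\gv)\ne0$ there. This is a contradiction, so all the differences vanish identically on $U$, and hence on $\Omega$ by the identity theorem for meromorphic functions on a domain.
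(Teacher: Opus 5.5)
Your proof is correct and is essentially the paper's argument. Expanding the bordered Wronskian is Cramer's rule for the linear system in the $b_q$, and identifying the cofactor of $w^{(n)}$ with $W(\gv)'$, with the adjacent cofactor signs $(-1)^{n+1}$ and $(-1)^{n}$ producing the minus sign, is the same row-differentiation fact the paper uses. The only difference is uniqueness: the paper reads it off directly from the pointwise invertibility of that linear system away from the Wronskian zeros, which is shorter than your solution-space dimension count, though both arguments are valid.
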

\begin{proof}
Away from the Wronskian zeros, the equations
$g_j^{(n+1)}+\sum_{q=1}^{n+1} b_qg_j^{(n+1-q)}=0$ form an invertible linear system
for the $b_q$. Cramer's rule proves existence, uniqueness, and the asserted
meromorphic continuation. When the Wronskian is differentiated, all terms
except the one obtained by differentiating its last row vanish. Substitution
of the differential equation in that row gives
$W(\gv)'=-b_1W(\gv)$.
\end{proof}

The coefficient of $D^{n}$ in \eqref{eq:fundamental-operator} can be eliminated by a scalar gauge
transformation. The resulting operator depends on the curve rather than on
its reduced representation.

\begin{lem}\label{lem:canonical-gauge}
Let $\Omega$, $\gv$, and $L_{\gv}$ be as in
Lemma~\ref{lem:fundamental-operator}. Let $\Omega_0$ be a simply connected
subdomain of $\Omega$ containing no zero of $W(\gv)$. Choose a holomorphic
branch $\eta=W(\gv)^{-1/(n+1)}$ on $\Omega_0$, and put
$\widehat{\gv}=\eta\gv$. Then
$W(\widehat{\gv})=1$, and its fundamental operator is
\begin{equation}\label{eq:canonical}
 \widehat L_{\gv}=D^{n+1}+\sum_{q=2}^{n+1} Q_qD^{n+1-q}.
\end{equation}
The coefficients $Q_q$ are single-valued meromorphic functions on $\Omega$,
with poles only at zeros of $W(\gv)$. They are unchanged by a constant
invertible change of basis or by multiplication of $\gv$ by a zero-free
holomorphic function. Under $z\mapsto tz$, $t>0$, their transformation law is
\begin{equation}\label{eq:scalecoeff}
 Q_q(z)\longmapsto t^qQ_q(tz),\qquad 2\le q\le n+1.
\end{equation}
\end{lem}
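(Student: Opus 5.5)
We verify the claims of Lemma~\ref{lem:canonical-gauge} in the order stated, working first on $\Omega_0$ and then continuing to $\Omega$.

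\emph{Normalization.} Multilinearity together with the Leibniz rule gives $W(\eta\gv)=\eta^{n+1}W(\gv)$. Indeed, row $i$ of the Wronskian matrix of $\eta\gv$ equals $\sum_{m\le i}\binom im\eta^{(i-m)}(\text{row } m \text{ of } \gv)$. This is a lower triangular transformation with diagonal entries $\eta$, so its determinant is $\eta^{n+1}$. Since $\eta^{n+1}=W(\gv)^{-1}$, we get $W(\widehat\gv)=1$. By Lemma~\ref{lem:fundamental-operator}, $\widehat\gv$ then has a fundamental operator with $b_1=-W'/W=0$. This gives the form \eqref{eq:canonical} on $\Omega_0$.

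\emph{Single-valuedness and poles.} The key point is that $\widehat L_{\gv}=\eta\circ L_{\gv}\circ\eta^{-1}$. Both sides are monic operators annihilating $\widehat\gv$, so they agree by the uniqueness in Lemma~\ref{lem:fundamental-operator}. Expanding the conjugation shows that each $Q_q$ is a universal polynomial in $b_1,\dots,b_{q}$ and in derivatives of $\eta'/\eta=-\frac1{n+1}W(\gv)'/W(\gv)=\frac{1}{n+1}b_1$. Concretely, $\eta^{-1}D^k\eta=(D+\eta'/\eta)^k$, so conjugation replaces $D$ by $D-\frac{1}{n+1}b_1$. The branch of $\eta$ enters only through $\eta'/\eta$, which is a single-valued meromorphic function on all of $\Omega$ with poles only at zeros of $W(\gv)$. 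Hence the formula for the $Q_q$ defines them globally on $\Omega$, they are independent of the branch and of $\Omega_0$, and their poles lie only at zeros of $W(\gv)$.

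\emph{Invariance.} Suppose $\gv$ is replaced by $\gv C$ with $C$ constant and invertible. Then $W$ is multiplied by $\det C$, so $\eta$ is multiplied by a constant. The span of $\widehat\gv$ is therefore unchanged, and its fundamental operator is the same by uniqueness. Suppose instead $\gv$ is replaced by $h\gv$ with $h$ holomorphic and zero-free. Then $W$ becomes $h^{n+1}W$, and $\eta$ may be taken to be $h^{-1}\eta$ (up to a root-of-unity factor). So $\widehat\gv$ is unchanged up to a constant, and uniqueness again gives the same operator.

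\emph{Scaling.} Put $\gv_t(z)=\gv(tz)$. Then $g_{j,t}^{(k)}(z)=t^kg_j^{(k)}(tz)$. If $\sum_q Q_q(w)g^{(n+1-q)}(w)+g^{(n+1)}(w)=0$ at $w=tz$, multiplying by $t^{n+1}$ gives
\[
g_t^{(n+1)}(z)+\sum_q t^qQ_q(tz)\,g_t^{(n+1-q)}(z)=0 .
\]
This operator is monic, has no $D^n$ term, and annihilates $\gv_t$. We also have $W(\gv_t)=t^{\kappa}W(\gv)(t\,\cdot)$, so the normalizing factor for $\gv_t$ is $\eta(t\cdot)$ times a constant. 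Combined with the invariance statements, this shows that the canonical operator of $\gv_t$ is exactly the one displayed, which proves \eqref{eq:scalecoeff}.

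The only point needing care is the single-valuedness. It is handled cleanly by the conjugation identity, since that identity expresses everything through $\eta'/\eta$. The remaining steps are routine uses of uniqueness.
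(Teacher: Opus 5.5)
Your proof is correct and takes essentially the same route as the paper. You use the lower-triangular Leibniz factorization to get $W(\eta\gv)=\eta^{n+1}W(\gv)$, the conjugation $\widehat L_{\gv}=\eta L_{\gv}\eta^{-1}$ to write the $Q_q$ as differential polynomials in the $b_q$ and $W(\gv)'/W(\gv)$ (hence single-valued and branch-independent), and uniqueness plus the chain rule for the invariance and scaling claims.

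There are two minor slips, neither of which is a gap. First, for $\eta L_{\gv}\eta^{-1}$ the relevant identity is $\eta D^k\eta^{-1}=(D-\eta'/\eta)^k$; this is what your stated conclusion "replace $D$ by $D-\tfrac1{n+1}b_1$" actually uses. Second, in the scaling step the displayed operator annihilates $\widehat\gv(t\cdot)$ rather than $\gv_t$ itself, and your subsequent normalization remark is what closes that point.
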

\begin{proof}
Leibniz's rule expresses the Wronskian matrix of $\eta\gv$ as a lower
triangular matrix, with diagonal entries $\eta$, times the Wronskian matrix
of $\gv$. Thus
\begin{equation}\label{eq:wronskian-gauge}
 W(\eta\gv)=\eta^{n+1}W(\gv).
\end{equation}
Lemma~\ref{lem:fundamental-operator} now gives \eqref{eq:canonical}.
More explicitly, $\widehat L_{\gv}=\eta L_{\gv}\eta^{-1}$; its coefficients
are differential polynomials in the coefficients $b_q$ of $L_{\gv}$
and the logarithmic derivative $W(\gv)'/W(\gv)$. This proves
meromorphic continuation across the Wronskian zeros. Different branches of
$\eta$ differ by a constant $(n+1)$st root of unity and give the same operator.

Formula \eqref{eq:wronskian-gauge} shows that a zero-free scalar change of
representation leaves the normalized solution space unchanged. A constant
basis change multiplies the Wronskian by its determinant and changes the
normalized basis by a constant invertible matrix. Finally,
$D^k(v(tz))=t^kv^{(k)}(tz)$ gives \eqref{eq:scalecoeff}.
\end{proof}

We henceforth reserve $Q_2,\ldots,Q_{n+1}$ for the coefficients of the
original curve $f$ supplied by Lemma~\ref{lem:canonical-gauge}.

\subsection{Logarithmic derivatives at a fixed radius}

The fixed-radius estimate is classical: the first-derivative case is due to
Nevanlinna, and the higher-derivative extension to Hiong \cite{Hio55}.
We quote its precise formulation from Li \cite[Theorem~A]{Li11}.
Unlike a bound with an unspecified constant depending on the function, this
form can be applied uniformly to a sequence of functions.

\begin{lem}[{\cite[Theorem~A]{Li11}}]\label{lem:NH}
For each integer $k\ge1$ there is a constant $C_k$ with the following
property. If $h$ is meromorphic in $D_R$, $0<|h(0)|<\infty$, and
$0<r<\varrho<R\le\infty$, then
\[
 m\!\left(r,\frac{h^{(k)}}h\right)
 \le C_k\left(1+\log^+\TN(\varrho,h)
 +\log^+\log^+\frac1{|h(0)|}+\log^+\varrho
 +\log^+\frac1{\varrho-r}+\log^+\frac1r\right).
\]
\end{lem}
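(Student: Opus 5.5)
This lemma is quoted from Li \cite[Theorem~A]{Li11}, so in the paper it needs no proof. If I were to reprove it, I would follow the classical route of Nevanlinna and Hiong, keeping every constant explicit and independent of $h$. The point of that care is that the bound should depend on $h$ only through $\TN(\varrho,h)$ and $|h(0)|$.

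\emph{Case $k=1$.} I would start from the differentiated Poisson--Jensen formula on the disk $D_\varrho$:
\[
 \frac{h'(z)}{h(z)}=\frac1{2\pi}\int_{-\pi}^{\pi}\log|h(\varrho e^{i\phi})|\,\frac{2\varrho e^{i\phi}}{(\varrho e^{i\phi}-z)^2}\,\mathrm d\phi
 +\sum_{|a_\mu|<\varrho}\Bigl(\frac{1}{z-a_\mu}+\frac{\bar a_\mu}{\varrho^2-\bar a_\mu z}\Bigr)-\sum_{|b_\nu|<\varrho}\Bigl(\frac{1}{z-b_\nu}+\frac{\bar b_\nu}{\varrho^2-\bar b_\nu z}\Bigr).
\]
Here the $a_\mu$ are the zeros and the $b_\nu$ the poles of $h$ in $D_\varrho$. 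I would then bound the three pieces separately.
\begin{itemize}
\item The integral term, for $|z|=r$, is at most $4\varrho(\varrho-r)^{-2}\,\mean{\varrho}{|\log|h||}$.
\item By the first main theorem, $\mean{\varrho}{|\log|h||}\le 2\TN(\varrho,h)+\log^+(1/|h(0)|)$.
\item The second and third terms in each sum have modulus at most $(\varrho-r)^{-1}$. Their number is $n(\varrho,0)+n(\varrho,\infty)$, which I would control by $\TN(\varrho',h)$ at a slightly larger radius via \eqref{eq:countbound}.
\item For the main terms $\sum 1/|z-c|$, I would use concavity of $\log^+$ together with the elementary estimate
\[
 \frac1{2\pi}\int\log^+\frac1{|re^{i\theta}-c|}\,\mathrm d\theta\le\log^+\frac1r+O(1).
\]
This gives $\mean r{\log^+\sum1/|z-c_j|}\le\log^+(\text{number of points})+\log^+(1/r)+O(1)$.
\end{itemize}
Taking $\log^+$ of everything and averaging then yields the claimed form. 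The radius shift is handled by working between the intermediate radius $\varrho_1=(r+\varrho)/2$ and $\varrho$, so that the counting functions at $\varrho_1$ are bounded by $\TN(\varrho,h)$ divided by $\log(\varrho/\varrho_1)$. That loss contributes only $\log^+$ of $\varrho/(\varrho-r)$, which produces the terms $\log^+\varrho$ and $\log^+(\varrho-r)^{-1}$.

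\emph{Case $k\ge2$.} I would write
\[
 \frac{h^{(k)}}{h}=\prod_{j=0}^{k-1}\frac{h^{(j+1)}}{h^{(j)}}
\]
and induct on $k$, applying the $k=1$ case to each $h^{(j)}$. This requires bounding $\TN(\varrho,h^{(j)})$ in terms of $\TN(\varrho',h)$ on a slightly larger disk, via $\TN(r,h')\le 2\TN(r,h)+m(r,h'/h)$. It also requires a lower bound on $|h^{(j)}(0)|$. The latter is the delicate point, since $h^{(j)}(0)$ may vanish or be tiny. I would avoid it by applying the Poisson--Jensen argument directly to $h^{(k)}/h$: differentiating the kernel $k$ times gives terms $(z-c)^{-k}$ and powers of the logarithmic derivative. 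Alternatively, I would use the Gol'dberg--Grinshtein device of working with $\log|h|$ only, which needs $h(0)$ and nothing else.

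\emph{Main obstacle.} The hardest step is keeping every constant independent of $h$ through the nested choice of radii $r<\varrho_1<\dots<\varrho_k<\varrho$. With $k$ equally spaced intermediate radii, each loss is $\log^+$ of $k/(\varrho-r)$, and this stays within the stated form.
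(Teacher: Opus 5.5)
The paper does not prove this lemma; it quotes Li's Theorem~A. Your reconstruction has the right outer structure. That structure consists of one intermediate radius $\varrho_1=(r+\varrho)/2$, the integral term handled via the first main theorem, and the number $n$ of zeros and poles in $D_{\varrho_1}$ bounded by $(2\TN(\varrho,h)+\log^+(1/|h(0)|))/\log(\varrho/\varrho_1)$.

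\textbf{The gap.} The step that carries the whole content of the lemma fails as you state it. That step must turn $F(z)=\sum_j|z-c_j|^{-1}$ into a contribution of size $\log n$ rather than $n$.
\begin{enumerate}
\item Concavity only bounds the logarithm of an average from \emph{below} by the average of logarithms. So per-point estimates of $\mean{r}{\log^+|z-c|^{-1}}$ combine only additively. This gives $\log^+n+n\bigl(\log^+(1/r)+O(1)\bigr)$, which is of size $\TN(\varrho,h)$, not $\log^+\TN(\varrho,h)$.
\item Jensen's inequality for the concave function $\log(1+x)$ would need $\mean{r}{F}$ itself. That mean is infinite as soon as some $c_j$ lies on $|z|=r$.
\item The intermediate inequality you write is false with coefficient one on $\log^+n$. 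Take $n$ equally spaced points on the unit circle and $r=1$. Then $F\ge(n/2\pi)\log(n/2)$ on all of $|z|=1$, so $\mean{1}{\log^+F}\ge\log n+\log\log(n/2)-\log(2\pi)$ for large $n$.
\end{enumerate}

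\textbf{The missing idea: a fractional power.} This is what the Gol'dberg--Grinshtein device you mention actually supplies, and the paper uses the same subadditivity trick in Step~2 of the proof of Lemma~\ref{lem:logderivlimit}. Fix $0<\alpha<1$.
\begin{enumerate}
\item One has $\log^+F\le\alpha^{-1}\log(1+F^\alpha)$ and $F^\alpha\le\sum_j|z-c_j|^{-\alpha}$.
\item The bound $|re^{i\theta}-c|\ge r|\sin(\theta-\arg c)|$ gives $\mean{r}{|z-c|^{-\alpha}}\le C_\alpha r^{-\alpha}$, uniformly in $c$.
\item Jensen then yields $\mean{r}{\log^+F}\le\alpha^{-1}\log(1+C_\alpha nr^{-\alpha})$. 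For $\alpha=1/2$ this is at most $2\log^+n+\log^+(1/r)+O(1)$, which suffices.
\end{enumerate}

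\textbf{The case $k\ge2$.} You must commit to the direct route. As you observe, the induction through $h^{(j+1)}/h^{(j)}$ cannot give a constant independent of $h$, because $|h^{(j)}(0)|$ is not controlled by the data. The direct route goes as follows.
\begin{enumerate}
\item Write $h^{(k)}/h$ as a constant-coefficient polynomial in $L=h'/h,\ldots,L^{(k-1)}$, using the recursion $P_{k+1}=P_k'+LP_k$.
\item Bound $\log^+$ of each monomial by the sum of $\log^+$ of its factors.
\item Estimate each $m(r,L^{(j)})$ from the $j$-fold differentiated Poisson--Jensen formula at the single radius $\varrho_1$.
\end{enumerate}
The kernels there are $(z-c)^{-(j+1)}$, so the exponent must satisfy $\alpha<1/(j+1)$. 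This only changes $C_k$. On this route the nested radii of your last paragraph are unnecessary.
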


Taking $r=1$ and $\varrho=4$ in Lemma~\ref{lem:NH} gives
\begin{equation}\label{eq:logderivbound}
 m\!\left(1,\frac{h^{(k)}}h\right)
 \le C_k\log\bigl(2+\TN(4,h)+|\log|h(0)||\bigr)
\end{equation}
for $h$ meromorphic near $\overline D_4$ and nonzero and finite at the origin.
The change in $C_k$ absorbs only numerical constants.

\subsection{Limits of normalized logarithmic derivatives}

We need a local convergence statement in addition to
\eqref{eq:logderivbound}. Its first-derivative part follows from convergence
of logarithmic potentials and their Cauchy transforms; related arguments
appear in Bergkvist--Rullg{\aa}rd \cite[Section~4]{BR02}. We include the proof
because the higher-derivative statement and its normalization are used
repeatedly.

Set $\partial=\tfrac12(\partial_x-i\partial_y)$ and
$\Delta=\partial_x^2+\partial_y^2$. We write $W^{1,p}_{\loc}$ for the local
Sobolev space of functions whose first distributional derivatives belong
locally to $L^p$.

\begin{lem}\label{lem:logderivlimit}
Let $h_\nu\not\equiv0$ be holomorphic on a domain $\Omega$, and let
$s_\nu\to\infty$. Suppose that
\begin{equation}\label{eq:log-limit-assumption}
 u_\nu=s_\nu^{-1}\log|h_\nu|\longrightarrow u
 \quad\hbox{in }L^1_{\loc}(\Omega),
\end{equation}
where $u$ is subharmonic and not identically $-\infty$. Then
$u\in W^{1,p}_{\loc}(\Omega)$ for $1\le p<2$, and for every integer $k\ge1$,
\begin{equation}\label{eq:logderivlimit}
 \frac{h_\nu^{(k)}}{s_\nu^kh_\nu}
 \convmeas(2\partial u)^k
 \quad\hbox{locally in }\Omega.
\end{equation}
For $k=1$, the convergence also holds in $L^p_{\loc}$ for $1\le p<2$.
\end{lem}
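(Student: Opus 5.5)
Since the statement is local, I will fix a relatively compact subdomain $\Omega'\Subset\Omega$ and work there throughout. The first step is the regularity of the limit $u$. The convergence \eqref{eq:log-limit-assumption} gives $\mu_\nu:=\frac1{2\pi}\Delta u_\nu\to\mu:=\frac1{2\pi}\Delta u$ weakly as measures. Hence the masses $\mu_\nu(\Omega'')$ are uniformly bounded on a slightly larger $\Omega''\Subset\Omega$. On $\Omega'$ I will use the Riesz decomposition
\[
u_\nu=\int_{\Omega''}\log|z-\zeta|\,\mathrm d\mu_\nu(\zeta)+H_\nu,
\]
with $H_\nu$ harmonic. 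The analogous formula holds for $u$, with $H$ harmonic. Because the masses are bounded and the potentials converge in $L^1_{\loc}$, the harmonic parts also satisfy $H_\nu\to H$ in $L^1_{\loc}$, hence locally uniformly together with all derivatives. Differentiating gives
\[
2\partial u_\nu=\int\frac{\mathrm d\mu_\nu(\zeta)}{z-\zeta}+2\partial H_\nu .
\]
The Cauchy kernel $1/(z-\zeta)$ lies in $L^p_{\loc}$ uniformly in $\zeta$ for $p<2$. By Minkowski's integral inequality, the Cauchy transforms of measures of bounded mass are therefore bounded in $L^p(\Omega')$. This proves $u\in W^{1,p}_{\loc}$.

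Next comes the case $k=1$. I will show that $2\partial u_\nu\to2\partial u$ in $L^p_{\loc}$. The tool is the Cauchy transform applied to weakly convergent measures. I split the kernel as $K_\varepsilon+(1/(z-\zeta)-K_\varepsilon)$, where $K_\varepsilon$ is a continuous truncation equal to $1/(z-\zeta)$ for $|z-\zeta|\ge\varepsilon$. The continuous part converges locally uniformly by weak convergence. The remainder has $L^p$ norm $O(\varepsilon^{2/p-1})$ times the mass, uniformly in $\nu$. Since $h_\nu'/h_\nu=2\partial\log|h_\nu|$ off the zero set (a null set), we get $h_\nu'/(s_\nu h_\nu)=2\partial u_\nu$ almost everywhere, and the case $k=1$ follows.

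For higher $k$, I will argue by induction using the Riccati-type recursion. Set $g_\nu=h_\nu'/h_\nu$ and $w_{k,\nu}=h_\nu^{(k)}/h_\nu$. Then
\[
w_{k+1,\nu}=w_{k,\nu}\,g_\nu+w_{k,\nu}'.
\]
Equivalently, $w_{k,\nu}$ is a universal polynomial in $g_\nu,g_\nu',\dots,g_\nu^{(k-1)}$, and its only term of total weight $k$ without derivatives is $g_\nu^k$. Every other term contains some derivative $g_\nu^{(j)}$ with $j\ge1$. After dividing by $s_\nu^k$, such a term is a product of factors $g_\nu^{(j)}/s_\nu^{j+1}$ with total weight strictly smaller. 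The main term $(g_\nu/s_\nu)^k$ tends in measure to $(2\partial u)^k$, because convergence in measure is preserved under continuous maps of finitely many variables. It therefore remains to show that each $g_\nu^{(j)}/s_\nu^{j+1}\to0$ in measure for $j\ge1$, while each $g_\nu/s_\nu$ stays bounded in measure. By the product principle stated in Section~\ref{sec:analytic}, every mixed term then tends to $0$.

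The main obstacle is this derivative estimate. Write
\[
g_\nu^{(j)}=(-1)^j j!\sum_{a}\frac{1}{(z-a)^{j+1}}+H_\nu^{(j)},
\]
where the sum runs over the zeros $a$ of $h_\nu$ in $\Omega''$, counted with multiplicity, and $H_\nu$ is now the holomorphic remainder. The remainder term is harmless: after division by $s_\nu^{j+1}$ it is $O(1/s_\nu)$ locally uniformly. The hard part is the singular sum. There are about $s_\nu$ zeros, of total mass $O(s_\nu)$. I will bound the measure of the set where $s_\nu^{-j-1}\bigl|\sum_a(z-a)^{-j-1}\bigr|>\eta$. A first attempt is to use weak-$L^1$ bounds for $|z-a|^{-j-1}$, but these are not summable linearly. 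Instead, I will use the elementary inclusion
\[
\Bigl\{\,\Bigl|\sum_a (z-a)^{-j-1}\Bigr|>\eta s_\nu^{j+1}\Bigr\}\subset\Bigl\{\sum_a|z-a|^{-(j+1)/(j+1)}>\eta^{1/(j+1)}s_\nu\Bigr\},
\]
which follows from $\sum x_a^{j+1}\le(\sum x_a)^{j+1}$ for nonnegative $x_a$. The function $\sum_a|z-a|^{-1}$ has $L^1(\Omega')$ norm $O(s_\nu)$, so Chebyshev's inequality bounds the measure of this set by $O(\eta^{-1/(j+1)})$. That is only boundedness, not decay. To obtain decay, I will apply the same inclusion with exponent $p<2$ instead of $L^1$. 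The function $\sum_a|z-a|^{-1}$ has $L^p$ norm $O(s_\nu)$, which still gives only boundedness, so a better splitting is needed. I will separate the zeros at distance $<\delta$ from $z$ and those at distance $\ge\delta$. The far zeros contribute at most $O(s_\nu\delta^{-j-1})$, which is $o(s_\nu^{j+1})$ for $j\ge1$. The near zeros contribute only on the $\delta$-neighborhood of the zero set. The measure of that neighborhood is small: for a fixed $\delta$ it is bounded by the measure of the set where $\sum_{|z-a|<\delta}|z-a|^{-1}>c\,s_\nu$. Chebyshev's inequality bounds this by the near-mass times $\delta/s_\nu\cdot s_\nu^{-1}\cdot s_\nu=O(\delta)$, using $\int_{|z-a|<\delta}|z-a|^{-1}\,\mathrm dA=2\pi\delta$. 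Letting $\nu\to\infty$ and then $\delta\to0$ gives the convergence in measure to $0$, and completes the induction.
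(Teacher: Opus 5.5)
Your proposal is correct, and its skeleton is the paper's: Riesz decomposition with locally bounded masses, truncation of the Cauchy kernel for $k=1$, and the expression of $h_\nu^{(k)}/h_\nu$ as a differential polynomial in $g_\nu=h_\nu'/h_\nu$ whose only derivative-free monomial is $g_\nu^k$. That polynomial is isobaric of weight exactly $k$, not smaller. This is what makes each monomial, divided by $s_\nu^k$, a product of factors $g_\nu^{(j)}/s_\nu^{j+1}$.

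The real difference is in proving $g_\nu^{(j)}/s_\nu^{j+1}\convmeas0$ for $j\ge1$. The paper fixes $1/(j+1)<\alpha<\min\{1,2/(j+1)\}$. It then uses subadditivity of $x^\alpha$ and local integrability of $|z-a|^{-(j+1)\alpha}$ to get $\int_K|g_\nu^{(j)}/s_\nu^{j+1}|^\alpha\dA\le C(s_\nu^{1-(j+1)\alpha}+s_\nu^{-j\alpha})$, which tends to zero at an explicit rate. Your inequality $\sum_ax_a^{j+1}\le(\sum_ax_a)^{j+1}$ is exactly the endpoint $\alpha=1/(j+1)$ of that subadditivity, where only boundedness survives, as you noticed. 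You recover smallness by splitting the zeros at distance $\delta$. The far part is deterministically $O(s_\nu\delta^{-j-1})=o(s_\nu^{j+1})$, and the near part is handled by Chebyshev. That works and needs only the integrability of $|z|^{-1}$, at the price of a double limit and no rate.

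One sentence in that step is false and should be removed. The $\delta$-neighborhood of the zero set need not have small measure: if the zeros of $h_\nu$ fill $\Omega'$ at spacing $s_\nu^{-1/2}$, it is all of $\Omega'$ for every fixed $\delta$ and large $\nu$. Nor is that neighborhood contained in $\{\sum_{|z-a|<\delta}|z-a|^{-1}>cs_\nu\}$. What you need, and what your Chebyshev computation actually proves, is the following. Applying your inclusion to the near zeros alone puts $\{|\sum_{|z-a|<\delta}(z-a)^{-j-1}|>\tfrac{\eta}{2}s_\nu^{j+1}\}$ inside $\{\sum_{|z-a|<\delta}|z-a|^{-1}>(\eta/2)^{1/(j+1)}s_\nu\}$. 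Since $\int_{\Omega'}\sum_{|z-a|<\delta}|z-a|^{-1}\dA\le2\pi\delta\cdot O(s_\nu)$, this set has measure $O(\delta\,\eta^{-1/(j+1)})$ uniformly in $\nu$. Now let $\nu\to\infty$, then $\delta\to0$.

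A smaller point concerns the sharp cutoff $\Omega''$. The restricted measures $\mu_\nu|_{\Omega''}$ converge weakly to $\mu|_{\Omega''}$ only if $\mu(\partial\Omega'')=0$. Choose $\Omega''$ accordingly, or use a smooth cutoff $\chi$ as the paper does, before asserting $L^1_{\loc}$ convergence of the potentials.
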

\begin{proof}
\step{Step 1. The first derivative.}
The Riesz measures $\mu_\nu=(2\pi)^{-1}\Delta u_\nu$ are the zero-counting
measures of $h_\nu$, divided by $s_\nu$. Assumption
\eqref{eq:log-limit-assumption} implies their weak convergence on compact
subsets to $\mu=(2\pi)^{-1}\Delta u$. Their masses are consequently bounded
on every compact subset of $\Omega$.

Fix $K\Subset\Omega$, and choose a smooth compactly supported function
$\chi\in C_c^\infty(\Omega)$ with $0\le\chi\le1$ and $\chi=1$ near $K$.
Define $v_\nu(z)=\int\chi(a)\log|z-a|\,\mathrm d\mu_\nu(a)$ and define $v$
by replacing $\mu_\nu$ with $\mu$. The distributions
$H_\nu=u_\nu-v_\nu$ and $H=u-v$ have harmonic representatives on a fixed
neighborhood of $K$.

Truncate the kernel $(z-a)^{-1}$ smoothly within distance $2\delta$ of the
diagonal. The truncated integrals converge uniformly on $K$, by weak
convergence and uniform continuity of the truncated kernel. The removed
part satisfies, for $1\le p<2$,
\begin{equation}\label{eq:kernel-truncation-bound}
 \left\|\int_{|z-a|<2\delta}\frac{\chi(a)}{z-a}
                  \,\mathrm d\mu_\nu(a)\right\|_{L^p(K)}
 \le C_{K,p}\delta^{(2-p)/p}.
\end{equation}
This follows from Minkowski's inequality and the uniform mass bound; the
same estimate holds for $\mu$. Letting $\delta\downarrow0$ in
\eqref{eq:kernel-truncation-bound} proves
$\partial v_\nu\to\partial v$ in $L^p(K)$. Truncation of the logarithmic
kernel likewise gives $v_\nu\to v$ in $L^1_{\loc}$. Hence
$H_\nu\to H$ in $L^1$ near $K$, and interior harmonic estimates give
convergence of all derivatives on smaller neighborhoods. Since
$2\partial u_\nu=h_\nu'/(s_\nu h_\nu)$ almost everywhere, we obtain
\begin{equation}\label{eq:first-logderiv-limit}
 \frac{h_\nu'}{s_\nu h_\nu}\longrightarrow2\partial u
 \quad\hbox{in }L^p_{\loc}(\Omega),\qquad 1\le p<2.
\end{equation}
The logarithmic potential itself belongs locally to every finite $L^p$.
Together with \eqref{eq:first-logderiv-limit}, this proves the Sobolev
assertion.

\step{Step 2. Derivatives of the first logarithmic derivative.}
Put $L_\nu=h_\nu'/h_\nu$. Differentiating
$u_\nu=v_\nu+H_\nu$, with $v_\nu$ and $H_\nu$ constructed in Step~1,
away from the zeros gives, near $K$,
\begin{equation}\label{eq:singular-logderivative}
 L_\nu^{(j)}(z)=(-1)^jj!\sum_a
 \frac{\chi(a)\ord_a(h_\nu)}{(z-a)^{j+1}}+R_{\nu,j}(z),
 \qquad \sup_K|R_{\nu,j}|=O(s_\nu).
\end{equation}
Here $R_{\nu,j}=2s_\nu\partial^{j+1}H_\nu$. The sum is over the
distinct zeros in $\supp\chi$, with multiplicities included through
$\ord_a(h_\nu)$; their total multiplicity is $O(s_\nu)$.
For $j\ge1$, choose $1/(j+1)<\alpha<\min\{1,2/(j+1)\}$. Subadditivity of
$x^\alpha$ and local integrability of $|z-a|^{-(j+1)\alpha}$ give, from
\eqref{eq:singular-logderivative},
\[
 \int_K\left|\frac{L_\nu^{(j)}}{s_\nu^{j+1}}\right|^\alpha\dA
 \le C\bigl(s_\nu^{1-(j+1)\alpha}+s_\nu^{-j\alpha}\bigr)\longrightarrow0.
\]
Thus
\begin{equation}\label{eq:higher-logderiv-measure}
 L_\nu^{(j)}/s_\nu^{j+1}\convmeas0,\qquad j\ge1.
\end{equation}

\step{Step 3. Higher derivatives of $h_\nu$.}
For a nonzero meromorphic function $h$, put $L=h'/h$ and $P_k=h^{(k)}/h$,
with $P_0=1$. The identity $P_{k+1}=P_k'+LP_k$ shows inductively that $P_k$
is a differential polynomial of weight $k$ when $L^{(j)}$ is assigned weight
$j+1$. Its term without derivatives of $L$ is $L^k$, with coefficient one.
Every other monomial contains at least one $L^{(j)}$ with $j\ge1$.
After division by $s_\nu^k$, each monomial is a product of factors
$L_\nu^{(j)}/s_\nu^{j+1}$. Equations \eqref{eq:first-logderiv-limit} and
\eqref{eq:higher-logderiv-measure} show that the leading monomial tends to
$(2\partial u)^k$ and every other monomial tends to zero in measure. This
proves \eqref{eq:logderivlimit}.
\end{proof}

The powers in \eqref{eq:logderivlimit} need not be locally integrable when
$k\ge2$. For example, $u(z)=\log|z|$ gives $2\partial u=1/z$ almost
everywhere, while $1/z^2\notin L^1_{\loc}$. Convergence in measure is the
appropriate conclusion for our limiting differential equations.

\subsection{Wronskians and subharmonic compactness}

The following consequence will provide a lower bound for the limiting
component logarithms.

\begin{lem}\label{lem:sum}
Let $\hv_\nu=(h_{\nu,0},\ldots,h_{\nu,n})$ be holomorphic on a domain
$\Omega$, and let $s_\nu\to\infty$. Suppose that, for every
$j\in\{0,\ldots,n\}$,
$s_\nu^{-1}\log|h_{\nu,j}|\to v_j$ in $L^1_{\loc}(\Omega)$, where the $v_j$ are
subharmonic and not identically $-\infty$, and that
$s_\nu^{-1}\log|W(\hv_\nu)|\convmeas0$. Then
\begin{equation}\label{eq:sum-positive}
 \sum_{j=0}^{n}v_j\ge0\quad\hbox{almost everywhere in }\Omega.
\end{equation}
\end{lem}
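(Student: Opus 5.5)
Factor the Wronskian as
\[
 W(\hv_\nu)=\Bigl(\prod_{j=0}^n h_{\nu,j}\Bigr)\det\Bigl(\frac{h_{\nu,j}^{(i)}}{h_{\nu,j}}\Bigr)_{0\le i,j\le n},
\]
which holds away from the zeros of the $h_{\nu,j}$, i.e.\ almost everywhere. Dividing each entry by $s_\nu^i$ multiplies the determinant by $s_\nu^{-\kappa}$. Taking logarithms and dividing by $s_\nu$ gives, almost everywhere,
\[
 \frac{\log|W(\hv_\nu)|}{s_\nu}=\sum_{j=0}^n u_{\nu,j}+\frac{\kappa\log s_\nu}{s_\nu}
 +\frac1{s_\nu}\log\bigl|\det M_\nu\bigr|,
 \qquad M_\nu=\Bigl(\frac{h_{\nu,j}^{(i)}}{s_\nu^ih_{\nu,j}}\Bigr),
\]
where $u_{\nu,j}=s_\nu^{-1}\log|h_{\nu,j}|$. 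The goal is to show that the last term has nonpositive limit in a suitable sense. Then, passing to the limit, $0\ge$ (limit of $-\sum_j u_{\nu,j}$) plus a nonpositive quantity, which yields $\sum_j v_j\ge0$.

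By Lemma~\ref{lem:logderivlimit}, applied to each $h_{\nu,j}$ with $u=v_j$, every entry of $M_\nu$ converges in measure to $(2\partial v_j)^i$. The determinant is a polynomial in the entries, so, using the product rule for convergence in measure stated in Section~\ref{sec:analytic}, $\det M_\nu$ converges in measure to the Vandermonde determinant $\prod_{j<k}(2\partial v_k-2\partial v_j)$. This limit is an almost-everywhere finite measurable function, since each $\partial v_j\in L^p_{\loc}$. Hence $\det M_\nu$ is bounded in measure, and
\[
 s_\nu^{-1}\log^+|\det M_\nu|\convmeas0 .
\]
This only needs the bound in measure, not any nonvanishing of the Vandermonde determinant, which is fortunate because the $\partial v_j$ might coincide.

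Consequently, almost everywhere,
\[
 \sum_j u_{\nu,j}\ge \frac{\log|W(\hv_\nu)|}{s_\nu}-\frac{\kappa\log s_\nu}{s_\nu}-\frac{\log^+|\det M_\nu|}{s_\nu},
\]
and every term on the right tends to zero in measure. The left side tends to $\sum_j v_j$ in $L^1_{\loc}$, hence in measure. Along a subsequence all these convergences hold almost everywhere, and passing to the limit pointwise gives $\sum_j v_j\ge0$ almost everywhere.

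The main point requiring care is the inequality direction: $\log|\det M_\nu|\le\log^+|\det M_\nu|$ holds even where $\det M_\nu=0$, since the inequality is then trivial. This is why only the upper bound in measure on $\det M_\nu$ is needed. A secondary check is that zeros of the $h_{\nu,j}$ form a null set, so the factorization is valid almost everywhere.
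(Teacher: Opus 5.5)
Your proposal is correct and follows essentially the same route as the paper. The paper also factors the components out of the Wronskian columns, uses Lemma~\ref{lem:logderivlimit} to see that the normalized determinant is bounded in measure, keeps only $s_\nu^{-1}\log^+$ of that determinant in the lower bound, and passes to an almost-everywhere convergent subsequence. Your identification of the limit as the Vandermonde determinant is a harmless strengthening that the argument does not need.
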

\begin{proof}
Factoring each component from its Wronskian column gives
\[
 D_\nu:=\frac{W(\hv_\nu)}{s_\nu^\kappa\prod_j h_{\nu,j}}
 =\det\left(\frac{h_{\nu,j}^{(i)}}{s_\nu^ih_{\nu,j}}\right)_{0\le i,j\le n}
 \quad\hbox{almost everywhere}.
\]
Lemma~\ref{lem:logderivlimit} makes every entry, and hence $D_\nu$, bounded
in measure. Therefore $s_\nu^{-1}\log^+|D_\nu|\convmeas0$. Taking logarithms
in this identity yields
\[
 \sum_j s_\nu^{-1}\log|h_{\nu,j}|
 \ge s_\nu^{-1}\log|W(\hv_\nu)|
       -\kappa s_\nu^{-1}\log s_\nu-s_\nu^{-1}\log^+|D_\nu|.
\]
Pass to a subsequence on which all convergences hold almost everywhere.
The resulting inequality is \eqref{eq:sum-positive}.
\end{proof}

If $U=\max_jv_j$, then \eqref{eq:sum-positive} implies
\begin{equation}\label{eq:sandwich}
 U\ge0,\qquad -nU\le v_j\le U\quad\hbox{almost everywhere}.
\end{equation}
We also use the following standard compactness theorem and its upper-bound
consequence.

\begin{lem}[{\cite[Theorem~4.1.9(a),(b)]{Hor03}}]
\label{lem:subharmonic-compactness}
Let $(u_\nu)$ be a sequence of subharmonic functions on a domain
$\Omega\subset\C$, locally uniformly bounded above. It has a subsequence
that either tends locally uniformly to
$-\infty$ or converges in $L^1_{\loc}(\Omega)$ to a subharmonic function not
identically $-\infty$. If $u_\nu\to u$ in $L^1_{\loc}(\Omega)$ and
$K\Subset V\Subset\Omega$, then
\[
 \limsup_{\nu\to\infty}\sup_Ku_\nu\le\sup_Vu.
\]
\end{lem}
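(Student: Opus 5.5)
Both parts are classical and the citation to \cite[Theorem~4.1.9]{Hor03} suffices; I sketch a self-contained argument. For the dichotomy, fix an exhaustion of $\Omega$ by relatively compact subdomains and a countable dense set of points. After translating by $-\sup$ bounds, I may assume $u_\nu\le0$ on a given compact set.

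\emph{Dichotomy.} I distinguish two cases.
\begin{enumerate}
\item Suppose that on some disc $D(a,\delta)\Subset\Omega$ the averages $\frac1{\pi\delta^2}\int_{D(a,\delta)}u_\nu\dA$ are unbounded below along a subsequence. Harnack's inequality applied to the nonpositive functions $u_\nu-\sup u_\nu$, together with connectedness of $\Omega$ via a chain of overlapping discs, shows that these averages tend to $-\infty$ on every disc. The sub-mean-value inequality then gives $u_\nu\to-\infty$ locally uniformly.
\item Otherwise the averages are bounded on every disc, and the sub-mean-value property yields a bound in $L^1_{\loc}$. The Riesz measures $\mu_\nu=(2\pi)^{-1}\Delta u_\nu$ then have locally bounded mass; one tests against a cutoff and integrates by parts. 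I pass to a subsequence with $\mu_\nu\to\mu$ weakly on compacts and decompose $u_\nu=v_\nu+H_\nu$ locally, exactly as in Step~1 of Lemma~\ref{lem:logderivlimit}. There $v_\nu$ is the logarithmic potential of $\chi\mu_\nu$ and $H_\nu$ is harmonic. Truncation of the logarithmic kernel gives $v_\nu\to v$ in $L^1_{\loc}$. The harmonic parts are bounded in $L^1$, so a further subsequence converges locally uniformly. A diagonal argument over the exhaustion produces $u_\nu\to u$ in $L^1_{\loc}(\Omega)$. The limit $u$ is subharmonic after usc regularization, being a potential plus a harmonic function, and $u\not\equiv-\infty$ since it is an $L^1_{\loc}$ limit.
\end{enumerate}

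\emph{Upper bound.} This part is elementary, and I would prove it directly. Choose $\delta>0$ with $D(z,\delta)\subset V$ for all $z\in K$. For $z\in K$, the sub-mean-value inequality gives
\[
 u_\nu(z)\le\frac1{\pi\delta^2}\int_{D(z,\delta)}u\dA
 +\frac1{\pi\delta^2}\norm{u_\nu-u}_{L^1(V)}
 \le\sup_Vu+\frac1{\pi\delta^2}\norm{u_\nu-u}_{L^1(V)}.
\]
The error term is independent of $z$, so taking $\sup_K$ and then $\limsup_{\nu\to\infty}$ gives the claim.

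The only step needing care is the dichotomy in the first part, specifically propagating unboundedness or boundedness of the disc averages across $\Omega$. I would handle this with Harnack's inequality for the negative harmonic majorants on overlapping discs. Everything else reuses the potential-splitting already carried out in Lemma~\ref{lem:logderivlimit}.
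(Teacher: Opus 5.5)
Your proposal is correct. The paper gives no proof here beyond the citation to Hörmander, and your sketch is essentially the standard argument behind that theorem: the disc-average dichotomy for functions made nonpositive by the local upper bound, then locally bounded Riesz masses, weak compactness and the potential-plus-harmonic splitting, with a direct sub-mean-value proof of the upper bound. One point to tighten: subtract the uniform upper bound $M$ rather than the individual suprema of $u_\nu$, and note that propagation along overlapping discs needs no Harnack inequality, because for $u_\nu\le M$ on $D(b,3\rho)\Subset\Omega$, $z\in D(b,\rho)$ and any measurable $F\subset D(b,\rho)$, the sub-mean-value inequality on $D(z,2\rho)\supset D(b,\rho)$ gives $u_\nu(z)-M\le(4\pi\rho^2)^{-1}\int_F(u_\nu-M)\dA$.
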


For a vector $\hv=(h_0,\ldots,h_n)\in\C^{n+1}\setminus\{0\}$,
we pass between the norm and the component logarithms using
\begin{equation}\label{eq:norm-max}
 0\le\log\norm{\hv}-\max_j\log|h_j|\le\tfrac12\log(n+1).
\end{equation}
In particular, finite $L^1_{\loc}$ limits of all normalized component
logarithms determine the corresponding norm limit.

\section{Estimates for polynomial solution spaces}\label{sec:algebra}

We derive two estimates from the universal Pl\"ucker-coordinate formula of
Karp--Purbhoo \cite{KP26}. Both are uniform when zeros of the Wronskian
coalesce. One controls the differential coefficients; the other controls a
solution in terms of its initial derivatives. The representation-theoretic results are stated in
Lemmas~\ref{lem:KP-correspondence} and~\ref{lem:character-projection};
Lemma~\ref{lem:universal-minors} gives their analytic consequence.

\subsection{Wronskian minors}

Let $V\subset\C[z]$ have dimension $n+1$, and choose a basis
$v_0,\ldots,v_n$. For a partition
$\tau=(\tau_1\ge\cdots\ge\tau_{n+1}\ge0)$, put
$|\tau|=\sum_i\tau_i$ and define
\begin{equation}\label{eq:derivative-minor}
 \Delta^\tau(V;a)=\det\bigl(v_j^{(i_k)}(a)\bigr)_{
                    1\le k\le n+1,\ 0\le j\le n},
 \qquad i_k=k-1+\tau_{n+2-k}.
\end{equation}
Trailing zero parts are omitted when convenient, and $\varnothing$ denotes
the empty partition. Thus $\Delta^\varnothing(V;a)$ is the Wronskian. A
basis change multiplies all these minors by the same nonzero constant, so
their ratios are well defined independently of the basis.

For the translated space $V_a=\{p(a+z):p\in V\}$, the coefficient of
$z^r/r!$ in a basis element is its $r$th derivative at $a$. Accordingly,
\eqref{eq:derivative-minor} is precisely the Pl\"ucker minor of $V_a$ in the
factorial basis $1,z,z^2/2!,\ldots$. The column indices in the convention of
Karp--Purbhoo \cite[equation~(2.5)]{KP26} are
$k+\tau_{n+2-k}$, $1\le k\le n+1$. There is no additional factorial or sign
in passing to our minors.

Let $P(z)=\prod_{\ell=1}^M(z-a_\ell)$ be the monic normalization of the
Wronskian, with roots repeated according to multiplicity. Write
$e_s(x_1,\ldots,x_M)=\sum_{|I|=s}\prod_{\ell\in I}x_\ell$ for the
$s$th elementary symmetric polynomial; the sum is over subsets of
$\{1,\ldots,M\}$. We set $e_0=1$ and $e_s=0$ for $s>M$.

\subsection{The Pl\"ucker-coordinate identities}

We recall the translation formula and the spectral description of the
Wronski map. For partitions $\tau$ and
$\lambda$, write $\tau\subseteq\lambda$ if $\tau_i\leq\lambda_i$ for every
$i$, adding zero parts when necessary. The Young diagram of $\lambda$ has
$\lambda_i$ boxes in row $i$. If $\tau\subseteq\lambda$, let
$f^{\lambda/\tau}$ be the number of fillings of the boxes of the skew diagram
$\lambda/\tau$ by $1,\ldots,|\lambda|-|\tau|$ that increase along rows and
columns. Put $f^\lambda=f^{\lambda/\varnothing}$ and
$f^{\lambda/\lambda}=1$.

Fix an integer $m\geq n+1$, and let $\C_{m-1}[z]$ be the polynomials of
degree at most $m-1$. For a partition
$\omega=(\omega_1,\ldots,\omega_{n+1})$ with
$\omega_1\leq m-n-1$, let $\mathcal X^\omega$ denote the Schubert cell
consisting of the $(n+1)$-dimensional subspaces of $\C_{m-1}[z]$ that have a
basis of respective degrees $j+\omega_{n+1-j}$, $0\leq j\leq n$.
If $V\in\mathcal X^\omega$, then $\Delta^\omega(V;a)$ is a nonzero
constant in $a$, and the Wronskian degree is $M=|\omega|$. Define its
normalized Pl\"ucker coordinates by
\begin{equation}\label{eq:normalized-Plucker-coordinates}
 \widetilde\Delta^\tau(V;a)
 =\frac{M!}{f^\omega}
   \frac{\Delta^\tau(V;a)}{\Delta^\omega(V;a)}.
\end{equation}
Coordinates indexed by partitions not contained in $\omega$ are zero.
For a partition with more than $n+1$ nonzero parts, the coordinate is
understood to be zero. Formula~\eqref{eq:normalized-Plucker-coordinates}
fixes the common scalar by requiring
$\widetilde\Delta^\omega(V;a)=M!/f^\omega$.

The first result describes translation in these coordinates.

\begin{lem}[{\cite[Proposition~2.9]{KP26}}]\label{lem:plucker-translation}
Let $V\in\mathcal X^\omega$, with the normalized coordinates in
\eqref{eq:normalized-Plucker-coordinates}. For every partition $\tau$ and
all $a,t\in\C$,
\begin{equation}\label{eq:Plucker-translation}
 \widetilde\Delta^\tau(V;a+t)
 =\sum_{\lambda\supseteq\tau}
   \frac{f^{\lambda/\tau}}{(|\lambda|-|\tau|)!}
   t^{|\lambda|-|\tau|}\widetilde\Delta^\lambda(V;a).
\end{equation}
Only partitions $\lambda\subseteq\omega$ contribute to the sum.
\end{lem}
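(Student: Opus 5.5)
The plan is to obtain the translation formula by Taylor expanding the minors $\Delta^\tau(V;a+t)$ in $t$ and then identifying the coefficients combinatorially.

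\textbf{Step 1: differentiate a minor in $a$.} The minor $\Delta^\tau(V;a)$ is a determinant whose rows are derivative vectors $(v_j^{(i_k)}(a))_j$. Differentiating in $a$ uses multilinearity: row $k$ is replaced by $(v_j^{(i_k+1)}(a))_j$, one row at a time. The result is zero whenever $i_k+1=i_{k+1}$, because two rows coincide. Otherwise the row indices stay strictly increasing, and the new index set corresponds to $\tau$ with one box added to row $n+2-k$. The increasing condition is exactly the condition that the enlarged shape is again a partition. This gives
\[
 \frac{\mathrm d}{\mathrm da}\Delta^\tau(V;a)=\sum_{\tau\subset\tau^+,\ |\tau^+|=|\tau|+1}\Delta^{\tau^+}(V;a),
\]
the Pieri-type rule for adding a single box. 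The denominator $\Delta^\omega(V;a)$ in \eqref{eq:normalized-Plucker-coordinates} is constant in $a$, so the same rule holds for the normalized coordinates $\widetilde\Delta$.

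\textbf{Step 2: iterate.} Applying Step 1 $s$ times gives
\[
 \partial_a^s\widetilde\Delta^\tau=\sum_{|\lambda|=|\tau|+s}f^{\lambda/\tau}\,\widetilde\Delta^\lambda .
\]
The coefficient is the number of chains $\tau=\lambda^0\subset\lambda^1\subset\cdots\subset\lambda^s=\lambda$ that add one box at each stage. Such chains are in bijection with standard fillings of the skew diagram $\lambda/\tau$, so the coefficient is $f^{\lambda/\tau}$.

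\textbf{Step 3: Taylor expand.} Everything here is polynomial in $a$, so
\[
 \widetilde\Delta^\tau(V;a+t)=\sum_s\frac{t^s}{s!}\,\partial_a^s\widetilde\Delta^\tau(V;a),
\]
and the sum is finite. Substituting Step 2 and setting $s=|\lambda|-|\tau|$ yields \eqref{eq:Plucker-translation}.

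\textbf{Step 4: which partitions contribute.} For $V\in\mathcal X^\omega$, the basis can be taken in echelon form with degrees $j+\omega_{n+1-j}$. If $\lambda\not\subseteq\omega$, some derivative order $i_k$ exceeds the degree of the corresponding basis element. Then the triangular structure forces $\Delta^\lambda\equiv0$, which agrees with the stated convention that coordinates outside $\omega$ vanish. Partitions with more than $n+1$ nonzero parts never arise, since adding boxes is confined to the $n+1$ rows.

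The main obstacle is the bookkeeping in Step 1. One must check carefully that the index convention $i_k=k-1+\tau_{n+2-k}$ turns "raise row index $i_k$ by one" into "add a box to row $n+2-k$ of $\tau$". One must also check that no signs appear: raising an index never reorders the rows, because any case where it would produce a repeated row is zero. With these checks done, the remaining steps are standard.
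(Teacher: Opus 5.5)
Your proof is correct and complete. The paper does not prove this lemma at all. It imports it from Karp--Purbhoo, after checking that its derivative minors $\Delta^\tau(V;a)$ agree, with no extra factorials or signs, with their Pl\"ucker coordinates of the translate $V_a$ in the factorial basis.

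Your argument works directly in the paper's index convention $i_k=k-1+\tau_{n+2-k}$, in three steps:
\begin{enumerate}
\item Multilinearity shows that $\frac{\mathrm d}{\mathrm da}$ acts on the minors as the single-box Pieri operator. The excluded moves are exactly those that create two equal rows, so no signs arise.
\item Iterating gives the chain count $f^{\lambda/\tau}$.
\item A finite Taylor expansion in $a$ yields the formula. The normalizing factor $M!/(f^\omega\Delta^\omega)$ is constant in $a$, so it passes through.
\end{enumerate}
This is essentially the mechanism behind the cited proposition: translation by $t$ is $\exp(tD)$, and $D$ is the shift operator in the factorial basis. What your version buys is self-containment. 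It also makes the convention matching unnecessary for this lemma, although the paper still needs that matching for the eigenvalue identification in Lemma~\ref{lem:KP-correspondence}.

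Two small points are worth making explicit.

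First, in Step~4 the vanishing for $\lambda\not\subseteq\omega$ is a rank count, not merely one entry exceeding a degree. Suppose $\lambda_{n+2-k}>\omega_{n+2-k}$. Then $i_{k'}\ge i_k>\deg v_{k-1}\ge\deg v_j$ for all $k'\ge k$ and $j\le k-1$. So rows $k,\dots,n+1$ vanish in columns $0,\dots,k-1$, and these $n+2-k$ rows are supported in only $n+1-k$ columns. For the same reason, the remark that partitions with more than $n+1$ parts ``never arise'' should read that they do occur in the sum but contribute zero by convention.

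Second, your Steps~1 and~4 together show that $\Delta^\omega(V;a)$ is constant in $a$, since all its one-box enlargements lie outside $\omega$. Its nonvanishing follows because the $\omega$-matrix in an echelon basis is triangular with nonzero diagonal. So you need not take either fact from the paper as given.
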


We next define the group-algebra elements used in the second result.
Let $\mathfrak S_M$ be the symmetric group on $\{1,\ldots,M\}$.
For $I\subseteq\{1,\ldots,M\}$, let $\mathfrak S_I$ be its subgroup
permuting $I$ and fixing its complement. If $|I|=s$ and $|\tau|=s$, let
$\chi^\tau$ be the irreducible character of $\mathfrak S_s$ indexed by
$\tau$, transported to $\mathfrak S_I$ by the increasing ordering of $I$.
The corresponding irreducible representation, or Specht module, has
dimension $f^\tau$. For arbitrary complex parameters $z_1,\ldots,z_M$, put
\begin{equation}\label{eq:KP-operators}
 \alpha_I^\tau=\sum_{\sigma\in\mathfrak S_I}\chi^\tau(\sigma)\sigma,
 \qquad
 \beta^\tau(a)=
 \sum_{\substack{I\subseteq\{1,\ldots,M\}\\ |I|=|\tau|}}
 \alpha_I^\tau\prod_{\ell\notin I}(a+z_\ell)
 \quad\text{in }\C[\mathfrak S_M].
\end{equation}
Thus $\beta^\tau=0$ when $|\tau|>M$, whereas
$\beta^\varnothing(a)=\prod_{\ell=1}^M(a+z_\ell)\,1$.
Here $\C[\mathfrak S_M]$ is the complex group algebra and $1$ its identity.
We use the definition \eqref{eq:KP-operators} from
Karp--Purbhoo \cite[equation~(1.2)]{KP26}. Let
$\mathcal B_M(z_1,\ldots,z_M)$ be the unital subalgebra generated by the
$\beta^\tau(0)$. A common eigenspace of an algebra is a nonzero subspace
on which each element of the algebra acts as a scalar.

The following statements identify this algebra and its eigenvalues.

\begin{lem}[{\cite[Theorem~1.3(i),(ii),(iv),(v) and Corollary~4.15]{KP26}}]
\label{lem:KP-correspondence}
Let $M\geq1$ be an integer and let $z_1,\ldots,z_M\in\C$. Let
$\beta^\tau(a)$ be defined by \eqref{eq:KP-operators}, and let
$\mathcal B_M(z_1,\ldots,z_M)$ be the unital subalgebra of
$\C[\mathfrak S_M]$ generated by all the elements $\beta^\tau(0)$.
\begin{enumerate}
\item The elements $\beta^\tau(a)$ commute for all $a$ and $\tau$.
They belong to $\mathcal B_M(z_1,\ldots,z_M)$, which is the Bethe algebra,
and for every partition $\tau$ and all $a,t\in\C$,
\begin{equation}\label{eq:KP-translation}
 \beta^\tau(a+t)=\sum_{\lambda\supseteq\tau}
 \frac{f^{\lambda/\tau}}{(|\lambda|-|\tau|)!}
 t^{|\lambda|-|\tau|}\beta^\lambda(a).
\end{equation}
The sum is finite because $\beta^\lambda=0$ for $|\lambda|>M$.
\item Let $\omega$ be a partition of $M$ with at most $n+1$ parts, and
choose $m\geq n+1+\omega_1$. Let $\mathsf M^\omega$ be its Specht module,
with the natural action of $\C[\mathfrak S_M]$. For every common eigenspace
$E\subseteq\mathsf M^\omega$ of $\mathcal B_M(z_1,\ldots,z_M)$, the
eigenvalues of $\beta^\tau(0)$ are the normalized coordinates
$\widetilde\Delta^\tau(V;0)$ of a space $V\in\mathcal X^\omega$ whose
monic Wronskian is $\prod_{\ell=1}^M(z+z_\ell)$.
Every space in $\mathcal X^\omega$ with this monic Wronskian is obtained
from some such eigenspace.
\end{enumerate}
\end{lem}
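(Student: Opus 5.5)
Since this lemma is quoted from Karp--Purbhoo, the plan is not to reprove the representation theory. Instead I would check that their statements, after translation into the conventions fixed above, give exactly parts (i) and (ii). There are three conventions to match. First, our minors $\Delta^\tau(V;a)$ are the Plücker coordinates of $V_a$ in the factorial basis, with column indices $k+\tau_{n+2-k}$ as in \cite[equation~(2.5)]{KP26}. Second, the Schubert cell $\mathcal X^\omega$ is indexed by the same partition. Third, the common scalar is fixed by $\widetilde\Delta^\omega=M!/f^\omega$. One sign also needs care: KP write the Wronskian as $\prod(z+z_\ell)$, which is why we write $a+z_\ell$ in \eqref{eq:KP-operators}.

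For the commutativity in part (i) and the identification with the Bethe algebra, I would cite \cite[Theorem~1.3(i),(ii)]{KP26} directly. The translation identity \eqref{eq:KP-translation} can be proved independently, and I would include the short argument. Each $\beta^\tau(a)$ is a polynomial in $a$ of degree at most $M-|\tau|$. Differentiating the product $\prod_{\ell\notin I}(a+z_\ell)$ removes one index $j\notin I$, so
\[
 \frac{\mathrm d}{\mathrm da}\beta^\tau(a)=\sum_{|J|=|\tau|+1}\Bigl(\sum_{j\in J}\alpha^\tau_{J\setminus\{j\}}\Bigr)\prod_{\ell\notin J}(a+z_\ell).
\]
The key identity is then
\[
 \sum_{j\in J}\alpha^\tau_{J\setminus\{j\}}=\sum_{\lambda=\tau+\square}\alpha^\lambda_J .
\]
This is the branching rule: an inner sum over the conjugates of $\mathfrak S_{J\setminus\{j\}}$ in $\mathfrak S_J$ reduces to inducing $\chi^\tau$, and by Frobenius reciprocity the induced character is $\sum_\lambda\chi^\lambda$. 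It follows that $\frac{\mathrm d}{\mathrm da}\beta^\tau=\sum_{\lambda=\tau+\square}\beta^\lambda$. Iterating, the $k$th derivative is $\sum_{|\lambda/\tau|=k}f^{\lambda/\tau}\beta^\lambda$, because each chain of added boxes from $\tau$ to $\lambda$ is a standard filling of $\lambda/\tau$. Taylor's formula then gives \eqref{eq:KP-translation}. The sum is finite because $\beta^\lambda=0$ for $|\lambda|>M$.

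Part (ii) is \cite[Theorem~1.3(iv),(v)]{KP26} together with \cite[Corollary~4.15]{KP26}. The only thing to check is normalization. Both sides satisfy the same translation law: \eqref{eq:KP-translation} on the operator side and Lemma~\ref{lem:plucker-translation} on the Plücker side. It is therefore enough to match eigenvalues at $a=0$ for a single partition, and $\tau=\omega$ is the natural choice. I would show that $\beta^\omega(0)$ acts on $\mathsf M^\omega$ by the scalar $M!/f^\omega$. Indeed, $\beta^\omega(0)=\alpha^\omega_{\{1,\dots,M\}}$, and this is $M!/f^\omega$ times the central idempotent of $\mathsf M^\omega$. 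This agrees with our normalization $\widetilde\Delta^\omega=M!/f^\omega$.

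I expect the main obstacle to be this bookkeeping of conventions, namely indexing by partitions or their conjugates, the ordering of rows, and the sign of $z_\ell$. A mismatch in any of these would silently change which Specht module corresponds to which Schubert cell. I would fix these conventions by testing the case $M=1$, where there is a single partition $\omega=(1)$ and $V$ has Wronskian $z+z_1$.
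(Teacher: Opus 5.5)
Your proposal is correct and takes essentially the paper's approach: the paper simply cites Karp--Purbhoo for both parts. Your self-contained derivation of \eqref{eq:KP-translation} is sound: $\frac{\mathrm d}{\mathrm da}\beta^\tau(a)=\sum_{\lambda=\tau+\square}\beta^\lambda(a)$ holds by the induced-character formula for $\mathrm{Ind}_{\mathfrak S_s}^{\mathfrak S_{s+1}}\chi^\tau$ together with the branching rule. Your normalization check in (ii) is also right, since $\alpha^\omega_{\{1,\ldots,M\}}$ is $M!/f^\omega$ times the central idempotent, which matches $\widetilde\Delta^\omega=M!/f^\omega$ (a monic Wronskian); the appeal to the translation law there is superfluous, because (ii) is stated only at $a=0$, where the projective identification from Karp--Purbhoo plus one coordinate already fixes the scalar.
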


The norm estimate uses the following character projection formula. A
$\tau$-isotypic subspace means the sum of all invariant subspaces isomorphic
to the irreducible representation indexed by $\tau$.

\begin{lem}[{\cite[Proposition~2.16]{KP26}}]\label{lem:character-projection}
Let $I$ be a finite set of size $s$, let $\mathfrak S_I$ be its symmetric
group, and let $\pi$ be a unitary representation of $\mathfrak S_I$ on a
finite-dimensional Hermitian space $\mathcal H$. Let $\tau$ be a partition
of $s$ and let $\chi^\tau$ be the corresponding irreducible character. With
$C^\tau=\sum_{\sigma\in\mathfrak S_I}\chi^\tau(\sigma)\pi(\sigma)$ and
$f^\tau=\chi^\tau(1)$, the operator $(f^\tau/s!)C^\tau$ is the orthogonal
projection onto the $\tau$-isotypic subspace of $\mathcal H$.
\end{lem}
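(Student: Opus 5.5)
The plan is to run the standard central-idempotent argument from the character theory of finite groups, specialized to $\mathfrak S_I$. Two facts about symmetric groups will be used: the characters are real-valued, and each $\sigma$ is conjugate to $\sigma^{-1}$, so $\chi^\tau(\sigma^{-1})=\chi^\tau(\sigma)=\overline{\chi^\tau(\sigma)}$.

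\emph{Step 1: $C^\tau$ is a scalar on every irreducible summand.} Since $\chi^\tau$ is a class function, reindexing the sum by conjugation gives $\pi(g)C^\tau\pi(g)^{-1}=C^\tau$ for every $g\in\mathfrak S_I$. Because $\pi$ is unitary, the orthogonal complement of an invariant subspace is again invariant. Iterating this (Maschke's theorem in its unitary form), we decompose $\mathcal H=\bigoplus_k U_k$ as an orthogonal direct sum of irreducible invariant subspaces. Each $U_k$ is preserved by $C^\tau$, since $C^\tau$ is a linear combination of the $\pi(\sigma)$. The restriction of $C^\tau$ to $U_k$ commutes with the irreducible action, so Schur's lemma makes it a scalar $c_k$.

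\emph{Step 2: compute the scalar.} Suppose $U_k$ is isomorphic to the irreducible representation indexed by $\rho$. Taking traces on $U_k$ gives
\[
 c_kf^\rho=\sum_{\sigma}\chi^\tau(\sigma)\chi^\rho(\sigma)=\sum_\sigma\chi^\tau(\sigma)\overline{\chi^\rho(\sigma)}=s!\,\delta_{\tau\rho}.
\]
The middle equality uses reality of characters, and the last is the first orthogonality relation. Hence $(f^\tau/s!)C^\tau$ is the identity on each $U_k$ with $\rho=\tau$ and zero on every other summand.

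\emph{Step 3: identify the image and the orthogonal complement.} The $\tau$-isotypic subspace equals the sum of those $U_k$ with $\rho=\tau$. Any invariant subspace isomorphic to $\tau$ lies in this sum, since the orthogonal projection onto each $U_k$ with $\rho\neq\tau$ is an intertwiner, hence zero by Schur's lemma. Its orthogonal complement is the sum of the remaining $U_k$. So the operator is idempotent, with image equal to the $\tau$-isotypic subspace and kernel equal to its orthogonal complement; that is, it is the orthogonal projection. Self-adjointness can also be checked directly:
\[
 (C^\tau)^*=\sum_\sigma\overline{\chi^\tau(\sigma)}\,\pi(\sigma^{-1})=\sum_\sigma\chi^\tau(\sigma^{-1})\,\pi(\sigma^{-1})=C^\tau.
\]

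There is no real obstacle here. The only point needing care is the reality and inverse-invariance of $\chi^\tau$. These are what let the formula use $\chi^\tau(\sigma)$ in place of the usual $\overline{\chi^\tau(\sigma)}$ in the central idempotent.
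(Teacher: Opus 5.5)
Your proof is correct. It is the standard central-idempotent argument, and using the reality and inverse-invariance of symmetric-group characters is exactly what justifies writing $\chi^\tau(\sigma)$ rather than $\overline{\chi^\tau(\sigma)}$. The paper gives no proof of its own but quotes the lemma from \cite[Proposition~2.16]{KP26}; it is the classical fact that $(f^\tau/s!)\sum_\sigma\chi^\tau(\sigma)\sigma$ is the central idempotent attached to $\tau$, and you have supplied the standard proof of that fact.
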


\subsection{An estimate for the derivative minors}

Lemmas~\ref{lem:plucker-translation}--\ref{lem:character-projection}
give the following identity and bound for derivative minors.

\begin{lem}\label{lem:universal-minors}
Let $V\subset\C[z]$ have dimension $n+1$, and let
$P(z)=\prod_{\ell=1}^M(z-a_\ell)$ be its monic Wronskian, with roots
repeated according to multiplicity. There are a finite-dimensional
Hermitian space $\mathcal H$, a unitary representation
$\pi\colon\mathfrak S_M\to U(\mathcal H)$, and a unit vector
$\ev\in\mathcal H$, fixed for this $V$, such that for every partition
$\tau$ with at most $n+1$ parts and every $a$ with $P(a)\ne0$,
\begin{equation}\label{eq:universal-minors}
 \frac{\Delta^\tau(V;a)}{\Delta^\varnothing(V;a)}
 =\sum_{\substack{I\subseteq\{1,\ldots,M\}\\|I|=s}}
   \frac{\langle C_I^\tau\ev,\ev\rangle}
        {\prod_{\ell\in I}(a-a_\ell)},
 \qquad s=|\tau|.
\end{equation}
Here $U(\mathcal H)$ denotes the unitary group of $\mathcal H$,
$C_I^\tau=\pi(\alpha_I^\tau)$ with $\alpha_I^\tau$ as in
\eqref{eq:KP-operators}, and the inner product is linear in its first
argument. The coefficients obey
\begin{equation}\label{eq:projectionbound}
 |\langle C_I^\tau\ev,\ev\rangle|\leq s!.
\end{equation}
For $M=0$ we use the trivial group, the empty product is $1$, and a sum
with $s>M$ is $0$.
\end{lem}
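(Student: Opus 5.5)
The plan is to realize the ratios of derivative minors as matrix coefficients of the Bethe algebra acting on a Specht module. We do this by specializing Lemma~\ref{lem:KP-correspondence} to the parameters $z_\ell=-a_\ell$, so that $\prod_{\ell}(z+z_\ell)=P(z)$.

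\emph{Setup.} First dispose of $M=0$. Then the Wronskian is constant, so $V=\C_n[z]$. For $\tau\ne\varnothing$, the last row index in \eqref{eq:derivative-minor} is $i_{n+1}=n+\tau_1>n$, so that row vanishes and $\Delta^\tau=0$. This matches the convention that a sum with $s>M$ is $0$.

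For $M\ge1$, choose a basis of $V$ with distinct degrees. This places $V$ in a Schubert cell $\mathcal X^\omega$ of $\C_{m-1}[z]$ for some partition $\omega$ with at most $n+1$ parts and $m\ge n+1+\omega_1$. Since $\Delta^\omega(V;a)$ is a nonzero constant, $|\omega|$ equals the degree of the Wronskian, namely $M$.

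Let $\mathsf M^\omega$ be the Specht module. Average any Hermitian inner product over $\mathfrak S_M$ to obtain an invariant one, so the action $\pi$ is unitary; put $\mathcal H=\mathsf M^\omega$. By the last assertion of Lemma~\ref{lem:KP-correspondence}(ii), $V$ arises from a common eigenspace $E$ of $\mathcal B_M(-a_1,\ldots,-a_M)$. Let $\ev\in E$ be a unit vector. Then $\pi(\beta^\tau(0))\ev=\widetilde\Delta^\tau(V;0)\ev$ for every $\tau$.

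\emph{Extension to all $a$.} Apply the translation formula \eqref{eq:KP-translation} with base point $0$ and shift $t=a$, and act on $\ev$. Since all $\beta^\lambda(0)$ act on $E$ as scalars, $\beta^\tau(a)$ acts on $\ev$ by
\[
 \sum_{\lambda\supseteq\tau}\frac{f^{\lambda/\tau}}{(|\lambda|-|\tau|)!}\,a^{|\lambda|-|\tau|}\,\widetilde\Delta^\lambda(V;0).
\]
By Lemma~\ref{lem:plucker-translation}, this sum is exactly $\widetilde\Delta^\tau(V;a)$. The two translation formulas have identical coefficients, and partitions not contained in $\omega$ contribute zero on both sides: by the convention on coordinates, and because $\beta^\lambda$ then acts by $0$ on $E$. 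Hence
\[
 \widetilde\Delta^\tau(V;a)=\langle\pi(\beta^\tau(a))\ev,\ev\rangle .
\]
For $\tau=\varnothing$, $\beta^\varnothing(a)=P(a)\,1$, so $\widetilde\Delta^\varnothing(V;a)=P(a)$. Because the normalization \eqref{eq:normalized-Plucker-coordinates} uses a common scalar, the ratio $\Delta^\tau/\Delta^\varnothing$ equals $\widetilde\Delta^\tau/\widetilde\Delta^\varnothing$. For $P(a)\ne0$, expanding $\beta^\tau(a)$ by \eqref{eq:KP-operators} gives
\[
 \frac{\Delta^\tau(V;a)}{\Delta^\varnothing(V;a)}=\frac{1}{P(a)}\sum_{|I|=s}\langle C_I^\tau\ev,\ev\rangle\prod_{\ell\notin I}(a-a_\ell).
\]
Cancelling the factors with $\ell\notin I$ against $P(a)$ yields \eqref{eq:universal-minors}.

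\emph{The bound.} For \eqref{eq:projectionbound}, restrict $\pi$ to the subgroup $\mathfrak S_I$; the restriction is still unitary on $\mathcal H$. Lemma~\ref{lem:character-projection} says $(f^\tau/s!)C_I^\tau$ is an orthogonal projection, so $|\langle C_I^\tau\ev,\ev\rangle|\le s!/f^\tau\le s!$. One point to check is that $\chi^\tau$ in \eqref{eq:KP-operators} is transported to $\mathfrak S_I$ by an order-preserving bijection. This is only a relabeling, so Lemma~\ref{lem:character-projection} applies with $I$ in place of $\{1,\ldots,s\}$.

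The main obstacle is the bookkeeping in identifying the eigenvalues with the specific space $V$ at every point $a$, not just $a=0$. This needs the surjectivity part of Lemma~\ref{lem:KP-correspondence}(ii), the matching of the minor conventions (already discussed before Lemma~\ref{lem:plucker-translation}), and the agreement of the two translation formulas. Everything after that is a direct expansion.
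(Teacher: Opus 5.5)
Your proposal is correct and follows the paper's proof essentially step for step. You specialize the Karp--Purbhoo correspondence at $z_\ell=-a_\ell$, pick a unit vector in the eigenspace attached to $V$ inside an invariantly normed Specht module, transfer the eigenvalue identity from $0$ to $a$ by matching the two translation formulas, divide by $\beta^\varnothing(a)=P(a)$, and bound the coefficients with the character-projection lemma. Your $M=0$ argument also matches the paper, which only adds the explicit degree count $\sum_j\delta_j-\kappa=0$ with $\delta_j\ge j$ to justify $V=\C_n[z]$.
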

\begin{proof}
\step{Step 1. The eigenspace associated with $V$.}
First suppose that $M\geq1$. Choose a basis of $V$ with distinct degrees
$\delta_0<\cdots<\delta_n$, and choose $m>\delta_n$. Then
$V\in\mathcal X^\omega$ for
$\omega=(\delta_n-n,\ldots,\delta_1-1,\delta_0)$, and
$|\omega|=\sum_{j=0}^n\delta_j-\kappa=M$.
Apply Lemma~\ref{lem:KP-correspondence}(ii) with this $\omega$, this $m$,
and $z_\ell=-a_\ell$. Let $E\subseteq\mathsf M^\omega$ be the common
eigenspace supplied by that lemma for $V$. Equip $\mathsf M^\omega$ with
an invariant Hermitian inner product, obtained by averaging any Hermitian
inner product over $\mathfrak S_M$. Write $\mathcal H$ for this Hermitian
space and $\pi$ for its unitary representation, and choose a unit vector
$\ev\in E$.

Lemma~\ref{lem:KP-correspondence}(ii) gives the eigenvalues at the center
$0$. Apply its translation identity \eqref{eq:KP-translation} with initial
center $0$ and increment $a$, and apply \eqref{eq:Plucker-translation}
from Lemma~\ref{lem:plucker-translation} with the same center and increment.
The identical finite sums show that
\begin{equation}\label{eq:KP-eigenvalue-at-center}
 \pi(\beta^\tau(a))\ev
 =\widetilde\Delta^\tau(V;a)\ev
 \qquad(a\in\C).
\end{equation}
The eigenspace, representation, and vector were chosen before $a$ and
$\tau$, and do not depend on them.

\step{Step 2. The ratio of the minors.}
For the chosen parameters, \eqref{eq:KP-operators} gives
$\pi(\beta^\varnothing(a))=P(a)\operatorname{Id}_{\mathcal H}$.
Taking inner products in \eqref{eq:KP-eigenvalue-at-center}, dividing by
$P(a)$, and using \eqref{eq:normalized-Plucker-coordinates}, we obtain
\[
 \frac{\Delta^\tau(V;a)}{\Delta^\varnothing(V;a)}
 =\frac{\langle\pi(\beta^\tau(a))\ev,\ev\rangle}{P(a)}
 =\sum_{|I|=s}
   \frac{\langle C_I^\tau\ev,\ev\rangle}
        {\prod_{\ell\in I}(a-a_\ell)}.
\]
This proves \eqref{eq:universal-minors}, including the case $s>M$.
The arbitrary-parameter hypothesis of Lemma~\ref{lem:KP-correspondence}
already includes repeated roots.

\step{Step 3. The uniform coefficient bound and the constant case.}
Apply Lemma~\ref{lem:character-projection} to the restriction of $\pi$ to
$\mathfrak S_I$ and the partition $\tau$. It gives
$\|C_I^\tau\|\leq s!/f^\tau\leq s!$. Since $\|\ev\|=1$, this proves
\eqref{eq:projectionbound}.

If $M=0$, the degree formula
$0=\sum_{j=0}^n\delta_j-\kappa$ and the inequalities $\delta_j\geq j$
force $\delta_j=j$. Thus $V=\C_n[z]$, the space of all polynomials of
degree at most $n$. Every nonempty minor vanishes, and the assertions
follow with the one-dimensional trivial representation.
\end{proof}

Taking absolute values in \eqref{eq:universal-minors} and using
\eqref{eq:projectionbound} gives the estimate that will be used for initial
values:
\begin{equation}\label{eq:minor-es-bound}
 \left|\frac{\Delta^\tau(V;a)}{\Delta^\varnothing(V;a)}\right|
 \le s!\,e_s\bigl(|a-a_1|^{-1},\ldots,|a-a_M|^{-1}\bigr),
 \qquad s=|\tau|.
\end{equation}

\subsection{Differential coefficients and initial values}

For the coefficients of the fundamental operator, it is important to retain
the product form rather than pass to partial fractions.

\begin{prop}\label{prop:polynomialcoeff}
Let $V\subset\C[z]$ have dimension $n+1$, and let $a_1,\ldots,a_M$ be the
zeros of its Wronskian, with multiplicity. Let
$L_V=D^{n+1}+\sum_{q=1}^{n+1}b_qD^{n+1-q}$ be the fundamental operator
of any basis of $V$, as defined in Lemma~\ref{lem:fundamental-operator}.
Its coefficients have representations
\begin{equation}\label{eq:productformula}
 b_q(z)=(-1)^q\sum_{\substack{I\subset\{1,\ldots,M\}\\|I|=q}}
       \frac{\gamma_{q,I}}{\prod_{\ell\in I}(z-a_\ell)},
 \qquad |\gamma_{q,I}|\le q!,\quad 1\le q\le n+1,
\end{equation}
where the $\gamma_{q,I}$ are independent of $z$.
\end{prop}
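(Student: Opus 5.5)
Express the coefficients $b_q$ through Wronskian minors by Cramer's rule, then apply Lemma~\ref{lem:universal-minors} to each minor. The key observation is that $b_q$ is, up to sign, a single derivative minor divided by the Wronskian. That single term already has the product form in \eqref{eq:productformula}, so no partial fractions are needed.

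\step{Step 1. Cramer's rule.}
Fix a basis $v_0,\ldots,v_n$ of $V$ and a point $z$ with $P(z)\ne0$. Consider the $(n+2)\times(n+1)$ matrix with rows $v_j^{(i)}(z)$, $0\le i\le n+1$. Expand the vanishing $(n+2)\times(n+2)$ determinant obtained by prepending the column $(w,w',\ldots,w^{(n+1)})^T$, as in \eqref{oper}. The coefficient of $w^{(n+1-q)}$ is, up to the sign $(-1)^{q}$, the maximal minor of the basis matrix with row $n+1-q$ deleted. The coefficient of $w^{(n+1)}$ is, up to the same normalization, the Wronskian $\Delta^\varnothing(V;z)$. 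By uniqueness in Lemma~\ref{lem:fundamental-operator},
\[
 b_q(z)=(-1)^q\frac{\Delta^{(1^q)}(V;z)}{\Delta^\varnothing(V;z)},
\]
where $(1^q)=(1,\ldots,1)$ has $q$ parts. Deleting row $n+1-q$ from $\{0,\ldots,n+1\}$ leaves the indices $i_k=k-1$ for $k\le n+1-q$ and $i_k=k$ for $k>n+1-q$. By \eqref{eq:derivative-minor}, this is exactly $i_k=k-1+\tau_{n+2-k}$ with $\tau=(1^q)$, and $q\le n+1$ parts is allowed. I will check the sign carefully: moving the $w$-column past the rows gives $(-1)^{(n+1-q)+0}$ against $(-1)^{n+1}$ for the leading coefficient, so the ratio carries $(-1)^q$. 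This matches the claimed sign and the case $q=1$, where $b_1=-W'/W$ and $\Delta^{(1)}=W'$.

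\step{Step 2. Apply the universal formula.}
By Lemma~\ref{lem:universal-minors} with $\tau=(1^q)$ and $s=q$, for $P(z)\ne0$,
\[
 \frac{\Delta^{(1^q)}(V;z)}{\Delta^\varnothing(V;z)}
 =\sum_{|I|=q}\frac{\langle C_I^{(1^q)}\ev,\ev\rangle}{\prod_{\ell\in I}(z-a_\ell)}.
\]
Set $\gamma_{q,I}=\langle C_I^{(1^q)}\ev,\ev\rangle$. Since $\mathcal H$, $\pi$ and $\ev$ were fixed before $\tau$ and $a$, these numbers do not depend on $z$. The bound $|\gamma_{q,I}|\le q!$ is \eqref{eq:projectionbound}. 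Both sides are rational in $z$, so the identity extends from $\{P\ne0\}$ to an identity of meromorphic functions. If $q>M$ the sum is empty; then $b_q\equiv0$, consistent with the convention $e_s=0$.

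\step{Main obstacle.}
The only real point is Step 1: identifying the Cramer-rule coefficient precisely with the minor $\Delta^{(1^q)}$ in the indexing convention \eqref{eq:derivative-minor}, with the correct sign. The analytic content is then fully supplied by Lemma~\ref{lem:universal-minors}. I will verify the identification on the case $q=1$, using Lemma~\ref{lem:fundamental-operator}, and on $V=\C_n[z]$, where all $b_q$ vanish and $M=0$. Both serve as consistency checks on the row-deletion bookkeeping.
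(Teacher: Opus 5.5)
Your proposal is correct and follows essentially the same route as the paper. Cramer's rule gives $b_q=(-1)^q\Delta^{(1^q)}/\Delta^\varnothing$; your check that deleting row $n+1-q$ produces the index pattern $i_k=k-1+\tau_{n+2-k}$ for $\tau=(1^q)$, with the right sign, is accurate. Lemma~\ref{lem:universal-minors} together with \eqref{eq:projectionbound} then supplies the $z$-independent coefficients $\gamma_{q,I}=\langle C_I^{(1^q)}\ev,\ev\rangle$ with $|\gamma_{q,I}|\le q!$, exactly as in the paper.
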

\begin{proof}
Cramer's rule replaces the Wronskian row of derivative order $n+1-q$ by the
row of order $n+1$. Reordering the rows increasingly identifies the numerator
with the minor indexed by $(1^q)$, the partition consisting of $q$ ones,
and gives
$b_q=(-1)^q\Delta^{(1^q)}/\Delta^\varnothing$.
Choose the representation $\pi$, the unit vector $\ev$, and the operators
$C_I^\tau$ supplied by Lemma~\ref{lem:universal-minors} for this $V$.
Apply \eqref{eq:universal-minors} and \eqref{eq:projectionbound} with
$\tau=(1^q)$ and center $a=z$. These choices are fixed for $V$, so the
resulting coefficients
$\gamma_{q,I}=\langle C_I^{(1^q)}\ev,\ev\rangle$ are constant in $z$.
The empty-sum convention also covers $M=0$.
\end{proof}

The constants in \eqref{eq:productformula} do not depend on the mutual
distances between the roots. A partial-fraction expansion would not retain
this uniformity. The same minor estimate also bounds a normalized basis.

\begin{prop}\label{prop:initial-basis}
Let $V\subset\C[z]$ have dimension $n+1$, let $a_1,\ldots,a_M$ be
its Wronskian zeros counted with multiplicity, and let
$a\in\C\setminus\{a_1,\ldots,a_M\}$. Let
$\psi_0,\ldots,\psi_n$ be the unique basis of $V$ with
$\psi_j^{(i)}(a)=\delta_{ij}$ for $0\leq i,j\leq n$. Then
\begin{equation}\label{eq:basismajorant}
 |\psi_j(a+z)|\le\frac{|z|^j}{j!}
       \prod_{\ell=1}^M\left(1+\frac{|z|}{|a-a_\ell|}\right),
 \qquad z\in\C,\quad 0\le j\le n.
\end{equation}
\end{prop}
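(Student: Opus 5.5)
Since $V$ consists of polynomials, the plan is to expand $\psi_j(a+z)$ in its finite Taylor series at $a$ and bound each Taylor coefficient with the minor estimate \eqref{eq:minor-es-bound}. Writing $\psi_j(a+z)=\sum_{r\ge0}\psi_j^{(r)}(a)z^r/r!$, the normalization gives $\psi_j^{(r)}(a)=\delta_{rj}$ for $r\le n$. It therefore suffices to control $\psi_j^{(r)}(a)$ for $r>n$. The target is
\[
 |\psi_j^{(r)}(a)|\,\frac{|z|^r}{r!}\le \frac{|z|^j}{j!}\,e_{r-j}\bigl(|a-a_1|^{-1},\ldots,|a-a_M|^{-1}\bigr)|z|^{r-j},
\]
because summing over $r$ then produces exactly
$\frac{|z|^j}{j!}\sum_s e_s(\cdots)|z|^s=\frac{|z|^j}{j!}\prod_\ell(1+|z|/|a-a_\ell|)$.

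The first step identifies $\psi_j^{(r)}(a)$ with a ratio of minors. Use the basis $\psi_0,\ldots,\psi_n$ itself in \eqref{eq:derivative-minor}; the ratio is basis-independent. Take the rows of derivative orders $0,\ldots,n$ and replace the row of order $j$ by the row of order $r$. The rows $i\ne j$ are unit vectors $\delta_{ij}$, so the determinant equals $\psi_j^{(r)}(a)$ up to the sign from reordering rows increasingly, while $\Delta^\varnothing(V;a)=\det(\mathrm{Id})=1$. The ordered index set $\{0,\ldots,n\}\setminus\{j\}\cup\{r\}$ corresponds to the hook-type partition
\[
 \tau=(r-n,\,1^{\,n-j}),\qquad |\tau|=r-j,
\]
which has at most $n+1$ parts. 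Thus $|\psi_j^{(r)}(a)|=|\Delta^\tau(V;a)/\Delta^\varnothing(V;a)|$, and \eqref{eq:minor-es-bound} gives
\[
 |\psi_j^{(r)}(a)|\le (r-j)!\,e_{r-j}\bigl(|a-a_\ell|^{-1}\bigr).
\]

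The second step is the factorial comparison. We need $(r-j)!/r!\le 1/(j!\,(r-j)!)$, that is, $(r-j)!^2\,j!\le r!$, and this is false in general. So the crude bound $s!$ from \eqref{eq:projectionbound} loses too much, and this is the main obstacle.

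The fix is the sharper bound already visible in the proof of Lemma~\ref{lem:universal-minors}: $\|C_I^\tau\|\le s!/f^\tau$. This gives
\[
 |\psi_j^{(r)}(a)|\le \frac{s!}{f^\tau}\,e_s,\qquad s=r-j.
\]
The required inequality then becomes $s!\,j!/f^\tau\le r!/s!$. For the hook $\tau=(r-n,1^{n-j})$ we have $f^\tau=\binom{s-1}{n-j}$. The hard part is thus the combinatorial inequality
\[
 \frac{s!\,j!}{\binom{s-1}{n-j}}\le\frac{r!}{s!},
\]
which need not hold for all parameters. Because of this, I would instead avoid minors at the level of individual coefficients and compare with the majorant ODE.

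By Proposition~\ref{prop:polynomialcoeff}, each $\psi_j$ solves $L_Vw=0$ with coefficient bounds
\[
 |b_q(a+z)|\le q!\,e_q\bigl(|a+z-a_\ell|^{-1}\bigr).
\]
Expanding at $a$, the Taylor coefficients of $b_q$ are dominated coefficientwise by those of $q!\,e_q\bigl((|a-a_\ell|-\zeta)^{-1}\bigr)$. A Cauchy majorant argument then shows that $\psi_j$ is dominated by the solution $Y$ of the majorant equation
\[
 Y^{(n+1)}=\sum_q q!\,E_q\,Y^{(n+1-q)}
\]
with $Y^{(i)}(0)=\delta_{ij}$, where $E_q$ denotes the majorant series above. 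The last step is to check that
\[
 \frac{\zeta^j}{j!}\prod_\ell\bigl(1-\zeta/|a-a_\ell|\bigr)^{-1}
\]
is a supersolution, or more precisely that its polynomial analogue $\frac{\zeta^j}{j!}\prod_\ell(1+\zeta/|a-a_\ell|)$ has nonnegative coefficients dominating $Y$, and to verify this coefficientwise. This verification, matching the ODE recursion against the product, is where I expect the real work to lie. I would try it first in the case $M=1$ and then proceed by induction on $M$, multiplying one factor at a time.
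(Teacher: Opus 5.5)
Your first route is correct and is the paper's own proof. The gap is that you abandon it because of an arithmetic slip, and the majorant argument you substitute cannot give \eqref{eq:basismajorant}.

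Put $x_\ell=|a-a_\ell|^{-1}$ and $s=r-j$. Inserting $|\psi_j^{(r)}(a)|\le s!\,e_s(x)$ from \eqref{eq:minor-es-bound} into your target inequality requires only $s!/r!\le 1/j!$. That is, $j!\,(r-j)!\le r!$, or $\binom{r}{j}\ge1$, which always holds. The extra factor $(r-j)!$ on the right of your inequality has no source: the expansion $\prod_\ell(1+|z|x_\ell)=\sum_{s\ge0}e_s(x)|z|^s$ carries no $1/s!$. So the crude constant $s!$ in \eqref{eq:projectionbound} already suffices, and the refinement $s!/f^\tau$ and the hook-length inequality are unnecessary. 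Summing over $r$ then finishes the proof. The terms $r\le n$ contribute only $|z|^j/j!$, by the normalization $\psi_j^{(r)}(a)=\delta_{rj}$. The terms $r\ge n+1$ contribute $\frac{|z|^j}{j!}e_s(x)|z|^s$ with distinct $s\ge1$, and the total is at most $\frac{|z|^j}{j!}\prod_\ell(1+|z|x_\ell)$.

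The replacement you actually propose fails. The singularities of $L_V$ at the $a_\ell$ are apparent, since all solutions are polynomials. The Cauchy majorant equation instead has genuine singularities at $\zeta=|a-a_\ell|$, so it discards exactly the cancellation that the identity \eqref{eq:universal-minors} captures. For instance, take $V=\operatorname{span}(1,z^2)$. Then $L_V=D^2-z^{-1}D$, $M=1$ and $a_1=0$. For $j=1$ your majorant equation is $Y''=(d-\zeta)^{-1}Y'$ with $d=|a|$, and it gives $Y=-d\log(1-\zeta/d)=\sum_{k\ge1}\zeta^k/(k\,d^{k-1})$. All its Taylor coefficients are positive, so no polynomial, in particular not $\zeta(1+\zeta/d)$, dominates $Y$ coefficientwise. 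The supersolution variant with factors $(1-\zeta/|a-a_\ell|)^{-1}$ would give at best a bound on $|z|<\min_\ell|a-a_\ell|$, not the estimate for all $z\in\C$.
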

\begin{proof}
For $k\ge n+1$, replace row $j$ of the identity matrix
$(\psi_j^{(i)}(a))$ by the row of derivatives of order $k$. Its determinant
is $\psi_j^{(k)}(a)$. After reordering, the row indices are
$0,\ldots,j-1,j+1,\ldots,n,k$, so the partition in
\eqref{eq:derivative-minor} is $(k-n,1^{n-j})$, of size $k-j$.
Since the Wronskian at $a$ is one, \eqref{eq:minor-es-bound} gives
\[
 \frac{|\psi_j^{(k)}(a)|}{k!}
 \le\frac{(k-j)!}{k!}\,e_{k-j}(x_1,\ldots,x_M)
 \le\frac1{j!}e_{k-j}(x_1,\ldots,x_M),
 \qquad x_\ell=|a-a_\ell|^{-1}.
\]
Taylor's formula and the prescribed derivatives of orders below $n+1$ imply
\[
 |\psi_j(a+z)|
 \le\frac{|z|^j}{j!}\sum_{s\ge0}e_s(x_1,\ldots,x_M)|z|^s
 =\frac{|z|^j}{j!}\prod_{\ell=1}^M(1+|z|x_\ell).
\]
This is \eqref{eq:basismajorant}.
\end{proof}

For $V$, $a$, and the normalized basis in
Proposition~\ref{prop:initial-basis}, each $h\in V$ satisfies
$h=\sum_{i=0}^{n}h^{(i)}(a)\psi_i$. Thus
\eqref{eq:basismajorant} gives
\begin{equation}\label{eq:initial-value-bound}
 |h(a+z)|\le
 \left(\sum_{i=0}^{n}\frac{|h^{(i)}(a)|\,|z|^i}{i!}\right)e^{|z|B_V(a)},
 \qquad B_V(a)=\sum_{\ell=1}^M|a-a_\ell|^{-1}.
\end{equation}
We shall apply \eqref{eq:initial-value-bound} after approximating local
holomorphic solution spaces by polynomial spaces.

\section{Local compactness}\label{sec:compactness}

Taylor approximation transfers the estimates of Section~\ref{sec:algebra}
to holomorphic solution spaces. The essential distinction is between the
$o(s_\nu)$ Wronskian zeros in a fixed disk and the $O(s_\nu)$ zeros outside
it. The first group has a negligible contribution after normalization; the
second gives a bounded holomorphic contribution.

\subsection{Compactness of the differential coefficients}

\begin{prop}\label{prop:localcompact}
Let $\gv_\nu=(g_{\nu,0},\ldots,g_{\nu,n})$ be holomorphic on $D_{32}$,
and let $s_\nu\to\infty$. Suppose that $W(\gv_\nu)=P_\nu$ is a monic
polynomial and that, for a fixed $C$,
\begin{equation}\label{eq:localhyp}
 \sup_{D_{32}}\log\norm{\gv_\nu}\le Cs_\nu,
 \qquad m_\nu:=\deg P_\nu=o(s_\nu).
\end{equation}
If $L_\nu=D^{n+1}+\sum_{q=1}^{n+1} b_{q,\nu}D^{n+1-q}$ is the fundamental operator,
the sequences $b_{q,\nu}/s_\nu^q$ are simultaneously relatively compact
locally in measure on $D_4$. Their limits are holomorphic, with bounds on
$D_1$ depending only on $n+1$ and $C$.

Let $\widehat b_{q,\nu}$ be the coefficients after multiplication of the
solution vector by $P_\nu^{-1/(n+1)}$, defined locally and continued
meromorphically. Then $\widehat b_{1,\nu}=0$ and
\begin{equation}\label{eq:gaugecomparison}
 \frac{\widehat b_{q,\nu}-b_{q,\nu}}{s_\nu^q}\convmeas0
 \quad\hbox{locally on }D_4,\qquad 1\le q\le n+1.
\end{equation}
\end{prop}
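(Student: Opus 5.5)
The plan is to reduce to Proposition~\ref{prop:polynomialcoeff} by Taylor truncation, and then split the Wronskian zeros of the truncated polynomial space into ``near'' zeros, lying in $D_{16}$, and ``far'' zeros, lying outside it. Fix $K=K(C,n)$ large and put $N_\nu=\lceil Ks_\nu\rceil$. Let $V_\nu$ be the span of the Taylor polynomials of degree $N_\nu$ at $0$ of the $g_{\nu,j}$.

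\textbf{Step 1: approximation.} By Cauchy's estimates on $D_{32}$ and \eqref{eq:localhyp}, the truncation error of each $g_{\nu,j}$ and of its first $n+1$ derivatives on $\overline D_{16}$ is at most $e^{Cs_\nu}2^{-N_\nu}$ times a constant. For $K>C/\log2$ this is $e^{-cs_\nu}$. Consequently $W(V_\nu)$, normalized by the same basis, differs from $P_\nu$ by $O(e^{-cs_\nu})$ on $\overline D_{16}$. The same holds for every Cramer minor appearing in the proof of Proposition~\ref{prop:polynomialcoeff}.

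Since $P_\nu$ is monic of degree $m_\nu=o(s_\nu)$, the bound $|P_\nu|\ge\varepsilon^{m_\nu}$ holds outside a union of disks around its roots of total area $O(\varepsilon^2m_\nu)$. There the ratios defining $b_{q,\nu}$ and the coefficients $b^V_{q,\nu}$ of $L_{V_\nu}$ differ by $O(e^{-cs_\nu}\varepsilon^{-2m_\nu})$, and this tends to $0$. Hence $(b_{q,\nu}-b^V_{q,\nu})/s_\nu^q\convmeas0$ on $D_4$.

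Rouch\'e's theorem, applied on circles of suitable radius about $16$, shows that $W(V_\nu)$ has $m_\nu$ zeros in $D_{16}$, the near zeros. Its total degree is at most $(n+1)N_\nu$, so there are at most $(n+1)N_\nu=O(s_\nu)$ far zeros, with implied constant depending only on $n$ and $C$.

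\textbf{Step 2: splitting the product formula.} Apply \eqref{eq:productformula} to $V_\nu$. Each index set $I$ splits as $I=I'\cup I''$ into near and far roots. The terms with $I'=\varnothing$ sum to a holomorphic function $B_{q,\nu}$ on $D_{12}$. On $D_4$ every far root satisfies $|z-a_\ell|\ge12$, so
\[
 |B_{q,\nu}|\le q!\,e_q(\text{far})\le q!\,\frac{\bigl((n+1)N_\nu/12\bigr)^q}{q!}=O(s_\nu^q).
\]
Therefore $B_{q,\nu}/s_\nu^q$ is a normal family on $D_4$, with bounds on $D_1$ depending only on $n+1$ and $C$.

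The remaining terms are bounded by $q!\sum_{r\ge1}e_r(\text{near})\,e_{q-r}(\text{far})$. Moreover
\[
 e_r(\text{near})\le\frac1{r!}\Bigl(\sum_{\text{near}}|z-a_\ell|^{-1}\Bigr)^r,
\]
and the inner sum has $L^1(D_4)$ norm $O(m_\nu)=o(s_\nu)$. Hence $s_\nu^{-1}\sum_{\text{near}}|z-a_\ell|^{-1}\convmeas0$, and each remaining term divided by $s_\nu^q$ tends to $0$ in measure. This uses the fact that the far factor is uniformly $O(s_\nu^{q-r})$. Combining this with Step~1 gives the simultaneous relative compactness of the sequences $b_{q,\nu}/s_\nu^q$, with holomorphic limits.

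\textbf{Step 3: the gauge.} As in the proof of Lemma~\ref{lem:canonical-gauge}, $\widehat L_\nu=\eta L_\nu\eta^{-1}$ with $\eta=P_\nu^{-1/(n+1)}$. Thus $\widehat b_{q,\nu}-b_{q,\nu}$ is a sum of terms, each of which is a product of some $b_{r,\nu}$ with $r<q$ (or $b_0=1$) and a derivative $(\eta^{-1})^{(j)}\eta$ with $j\ge1$ of total weight $q$. The latter is a differential polynomial in $\Lambda_\nu=P_\nu'/P_\nu$ and its derivatives, and every monomial contains at least one factor $\Lambda_\nu^{(i)}$.

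Since $\Lambda_\nu^{(i)}=(-1)^ii!\sum(z-a)^{-i-1}$ is a sum over only $m_\nu=o(s_\nu)$ roots, the subadditivity argument of Step~2 in the proof of Lemma~\ref{lem:logderivlimit} gives $\Lambda_\nu^{(i)}/s_\nu^{i+1}\convmeas0$ for all $i\ge0$. Here the improvement over that lemma is that even $i=0$ gives limit $0$. Since the $b_{r,\nu}/s_\nu^r$ are bounded in measure by Step~2, each difference term divided by $s_\nu^q$ tends to $0$ in measure, which proves \eqref{eq:gaugecomparison}.

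\textbf{Main obstacle.} The delicate point is Step~1. The approximation must be strong enough that the ratio of minors converges in measure despite the denominator $P_\nu$ being small near its clustered roots. The far-zero count must also be uniform, which is what forces the choice $N_\nu\asymp s_\nu$ with $K$ depending on $C$. I would first try the crude lower bound $|P_\nu|\ge\varepsilon^{m_\nu}$ off an exceptional set of small area. It suffices precisely because $m_\nu=o(s_\nu)$, while the Taylor error decays like $e^{-cs_\nu}$.
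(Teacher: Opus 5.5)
Your route is the paper's: truncation at degree $N_\nu\asymp s_\nu$, comparison of the Cramer quotients off a set of small area, a Rouch\'e circle separating at most $m_\nu$ inner roots from $O(s_\nu)$ outer roots of the truncated Wronskian, a split of \eqref{eq:productformula} into a uniformly bounded holomorphic part plus a part tending to zero in measure, Montel, and conjugation by $P_\nu^{-1/(n+1)}$. For the last step, $\widehat b_{1,\nu}=0$ is immediate from \eqref{eq:wronskian-gauge} and Lemma~\ref{lem:fundamental-operator}. Your Steps~2 and~3 are correct.

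The gap is in the step you yourself flag as delicate. The crude bound $|P_\nu|\ge\varepsilon^{m_\nu}$ off $\bigcup_\ell D(a_\ell,\varepsilon)$ does not suffice. Convergence in measure needs the exceptional area, which you bound only by $\pi\varepsilon^2m_\nu$, to tend to zero. The hypotheses allow $m_\nu\to\infty$ with the roots spread over $D_4$ at spacing $\asymp m_\nu^{-1/2}$; take $n=1$ and $\gv_\nu=(1,\int_0^zP_\nu)$.
\begin{itemize}
\item For fixed $\varepsilon$, the disks then cover essentially all of $D_4$.
\item If instead $\varepsilon_\nu\lesssim m_\nu^{-1/2}$, then $\varepsilon_\nu^{-2m_\nu}\ge e^{m_\nu\log m_\nu}$. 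Since $m_\nu\log m_\nu$ need not be $O(s_\nu)$ (e.g.\ $m_\nu=s_\nu/\log\log s_\nu$), your bound $e^{-cs_\nu}\varepsilon_\nu^{-2m_\nu}$ does not tend to zero for any fixed Taylor exponent $c$.
\end{itemize}
So $m_\nu=o(s_\nu)$ does not make your estimate work.

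The missing idea is integrability of $\log^-|P_\nu|$ with a bound linear in the degree. For monic $P$ of degree $m$, $\int_{D_{12}}\log^-|P|\dA\le\pi m/2$; this is \eqref{eq:polynomial-negative-area}. By Chebyshev, $\{|P_\nu|<e^{-s_\nu}\}\cap D_{12}$ then has area at most $\pi m_\nu/(2s_\nu)\to0$.

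The same issue affects your unspecified ``suitable'' Rouch\'e circle. A pigeonhole choice of radius only gives $|P_\nu|\ge e^{-m_\nu\log(m_\nu+1)}$ on the circle, which need not beat $e^{-cs_\nu}$. Averaging $\sum_\ell\log^-|r-|a_\ell||$ over $r$ in a fixed interval instead yields a radius with $|P_\nu|\ge e^{-2m_\nu}$, and that does beat the Taylor error.

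Two smaller corrections.
\begin{itemize}
\item The roots of $P_\nu$ need not lie in $D_{16}$. Rouch\'e therefore gives at most, not exactly, $m_\nu$ inner roots, which is all you use.
\item In comparing the quotients you dropped two exponential factors: the Cramer numerators of $\gv_\nu$ can be as large as $e^{C_2s_\nu}$, and expanding the determinants multiplies the entrywise Taylor error by $e^{O(s_\nu)}$. Hence $K>C/\log 2$ is not enough. $K$ must make the net error exponent exceed $C_2$ plus the threshold exponent defining the exceptional set, as in the paper's requirement $B>C_2+2$.
\end{itemize}
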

\begin{proof}
\step{Step 1. Polynomial approximation.}
Fix $A>0$, to be chosen sufficiently large below.
By \eqref{eq:localhyp} and Cauchy's coefficient estimate on $|z|=24$,
$|[z^m]g_{\nu,j}|\le e^{Cs_\nu}24^{-m}$, where $[z^m]$ denotes a Taylor
coefficient. For the Taylor polynomial $p_{\nu,j}$ of degree $N$, it follows
that
\[
 \max_{0\le k\le n+1}\sup_{D_{16}}
 |p_{\nu,j}^{(k)}-g_{\nu,j}^{(k)}|
 \le C_n e^{Cs_\nu}(N+1)^{n+1}(2/3)^N.
\]
Choose $L$ with $L\log(3/2)>A+C+1$ and set $N_\nu=\lceil Ls_\nu\rceil$.
For all sufficiently large $\nu$,
\begin{equation}\label{eq:taylorerror}
 \max_{0\le j\le n}\max_{0\le k\le n+1}\sup_{D_{16}}
 |p_{\nu,j}^{(k)}-g_{\nu,j}^{(k)}|\le e^{-As_\nu}.
\end{equation}
Put $\pv_\nu=(p_{\nu,0},\ldots,p_{\nu,n})$ and $R_\nu=W(\pv_\nu)$.
Cauchy's derivative estimates and \eqref{eq:taylorerror} bound all entries
of the relevant derivative matrices on $D_{16}$ by $e^{C_1s_\nu}$, where
$C_1$ depends only on $n+1,C$. Expanding the difference of two determinants
therefore gives
\begin{equation}\label{eq:wronskiapprox}
 \deg R_\nu\le (n+1)N_\nu=O(s_\nu),\qquad
 \sup_{D_{12}}|R_\nu-P_\nu|\le e^{-Bs_\nu}.
\end{equation}
Here $B$ tends to infinity with $A$. If $\mathcal C^p_{q,\nu}$ and
$\mathcal C^g_{q,\nu}$ are the signed Cramer numerators for the two
fundamental operators, the same expansion gives
\begin{equation}\label{eq:cramer-numerator-approx}
 \sup_{D_{12}}|\mathcal C^p_{q,\nu}-\mathcal C^g_{q,\nu}|
 \le e^{-Bs_\nu},\qquad
 \sup_{D_{12}}|\mathcal C^g_{q,\nu}|\le e^{C_2s_\nu},
\end{equation}
after a possible reduction of $B$ by a constant depending only on $n+1,C$.
The constant $C_2$ is independent of $A$. Since $P_\nu$ is monic,
$\sup_{D_{12}}|P_\nu|\ge1$; hence \eqref{eq:wronskiapprox} ensures that
$R_\nu\not\equiv0$.

\step{Step 2. Comparison of the operators.}
For every monic polynomial $P$ of degree $m$,
\begin{equation}\label{eq:polynomial-negative-area}
 \int_{D_{12}}\log^-|P|\dA
 \le m\int_{|z|<1}\log\frac1{|z|}\dA=\frac{\pi m}2.
\end{equation}
Thus the set $E_\nu=\{z\in D_{12}:|P_\nu(z)|<e^{-s_\nu}\}$ satisfies
$\area(E_\nu)\le\pi m_\nu/(2s_\nu)\to0$. On its complement,
\eqref{eq:wronskiapprox} gives $|R_\nu|\ge\tfrac12e^{-s_\nu}$ once $B>1$.
If $b^p_{q,\nu}=\mathcal C^p_{q,\nu}/R_\nu$, Cramer's rule yields
\[
 |b^p_{q,\nu}-b_{q,\nu}|
 \le\frac{|\mathcal C^p_{q,\nu}-\mathcal C^g_{q,\nu}|}{|R_\nu|}
 +\frac{|\mathcal C^g_{q,\nu}|\,|R_\nu-P_\nu|}{|P_\nu R_\nu|}.
\]
By \eqref{eq:wronskiapprox} and \eqref{eq:cramer-numerator-approx}, this is
at most $2e^{-(B-1)s_\nu}+2e^{-(B-C_2-2)s_\nu}$ outside $E_\nu$.
Fix $A$ so that $B>C_2+2$. We have proved
\begin{equation}\label{eq:cramercomparison}
 b^p_{q,\nu}-b_{q,\nu}\convmeas0
 \quad\hbox{locally on }D_{12}.
\end{equation}

\step{Step 3. Separation of the roots.}
Write $P_\nu(z)=\prod_{i=1}^{m_\nu}(z-a_{\nu,i})$. Since
$\int_8^{10}\log^-|r-c|\,\mathrm dr\le2$ for $c\ge0$, there is
$\eta_\nu\in(8,10)$ such that
$\sum_i\log^-|\eta_\nu-|a_{\nu,i}||\le2m_\nu$.
Consequently $|P_\nu(z)|\ge e^{-2m_\nu}$ on $|z|=\eta_\nu$.
Since $m_\nu=o(s_\nu)$, we have
$e^{-Bs_\nu}<e^{-2m_\nu}$ for all sufficiently large $\nu$.
Thus \eqref{eq:wronskiapprox} and the preceding lower bound for
$|P_\nu|$ allow us to apply Rouch\'e's theorem on $|z|=\eta_\nu$,
obtaining
\begin{equation}\label{eq:rouche-root-count}
 n_{R_\nu}(\eta_\nu)=n_{P_\nu}(\eta_\nu)\le m_\nu=o(s_\nu).
\end{equation}
Here the circle contains no zero of either polynomial.

List the roots $\zeta_{\nu,r}$ of $R_\nu$ with multiplicity. Define
$U_{\nu,\mathrm{in}}(z)=\sum_{|\zeta_{\nu,r}|<\eta_\nu}|z-\zeta_{\nu,r}|^{-1}$
and define $U_{\nu,\mathrm{out}}$ by summing over the remaining roots.
The $L^{3/2}(D_6)$ norm of $|z-a|^{-1}$ is bounded uniformly for $a\in\C$.
Thus \eqref{eq:rouche-root-count}, Minkowski's inequality, and the degree
bound in \eqref{eq:wronskiapprox} imply
\begin{equation}\label{eq:rootbounds}
 \left\|U_{\nu,\mathrm{in}}/s_\nu\right\|_{L^{3/2}(D_6)}\longrightarrow0,
 \qquad \sup_{D_6}U_{\nu,\mathrm{out}}/s_\nu\le C_3.
\end{equation}
For the second estimate, every exterior root has distance greater than two
from $D_6$.

\step{Step 4. Holomorphic limits.}
Apply Proposition~\ref{prop:polynomialcoeff}, in the form
\eqref{eq:productformula}, to the span of the components of $\pv_\nu$.
After division by $s_\nu^q$, collect the terms using only exterior roots in
$B_{q,\nu}$, and the remaining terms in $F_{q,\nu}$. Then
\begin{equation}\label{eq:splitcoeff}
 b^p_{q,\nu}/s_\nu^q=B_{q,\nu}+F_{q,\nu},\qquad
 \sup_{D_6}|B_{q,\nu}|\le C_q,
\end{equation}
where $B_{q,\nu}$ is holomorphic on $D_8$. Estimating subsets by ordered
tuples in \eqref{eq:productformula} gives
\begin{equation}\label{eq:F-bound}
 |F_{q,\nu}|\le C_q\frac{U_{\nu,\mathrm{in}}}{s_\nu}
 \left(\frac{U_{\nu,\mathrm{in}}+U_{\nu,\mathrm{out}}}{s_\nu}\right)^{q-1}.
\end{equation}
The first factor on the right tends to zero in measure by
\eqref{eq:rootbounds}, and the parenthesized factor is bounded in measure.
Hence \eqref{eq:F-bound} implies $F_{q,\nu}\convmeas0$.
Montel's theorem applied to \eqref{eq:splitcoeff} gives a common subsequence
on which all $B_{q,\nu}$ converge locally uniformly on $D_6$.
Equation \eqref{eq:cramercomparison} transfers these limits to
$b_{q,\nu}/s_\nu^q$. All constants depend only on $n+1,C$ after the
choice of $A$ in Step~2, so the asserted bounds on $D_1$ follow as well.

\step{Step 5. The canonical gauge.}
Put $\ell_\nu=P_\nu'/((n+1)P_\nu)$. Its derivatives are sums of kernels
$(z-a_{\nu,i})^{-j-1}$. The estimates in Step~2 of the proof of
Lemma~\ref{lem:logderivlimit}, now with $m_\nu=o(s_\nu)$ terms, give
\begin{equation}\label{eq:gaugesmall}
 \ell_\nu^{(j)}/s_\nu^{j+1}\convmeas0,\qquad j\ge0.
\end{equation}
For completeness, the $j=0$ bound is
$\|\ell_\nu/s_\nu\|_{L^{3/2}(K)}\le C_Km_\nu/s_\nu$.
For $j\ge1$, choose $1/(j+1)<\alpha<\min\{1,2/(j+1)\}$; the integral of
$|\ell_\nu^{(j)}/s_\nu^{j+1}|^\alpha$ on $K\Subset D_4$ is at most
$C_{K,j}m_\nu/s_\nu^{(j+1)\alpha}$.

Conjugation by $P_\nu^{-1/(n+1)}$ replaces $D$ by $D+\ell_\nu$ in $L_\nu$.
The coefficient of $D^{n}$ becomes $b_{1,\nu}+(n+1)\ell_\nu=0$.
For each $q$, the difference $\widehat b_{q,\nu}-b_{q,\nu}$ is a finite
sum of monomials of weight $q$, assigning weight $r$ to $b_{r,\nu}$ and
weight $j+1$ to $\ell_\nu^{(j)}$. Each monomial contains at least one
$\ell_\nu^{(j)}$; this follows by expanding with $D\phi=\phi D+\phi'$.
The normalized $b_{r,\nu}$ are bounded in measure by Step~4, and
\eqref{eq:gaugesmall} makes each such monomial, divided by $s_\nu^q$, tend
to zero in measure. This proves \eqref{eq:gaugecomparison}.
\end{proof}

\subsection{A local representation of the rescaled curve}

We next construct a representation satisfying the hypotheses of
Proposition~\ref{prop:localcompact}. It is important that the bounds depend
only on the characteristic bound, not on the particular sequence of radii.

\begin{prop}\label{prop:representation}
Let $\fv=(f_0,\ldots,f_n)$ be a fixed reduced representation of a
linearly non-degenerate curve $f\colon\C\to\PP^n$, with
$f_j(0)\ne0$ for $0\leq j\leq n$.
Let $C>0$ be fixed, and suppose that $t_\nu,s_\nu\to\infty$ satisfy
\begin{equation}\label{eq:scalehyp}
 \log t_\nu=o(s_\nu),\qquad T(256t_\nu,f)\le Cs_\nu,
 \qquad N_{W_{\fv}}(256t_\nu)=o(s_\nu).
\end{equation}
After omitting finitely many terms, there are holomorphic functions
$H_\nu$ on $D_{64}$ such that
$\gv_\nu(z)=e^{-H_\nu(z)}\fv(t_\nu z)$ has a monic polynomial Wronskian
$P_\nu$, all of whose roots lie in $D_{64}$, and
\begin{equation}\label{eq:representationbounds}
 \deg P_\nu=o(s_\nu),\qquad
 \sup_{D_{32}}\log\norm{\gv_\nu}\le(5C+1)s_\nu,\qquad
 s_\nu^{-1}\log|g_{\nu,j}(0)|\longrightarrow0.
\end{equation}
Moreover, for $0<R<64$,
\begin{equation}\label{eq:meanidentity}
 \mean R{\log\norm{\gv_\nu}}=T(Rt_\nu,f)+c_\nu,
 \qquad c_\nu=o(s_\nu),
\end{equation}
where $c_\nu$ is independent of $R$. The sequences
\begin{equation}\label{eq:scaledcompact}
 (t_\nu/s_\nu)^qQ_q(t_\nu z),\qquad 2\le q\le n+1,
\end{equation}
are simultaneously relatively compact locally in measure on $D_4$, with
holomorphic limits bounded on $D_1$ by a constant depending only on $n+1,C$.
\end{prop}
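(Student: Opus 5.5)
The idea is to split the Wronskian of the rescaled representation into its zeros near the origin and its zeros far away. The near zeros become the monic polynomial $P_\nu$; the far zeros, together with the zero-free part, are absorbed into $e^{H_\nu}$. Then Proposition~\ref{prop:localcompact} and Lemma~\ref{lem:canonical-gauge} give the final assertion.

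\step{Step 1: the factorization.}
Put $\fv_\nu(z)=\fv(t_\nu z)$. Its Wronskian is $t_\nu^{\kappa}W_{\fv}(t_\nu z)$, an entire function. Let $a_{\nu,1},\dots,a_{\nu,m_\nu}$ be its zeros in $D_{64}$, counted with multiplicity, and let $P_\nu$ be the monic polynomial with these roots. By \eqref{eq:countbound} applied at radius $128t_\nu$,
\[
 m_\nu=n_{W_{\fv}}(64t_\nu)\le N_{W_{\fv}}(128t_\nu)/\log2=o(s_\nu).
\]
The quotient $t_\nu^\kappa W_{\fv}(t_\nu z)/P_\nu(z)$ is holomorphic and zero-free on $D_{64}$. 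Choose a holomorphic $H_\nu$ on $D_{64}$ with $e^{(n+1)H_\nu}$ equal to this quotient. By \eqref{eq:wronskian-gauge}, $W(e^{-H_\nu}\fv_\nu)=P_\nu$.

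\step{Step 2: bounds on $H_\nu$.}
Set $w_\nu=(n+1)\Ree H_\nu=\log|t_\nu^\kappa W_{\fv}(t_\nu z)|-\log|P_\nu|$. This is harmonic on $D_{64}$, so the bounds can come from mean values and Harnack-type estimates.
\begin{itemize}
\item Upper bound on means. Jensen's formula \eqref{eq:zerojensen} gives
\[
 \mean R{\log|W_{\fv}(t_\nu\cdot)|}=N_{W_{\fv}}(Rt_\nu)+O(1)=o(s_\nu)\quad\text{for }R\le128,
\]
using $W_{\fv}(0)\ne0$, which holds after perturbing the origin if necessary. Also $\kappa\log t_\nu=o(s_\nu)$ by \eqref{eq:scalehyp}.
\item Lower bound on $\log|P_\nu|$. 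By \eqref{eq:polynomial-negative-area}-type bounds its means are $O(m_\nu)$ from below, and since the roots lie in $D_{64}$ they are $\le m_\nu\log 128$ from above.
\end{itemize}
Hence the harmonic function $w_\nu$ has mean $o(s_\nu)$ on every circle $|z|=R<64$.

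For a pointwise bound, write $w_\nu$ via the Poisson integral on a circle of radius $\approx 60$. The positive part of $\log|W_{\fv}(t_\nu\cdot)|$ there is controlled by $\log^+$ of the Wronskian. Since
\[
 \frac{W_{\fv}}{f_0^{n+1}}=W\!\left(1,\frac{f_1}{f_0},\dots,\frac{f_n}{f_0}\right),
\]
the logarithmic derivative lemma, Lemma~\ref{lem:NH} with \eqref{eq:quotientbound}, gives the mean bound
\[
 \mean R{\log^+|W_{\fv}(t_\nu\cdot)|}\le(n+1)\mean R{\log\norm{\fv(t_\nu\cdot)}}+O(\log s_\nu+\log t_\nu)\le (n+1)Cs_\nu+o(s_\nu).
\]
Combined with the small mean of $w_\nu$ and the Poisson kernel bounds on $D_{32}$, this yields $|w_\nu|\le 4(n+1)Cs_\nu$ there. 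I expect this pointwise bound to be the main obstacle: it needs the $\log^+$ bound on circles without an exceptional set. For that I would select good radii in $(60,64)$ on which the estimates hold, using the slack between $64$ and $256$ in \eqref{eq:scalehyp}.

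\step{Step 3: the remaining bounds.}
For the norm, $\log\norm{\gv_\nu}=\log\norm{\fv(t_\nu z)}-\Ree H_\nu$. Subharmonicity together with $T(256t_\nu)\le Cs_\nu$ bounds $\sup_{D_{32}}\log\norm{\fv(t_\nu\cdot)}$ by roughly $Cs_\nu$ via the Poisson–Jensen inequality at radius $64$. Adding the Step~2 bound gives the estimate $(5C+1)s_\nu$ in \eqref{eq:representationbounds}.

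The identity \eqref{eq:meanidentity} follows because $\Ree H_\nu$ is harmonic, so its mean over $|z|=R$ equals $\Ree H_\nu(0)$, independent of $R$. Thus
\[
 c_\nu=\log\norm{\fv(0)}-\Ree H_\nu(0),
\]
and this is $o(s_\nu)$ by Step~2. Since $g_{\nu,j}(0)=e^{-H_\nu(0)}f_j(0)$ with $f_j(0)\ne0$, the same estimate gives $s_\nu^{-1}\log|g_{\nu,j}(0)|\to0$.

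\step{Step 4: compactness of the scaled coefficients.}
The hypotheses of Proposition~\ref{prop:localcompact} now hold, so the gauged coefficients $\widehat b_{q,\nu}/s_\nu^q$ are relatively compact in measure on $D_4$, with holomorphic limits bounded on $D_1$ by a constant depending only on $n+1$ and $C$. By Lemma~\ref{lem:canonical-gauge}, these gauged coefficients are exactly the canonical coefficients of $\fv(t_\nu\cdot)$: they are invariant under multiplication by $e^{-H_\nu}$, and by \eqref{eq:scalecoeff} they equal $t_\nu^qQ_q(t_\nu z)$. This gives the compactness of \eqref{eq:scaledcompact}.
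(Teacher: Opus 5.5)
Your Step~1, the computation $w_\nu(0)=(n+1)\Ree H_\nu(0)=o(s_\nu)$, the mean identity with $c_\nu=\log\norm{\fv(0)}-\Ree H_\nu(0)$, the center values, and Step~4 are correct and agree with the paper. (Perturbing the origin is unnecessary, since \eqref{eq:zerojensen} allows a zero of $W_{\fv}$ at $0$.)

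The gap is the bound $\sup_{D_{32}}\log\norm{\gv_\nu}\le(5C+1)s_\nu$. You derive it by bounding $\sup_{D_{32}}\log\norm{\fv(t_\nu\cdot)}$ and $\sup_{D_{32}}|w_\nu|$ separately, but neither quantity is controlled by the hypotheses. Replace $\fv$ by $e^{G}\fv$ with $G$ entire. Then \eqref{eq:scalehyp} and $f_j(0)\ne0$ are unchanged, and $\gv_\nu$ changes only by a constant root of unity. However, $\log\norm{\fv(t_\nu z)}$ gains $\Ree G(t_\nu z)$ and $w_\nu$ gains $(n+1)\Ree G(t_\nu z)$. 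Choosing $G$ with $\max_{|\zeta|\le32t_\nu}\Ree G(\zeta)/s_\nu\to\infty$ therefore refutes both $|w_\nu|\le4(n+1)Cs_\nu$ and your Poisson--Jensen bound for $\log\norm{\fv(t_\nu\cdot)}$.

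The faulty step is $\mean R{\log^+|W_{\fv}(t_\nu\cdot)|}\le(n+1)\mean R{\log\norm{\fv(t_\nu\cdot)}}+o(s_\nu)$. The pointwise estimate $\log|W_{\fv}|\le(n+1)\log\norm{\fv}+\log^+|\det(h_\ell^{(i)}/h_\ell)|$ only yields $(n+1)\mean R{\log^+\norm{\fv(t_\nu\cdot)}}$ after taking positive parts. But $\mean R{\log^+\norm{\fv(t_\nu\cdot)}}=T(Rt_\nu,f)+\mean R{\log^-\norm{\fv(t_\nu\cdot)}}+O(1)$ can be arbitrarily larger than $T$, and the same defect affects Poisson--Jensen at radius $64$. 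Exceptional sets are not the obstacle; Lemma~\ref{lem:NH} has none.

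The missing idea is to apply your determinant inequality to $\gv_\nu$ itself, whose Wronskian is $P_\nu$, and in the opposite direction, to control the negative part of $\log\norm{\gv_\nu}$. On the set where $|g_{\nu,j}|$ is the largest coordinate,
\[
|P_\nu|\le\norm{\gv_\nu}^{n+1}\,\bigl|\det(h_\ell^{(i)}/h_\ell)_{1\le i\le n,\,\ell\ne j}\bigr|,
\]
with $h_\ell=f_\ell(t_\nu\cdot)/f_j(t_\nu\cdot)$ independent of the representation. Hence $(n+1)\log^-\norm{\gv_\nu}\le\log^-|P_\nu|+\log^+|\det|$. Apply Lemma~\ref{lem:NH} to $h_\ell(48z)$, whose characteristic is at most $T(192t_\nu,f)+O(1)$ by \eqref{eq:quotientbound}; together with $\mean{48}{\log^-|P_\nu|}=O(m_\nu)$ this gives $\mean{48}{\log^-\norm{\gv_\nu}}=o(s_\nu)$. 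Your mean identity then yields $\mean{48}{\log^+\norm{\gv_\nu}}\le Cs_\nu+o(s_\nu)$. Since the Poisson kernel of $D_{48}$ is at most $5$ on $D_{32}$, subharmonicity gives the bound $(5C+1)s_\nu$. Only with this bound in hand can Proposition~\ref{prop:localcompact} be invoked in your Step~4.
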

\begin{proof}
\step{Step 1. The Wronskian and the value at the origin.}
Let $P_\nu$ be the monic polynomial with zeros $a/t_\nu$, where $a$ runs
through the zeros of $W_{\fv}$ in $D_{64t_\nu}$, with multiplicity.
By \eqref{eq:countbound} and \eqref{eq:scalehyp}, its degree
$m_\nu$ is $o(s_\nu)$. The quotient
$t_\nu^\kappa W_{\fv}(t_\nu z)/P_\nu(z)$ is holomorphic and zero-free on
$D_{64}$. Choose $H_\nu$ so that
\begin{equation}\label{eq:localgauge}
 e^{(n+1)H_\nu(z)}=\frac{t_\nu^\kappa W_{\fv}(t_\nu z)}{P_\nu(z)},
 \qquad \gv_\nu(z)=e^{-H_\nu(z)}\fv(t_\nu z).
\end{equation}
Formula \eqref{eq:wronskian-gauge} and the derivative scaling give
$W(\gv_\nu)=P_\nu$.

Write $W_{\fv}(z)=w_0z^{m_0}+O(z^{m_0+1})$, $w_0\ne0$.
Evaluating the zero-free quotient in \eqref{eq:localgauge} at the origin
and using \eqref{eq:zerojensen} gives
\begin{equation}\label{eq:centeridentity}
 (n+1)\Ree H_\nu(0)=\kappa\log t_\nu+N_{W_{\fv}}(64t_\nu)
                  -m_\nu\log64+\log|w_0|=o(s_\nu).
\end{equation}
Since every $f_j(0)$ is fixed and nonzero, \eqref{eq:centeridentity} proves the center-value
assertion in \eqref{eq:representationbounds}. Taking circular means in
\eqref{eq:localgauge} proves \eqref{eq:meanidentity}, with
$c_\nu=\log\norm{\fv(0)}-\Ree H_\nu(0)$.

\step{Step 2. The negative part of the norm.}
Partition $D_{64}$ into measurable sets $E_{\nu,j}$ on which
$|g_{\nu,j}|=\max_\ell|g_{\nu,\ell}|$, resolving ties by the smallest
index. Fix $j$ and put $h_\ell=g_{\nu,\ell}/g_{\nu,j}$; thus
$h_j=1$ and $|h_\ell|\le1$ on $E_{\nu,j}$. Expanding the Wronskian along
the constant column and factoring the other $h_\ell$ gives
\begin{equation}\label{eq:negativedeterminant}
 \frac{|P_\nu|}{\norm{\gv_\nu}^{n+1}}
 \le\left|\det\left(\frac{h_\ell^{(i)}}{h_\ell}\right)_{
                      1\le i\le n,\ \ell\ne j}\right|
 \quad\hbox{on }E_{\nu,j},
\end{equation}
apart from the isolated zeros at which a quotient is undefined.

For $\ell\ne j$, apply \eqref{eq:logderivbound} to
$h_\ell(48z)=f_\ell(48t_\nu z)/f_j(48t_\nu z)$.
Its value at zero is fixed and nonzero. By \eqref{eq:quotientbound}, its
characteristic at radius four is at most $T(192t_\nu,f)+O(1)\le Cs_\nu+O(1)$.
Thus, for $1\le i\le n$,
$m(48,h_\ell^{(i)}/h_\ell)=O(\log s_\nu+1)=o(s_\nu)$.
Expanding the determinant in \eqref{eq:negativedeterminant} proves that the
circular mean of its positive logarithm is $o(s_\nu)$, uniformly in $j$.
Also, integration of the logarithmic singularity on the circle gives
$\sup_{a\in\C}\mean{48}{\log^-|z-a|}<\infty$. Therefore
$\mean{48}{\log^-|P_\nu|}=O(m_\nu)=o(s_\nu)$.
Taking negative logarithms in \eqref{eq:negativedeterminant} and summing the
integrals over the sets $E_{\nu,j}$ yields
\begin{equation}\label{eq:negativepart}
 \mean{48}{\log^-\norm{\gv_\nu}}=o(s_\nu).
\end{equation}

\step{Step 3. The upper bound and the coefficients.}
Equations \eqref{eq:meanidentity} and \eqref{eq:negativepart} imply
$\mean{48}{\log^+\norm{\gv_\nu}}\le Cs_\nu+o(s_\nu)$.
The Poisson kernel of $D_{48}$ is bounded by five on $D_{32}$, so
subharmonicity gives the upper bound in \eqref{eq:representationbounds}.
Proposition~\ref{prop:localcompact} now applies to $\gv_\nu$.
Locally away from the Wronskian zeros, \eqref{eq:localgauge} gives, up to a
constant $(n+1)$st root of unity,
$P_\nu^{-1/(n+1)}\gv_\nu=t_\nu^{-\kappa/(n+1)}
(W_{\fv}^{-1/(n+1)}\fv)(t_\nu z)$.
The constant factor does not affect the operator. Hence the normalized
coefficients are
\begin{equation}\label{eq:canonical-coefficient-identification}
 \widehat b_{q,\nu}(z)=t_\nu^qQ_q(t_\nu z),\qquad 2\le q\le n+1,
\end{equation}
by \eqref{eq:scalecoeff}. Their compactness and bounds are precisely the
assertions about \eqref{eq:scaledcompact}.
\end{proof}

\subsection{Polynomial approximants used later}

The proof of regular variation also needs the polynomial solution spaces,
not just the limits of their differential coefficients. We state the
relevant estimates explicitly.

\begin{lem}\label{lem:replacement}
Let $\fv,t_\nu,s_\nu$ satisfy the hypotheses of
Proposition~\ref{prop:representation}. Let $H_\nu,\gv_\nu,P_\nu$ be the
functions, vectors, and monic polynomials constructed in Step~1 of its
proof, so that $\gv_\nu=e^{-H_\nu}\fv(t_\nu\cdot)$ and
$W(\gv_\nu)=P_\nu$. Fix a sufficiently large $A>0$.
Apply the Taylor-polynomial construction in Step~1 of the proof of
Proposition~\ref{prop:localcompact} to these vectors $\gv_\nu$, choosing
the truncation constant as in Step~2 of that proof. Denote the resulting
vectors by $\pv_\nu=(p_{\nu,0},\ldots,p_{\nu,n})$, and put
$R_\nu=W(\pv_\nu)$. Let $\eta_\nu\in(8,10)$ be the separating radii
chosen for these $P_\nu,R_\nu$ in Step~3 of the same proof.

The vectors $\pv_\nu$ have degree $O(s_\nu)$ and satisfy
\eqref{eq:taylorerror}. Their Wronskians satisfy
\begin{equation}\label{eq:replacement-data}
 \deg R_\nu=O(s_\nu),\qquad n_{R_\nu}(\eta_\nu)=o(s_\nu),\qquad
 s_\nu^{-1}\log|R_\nu|\longrightarrow0\text{ in }L^1_{\loc}(D_4).
\end{equation}
Let $U_{\nu,\mathrm{in}}(z)$ and $U_{\nu,\mathrm{out}}(z)$ be the sums
of $|z-\zeta|^{-1}$ over the zeros $\zeta$ of $R_\nu$ with
$|\zeta|<\eta_\nu$ and $|\zeta|>\eta_\nu$, respectively, counting
multiplicities. These are the functions constructed in Step~3 of the
proof of Proposition~\ref{prop:localcompact}, and they obey
\eqref{eq:rootbounds}. If
$L_{\pv_\nu}=D^{n+1}+\sum_{q=1}^{n+1}b^p_{q,\nu}D^{n+1-q}$, then
\begin{equation}\label{eq:polynomial-coefficients}
 \frac{b^p_{1,\nu}}{s_\nu}\convmeas0,\qquad
 \frac{b^p_{q,\nu}}{s_\nu^q}-(t_\nu/s_\nu)^qQ_q(t_\nu z)\convmeas0,
 \quad 2\le q\le n+1,
\end{equation}
locally on $D_4$.
\end{lem}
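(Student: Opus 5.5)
This lemma mostly collects facts already established inside the proofs of Propositions~\ref{prop:localcompact} and~\ref{prop:representation}, so the plan is to re-read those steps with the specific vectors $\gv_\nu$ and record what they give. By Proposition~\ref{prop:representation}, the vectors $\gv_\nu$ satisfy \eqref{eq:localhyp} with constant $5C+1$ and monic Wronskian $P_\nu$ of degree $o(s_\nu)$. Hence Step~1 of Proposition~\ref{prop:localcompact} applies with $N_\nu=\lceil Ls_\nu\rceil$, which gives $\deg p_{\nu,j}=O(s_\nu)$ and \eqref{eq:taylorerror}, and \eqref{eq:wronskiapprox} gives $\deg R_\nu=O(s_\nu)$. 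Step~3 gives $n_{R_\nu}(\eta_\nu)\le m_\nu=o(s_\nu)$ by Rouch\'e's theorem, and also \eqref{eq:rootbounds}.

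For the $L^1_{\loc}$ assertion in \eqref{eq:replacement-data}, I would treat the positive and negative parts of $\log|R_\nu|$ separately. For the positive part, $\sup_{D_{12}}|R_\nu|\le\sup|P_\nu|+1$, and $\log^+|P_\nu|\le m_\nu\log 76=o(s_\nu)$ on $D_{12}$ because all roots lie in $D_{64}$. For the negative part, use a Jensen/Riesz argument. Write $\log|R_\nu|=\sum_{|\zeta|<\eta_\nu}\log|z-\zeta|+\log|G_\nu|$, where $G_\nu$ is zero-free on $D_{\eta_\nu}$. The interior sum has $o(s_\nu)$ terms, each with uniformly bounded $L^1(D_4)$ norm, so it is $o(s_\nu)$ in $L^1(D_4)$. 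The function $\log|G_\nu|$ is harmonic on $D_{\eta_\nu}\supset D_8$, and its value at $0$ is controlled as follows: $\log|R_\nu(0)|\ge\log(|P_\nu(0)|-e^{-Bs_\nu})$, and $\log|P_\nu(0)|=o(s_\nu)$ by \eqref{eq:centeridentity}-type reasoning, or more simply because $|P_\nu|\ge e^{-2m_\nu}$ on a circle and the root count is small. I expect a cleaner route to be via Harnack's inequality. Since $\log|G_\nu|\le o(s_\nu)+\sum_{\mathrm{in}}\log^-$ on $|z|=\eta_\nu$, where $|P_\nu|\ge e^{-2m_\nu}$ makes $|R_\nu|$ comparable, the function $o(s_\nu)-\log|G_\nu|$ is nonnegative harmonic on $D_{\eta_\nu}$. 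Then Harnack's inequality together with a lower bound at a single point gives $\log|G_\nu|=o(s_\nu)$ uniformly on $D_6$. For that point I would choose one where $|P_\nu|\ge e^{-s_\nu\epsilon}$, which exists outside the small-area set $E_\nu$ from Step~2.

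For \eqref{eq:polynomial-coefficients}, \eqref{eq:cramercomparison} gives $b^p_{q,\nu}-b_{q,\nu}\convmeas0$. Dividing by $s_\nu^q$ keeps this valid, since the difference was in fact exponentially small outside $E_\nu$. Then \eqref{eq:gaugecomparison} replaces $b_{q,\nu}/s_\nu^q$ by $\widehat b_{q,\nu}/s_\nu^q$, and \eqref{eq:canonical-coefficient-identification} identifies the latter with $(t_\nu/s_\nu)^qQ_q(t_\nu z)$ for $q\ge2$ and with $0$ for $q=1$. The main obstacle is the $L^1_{\loc}$ convergence of $s_\nu^{-1}\log|R_\nu|$, specifically ruling out that the many roots of $R_\nu$ outside $D_{\eta_\nu}$ contribute a non-negligible harmonic part. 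The plan handles this with the Harnack argument, anchored by the boundary lower bound $|R_\nu|\ge\tfrac12e^{-2m_\nu}$ on $|z|=\eta_\nu$ that comes from Step~3.
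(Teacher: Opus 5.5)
Your handling of the degree bounds, of $n_{R_\nu}(\eta_\nu)=o(s_\nu)$ and \eqref{eq:rootbounds}, and of \eqref{eq:polynomial-coefficients} is the same as the paper's. The coefficient part goes through \eqref{eq:cramercomparison}, \eqref{eq:gaugecomparison} and \eqref{eq:canonical-coefficient-identification} in both.

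For $s_\nu^{-1}\log|R_\nu|\to0$ in $L^1_{\loc}(D_4)$ you take a different route. The paper never factors $R_\nu$. It shows $s_\nu^{-1}\log|P_\nu|\to0$ in $L^1_{\loc}(\C)$, using that the roots lie in $D_{64}$, that the degree is $o(s_\nu)$, and \eqref{eq:polynomial-negative-area}. It then notes $|R_\nu/P_\nu-1|\le e^{-(B-\epsilon)s_\nu}$ off the small-area set $\{|P_\nu|<e^{-\epsilon s_\nu}\}$, which gives convergence in measure. Finally it upgrades this to $L^1_{\loc}$ using the upper bound and Lemma~\ref{lem:subharmonic-compactness}.

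Your factorization $R_\nu=\prod_{|\zeta|<\eta_\nu}(z-\zeta)\cdot G_\nu$ at the Rouch\'e circle is more hands-on. Once repaired (see below), it gives the explicit bound $\|\log|R_\nu|\|_{L^1(D_4)}=O(m_\nu+1)$ with no subsequence or compactness argument. The paper's version is shorter because it reuses the small-area set and needs no information about where the roots of $R_\nu$ lie.

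The Harnack step does not work as written. To know that $o(s_\nu)-\log|G_\nu|\ge0$ on $D_{\eta_\nu}$, you need $\sum_{\mathrm{in}}\log^-|z-\zeta|=o(s_\nu)$ \emph{pointwise} on $|z|=\eta_\nu$, where $\zeta$ runs over the interior roots of $R_\nu$. The bound $|P_\nu|\ge e^{-2m_\nu}$ controls the roots of $P_\nu$, not those of $R_\nu$. The obvious estimates only keep the latter at distance $e^{-O(m_\nu)}$ from the circle, which gives $O(m_\nu^2)$, and $m_\nu^2$ need not be $o(s_\nu)$. Your first suggested anchor also fails in general: $P_\nu(0)=0$ whenever $W_{\fv}(0)=0$.

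No Harnack argument is needed. On $|z|=\eta_\nu$ you already have $|R_\nu|\ge\tfrac12e^{-2m_\nu}$ and $|z-\zeta|<20$. Hence $\log|G_\nu|\ge-2m_\nu-\log2-m_\nu\log20$ on the circle. Since $G_\nu$ has no zeros in the closed disk, the minimum principle extends this lower bound to $D_{\eta_\nu}$. Consequently $\log^-|R_\nu|\le\sum_{\mathrm{in}}\log^-|z-\zeta|+O(m_\nu+1)$ on $D_4$. Together with your upper bound for $\log^+|R_\nu|$, this gives $\|\log|R_\nu|\|_{L^1(D_4)}=O(m_\nu+1)=o(s_\nu)$. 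Only a lower bound for the exterior factor is ever needed.
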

\begin{proof}
Steps~1 and~3 of the proof of Proposition~\ref{prop:localcompact},
applied as specified in the statement, give the approximation, degree
bound, and separating circle, including \eqref{eq:rootbounds}. Since all roots of $P_\nu$ lie in $D_{64}$ and
$\deg P_\nu=o(s_\nu)$, the bound
$\log^+|P_\nu(z)|\le(\deg P_\nu)\log(R+64)$ on $D_R$, together with
\eqref{eq:polynomial-negative-area}, gives
\begin{equation}\label{eq:small-polynomial-log}
 s_\nu^{-1}\log|P_\nu|\longrightarrow0
 \quad\hbox{in }L^1_{\loc}(\C).
\end{equation}
The area estimate holds on $D_R$ with the same bound $\pi\deg P_\nu/2$.

Fix $0<\epsilon<B$, where $B$ is as in \eqref{eq:wronskiapprox}.
Outside $\{|P_\nu|<e^{-\epsilon s_\nu}\}\cap D_{12}$, whose area tends to
zero by \eqref{eq:polynomial-negative-area}, we have
$|R_\nu/P_\nu-1|\le e^{-(B-\epsilon)s_\nu}$.
Together with \eqref{eq:small-polynomial-log}, this proves
$s_\nu^{-1}\log|R_\nu|\convmeas0$ on $D_4$.
These logarithms are locally uniformly bounded above by
\eqref{eq:wronskiapprox}, so Lemma~\ref{lem:subharmonic-compactness} upgrades
the convergence to $L^1_{\loc}$. This proves \eqref{eq:replacement-data}.
Finally, \eqref{eq:cramercomparison}, \eqref{eq:gaugecomparison}, and
\eqref{eq:canonical-coefficient-identification} give
\eqref{eq:polynomial-coefficients}, including the case $q=1$.
\end{proof}

\section{Growth indices}\label{sec:indices}

We use the full interval of P\'olya peak orders. The exact peak inequality
makes it possible to identify a peak order without introducing an additional
error in the exponent.

\subsection{Drasin--Shea indices and P\'olya peaks}

For a positive, unbounded, nondecreasing function $T$ on $(0,\infty)$,
its Drasin--Shea
indices \cite{DS72} are
\begin{equation}\label{eq:strong-indices}
 \begin{split}
 \rho^*(T)&=\sup\left\{p\in\R:
       \limsup_{x,t\to\infty}\frac{T(xt)}{x^pT(t)}=\infty\right\},\\
 \rho_*(T)&=\inf\left\{p\in\R:
       \liminf_{x,t\to\infty}\frac{T(xt)}{x^pT(t)}=0\right\}.
 \end{split}
\end{equation}
The limits are joint limits in $x,t$. For the characteristic of a
nonconstant holomorphic curve,
\begin{equation}\label{eq:index-order-bounds}
 0\le\rho_*(T)\le\lambda(f)\le\rho(f)\le\rho^*(T).
\end{equation}
We use the P\'olya peak theorem of Drasin--Shea \cite{DS72} in the form
stated in \cite[p.~1203]{Ere93}.

\begin{lem}[{\cite[p.~1203]{Ere93}}]\label{lem:peaks}
Let $T\colon(0,\infty)\to(0,\infty)$ be unbounded and nondecreasing.
For every finite
$\mu>0$ in $[\rho_*(T),\rho^*(T)]$, there are $r_\nu\to\infty$ and
$\epsilon_\nu\downarrow0$ such that
\begin{equation}\label{eq:peak}
 T(tr_\nu)\le(1+\epsilon_\nu)t^\mu T(r_\nu),
 \qquad \epsilon_\nu\le t\le\epsilon_\nu^{-1}.
\end{equation}
\end{lem}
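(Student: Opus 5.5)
We would prove Lemma~\ref{lem:peaks} by the classical maximization argument of Drasin--Shea, applied to the normalized function $\varphi(s)=T(s)/s^\mu$. The first step is to reduce to a single fixed $\epsilon$. It suffices to show that for every $\epsilon\in(0,1)$ and every $R$ there is $r\ge R$ with
\[
 T(tr)\le(1+\epsilon)t^\mu T(r),\qquad \epsilon\le t\le\epsilon^{-1}.
\]
Given this, taking $\epsilon=\epsilon_\nu=1/\nu$ and $R=\nu+r_{\nu-1}$ yields the required sequences $r_\nu\to\infty$ and $\epsilon_\nu\downarrow0$. The inequality above says $\varphi(tr)\le(1+\epsilon)\varphi(r)$, so $r$ is an approximate local maximum of $\varphi$ on a logarithmic window of fixed width $2\log(1/\epsilon)$.

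For $\rho_*(T)<\mu<\rho^*(T)$ we argue directly from the definitions \eqref{eq:strong-indices}.
\begin{itemize}
\item Since $\mu<\rho^*$, there are arbitrarily large $t_1$ and $x$ with $\varphi(xt_1)\ge K\varphi(t_1)$, for any prescribed $K$; put $t_2=xt_1$.
\item Since $\mu>\rho_*$, applying the $\liminf$ definition with base point $t_2$ (which is large) gives $t_3=yt_2$, with $y$ large, such that $\varphi(t_3)\le K^{-1}\varphi(t_2)$.
\item Let $r$ be a point of $[t_1,t_3]$ where $\varphi(r)\ge(1+\epsilon)^{-1/2}\sup_{[t_1,t_3]}\varphi$. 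Such a point exists because $T$ is monotone, so $\varphi$ is bounded there.
\end{itemize}
For $t\in[\epsilon,\epsilon^{-1}]$ with $tr\in[t_1,t_3]$, the peak inequality then holds automatically. It remains to guarantee that $[\epsilon r,\epsilon^{-1}r]\subset[t_1,t_3]$, that is, that $r$ is at logarithmic distance at least $\log(1/\epsilon)$ from both endpoints.

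Keeping $r$ away from the endpoints is the main obstacle. Monotonicity of $T$ only gives the lower bound $\varphi(s)\ge(t_1/s)^\mu\varphi(t_1)$ near $t_1$, not an upper bound. To handle this, we would first try to choose the endpoints so that the value of $\varphi$ there controls $\varphi$ on a neighbourhood. Concretely, we replace $t_1$ by $t_1'=\epsilon^{-1}t_1$ and use
\[
 \varphi(s)\le\frac{T(\epsilon^{-2}t_1)}{t_1^\mu}\quad\text{for } s\in[t_1,\epsilon^{-2}t_1].
\]
The ratio $T(\epsilon^{-2}t_1)/T(t_1)$ is then bounded along a subsequence, using finiteness of $\rho^*$ for the relevant pairs. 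If this fails, we would switch to maximizing $\varphi_\mu(s)\cdot\min\{(s/t_1)^{\delta},(t_3/s)^{\delta}\}$ for a small $\delta>0$. Since $\mu\pm\delta$ still lies inside $(\rho_*,\rho^*)$, this weight forces the maximizer into the interior. The extra factor is at most $(1/\epsilon)^{\delta}$ across the window, and this is absorbed into $1+\epsilon$ by taking $\delta\to0$ along with $\epsilon$.

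Finally, the endpoint cases $\mu=\rho_*$ or $\mu=\rho^*$ (finite and positive) are obtained by a diagonal argument. We take $\mu_k\to\mu$ from inside the interval, with peaks $r_{k}$ for $\mu_k$ and tolerance $\epsilon_k$. The factor $t^{\mu_k-\mu}$ lies between $\epsilon_k^{|\mu_k-\mu|}$ and $\epsilon_k^{-|\mu_k-\mu|}$ on the window. Choosing $\mu_k$ so close to $\mu$ that this is at most $1+\epsilon_k$ converts peaks of order $\mu_k$ into peaks of order $\mu$, with tolerance $3\epsilon_k$.
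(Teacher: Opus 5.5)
Your proposal has two genuine gaps; the minor slip in the second bullet is easy to repair. For context, the paper does not prove this lemma but quotes it from Drasin--Shea via \cite{Ere93}.

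\textbf{First gap: the degenerate interval.} You obtain the endpoint cases by approximating $\mu$ from inside $(\rho_*,\rho^*)$. This is impossible when $\rho_*(T)=\rho^*(T)=\mu$. That case is not marginal. It is exactly the situation of the curves in Theorem~\ref{thm:main}, and Proposition~\ref{prop:indices} needs peaks at $\mu$ when the index interval is a single point, in order to conclude $\mu\in\Gn$.

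\textbf{Second gap: $\rho^*=\infty$.} In the interior case, your control of the left end of the window uses $\rho^*(T)<\infty$, which is not a hypothesis. The lemma allows $\rho^*=\infty$, and Proposition~\ref{prop:indices} uses it precisely to rule out an infinite upper endpoint. When $\rho^*<\infty$ your argument does work: monotonicity and one large fixed multiplier give $T(t_1/\epsilon)\le C_\epsilon T(t_1)$. The right end needs nothing extra, since monotonicity gives $\varphi\le\epsilon^{-\mu}\varphi(t_3)$ on $[\epsilon t_3,t_3]$. So with $K>(1+\epsilon)\max\{C_\epsilon,\epsilon^{-\mu}\}$ the near-maximizer is pushed off both ends. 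Your argument is therefore complete only for $\rho_*<\mu<\rho^*<\infty$.

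The weighted fallback does not fix this. The weight $\min\{(s/t_1)^\delta,(t_3/s)^\delta\}$ is at most $\epsilon^{-\delta}$ on $[t_1,t_1/\epsilon]$. So if $T$ jumps just after $t_1$, the weighted maximum still lies within logarithmic distance $\log(1/\epsilon)$ of $t_1$. Where $\mu\pm\delta$ sits relative to the index interval says nothing about this local behaviour.

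\textbf{Minor point.} The joint $\liminf$ in \eqref{eq:strong-indices} gives a drop from \emph{some} large base point, not from the prescribed $t_2$. Take the base point $s\ge t_2$ instead, and compare with $\sup_{[t_1,t_3]}\varphi\ge\max\{\varphi(t_2),\varphi(s)\}$.

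\textbf{The missing idea} is to argue by contradiction at $\mu$ itself, keeping the left end of the window fixed. Suppose that for some $\epsilon\in(0,1)$ and some $R$ no $r\ge R$ is a peak, and put $\delta=\log(1+\epsilon)/\log(1/\epsilon)$.

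If $\varphi$ is unbounded on $[R,\infty)$, let $M(S)=\sup_{[R,S]}\varphi$. Once $M(S)>(1+\epsilon)\sup_{[R,R/\epsilon]}\varphi$, every point with $\varphi>(1+\epsilon)^{-1}M(S)$ must lie in $(\epsilon S,S]$, since otherwise its whole window lies in $[R,S]$ and it is a peak. Hence $M(S)\ge(1+\epsilon)M(\epsilon S)$. Iterating this, and using $T(xt)\ge T(s)$ at such a near-maximizer $s\in(\epsilon xt,xt]$, gives $T(xt)\ge c\,x^{\mu+\delta}T(t)$ for $x\ge1$ and large $t$. Thus $\rho_*(T)\ge\mu+\delta$.

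If $\varphi$ is bounded, any point far out that nearly attains a positive $\limsup\varphi$ would be a peak, so $\varphi\to0$. Then $m(S)=\sup_{[S,\infty)}\varphi$ satisfies $m(S/\epsilon)\le(1+\epsilon)^{-1}m(S)$. Applying $T(t)\ge T(s)$ to a near-maximizer $s\in[\epsilon t,t)$ for $m(\epsilon t)$ gives $\varphi(t)\ge c\,m(t)$. Hence $T(xt)\le Cx^{\mu-\delta}T(t)$, and $\rho^*(T)\le\mu-\delta$.

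Either conclusion contradicts $\mu\in[\rho_*,\rho^*]$. This argument treats interior points, endpoints, degenerate intervals and $\rho^*=\infty$ uniformly, and needs no diagonal step.
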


In particular, for every fixed $t>0$, inequality \eqref{eq:peak} gives
$\limsup_\nu T(tr_\nu)/T(r_\nu)\le t^\mu$. The endpoints of the index
interval are included whenever they are finite and positive.

\subsection{Identification of the indices}

\begin{prop}\label{prop:indices}
Let $f$ be a transcendental linearly non-degenerate holomorphic curve of
finite lower order with $N_1(r,f)=o(T(r,f))$. Then
\begin{equation}\label{eq:indices-collapse}
 \rho_*(T)=\lambda(f)=\rho(f)=\rho^*(T)\in\{0\}\cup\Gn,
 \qquad T(r)=T(r,f).
\end{equation}
\end{prop}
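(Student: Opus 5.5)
The plan is to show that every finite positive $\mu\in[\rho_*(T),\rho^*(T)]$ lies in $\Gn$. Since $\Gn$ is discrete and the index interval is connected, this forces the interval to be a single point (or to meet $(0,\infty)$ in at most one point). We first check that $\rho^*(T)<\infty$. For this we take a Pólya peak sequence at some $\mu\le\lambda+1$; it exists by Lemma~\ref{lem:peaks} because $\rho_*\le\lambda<\infty$. We then argue that an infinite upper index would produce peaks of arbitrarily large order $\mu$, and the same rescaling argument below rules this out, because the admissible orders are bounded by the degree bounds on the limit coefficients. Alternatively, we simply treat every finite $\mu$ in the interval and note that the interval contains no non-$\Gn$ points. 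Together with \eqref{eq:index-order-bounds}, this gives \eqref{eq:indices-collapse}.

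Fix such a $\mu>0$ and peaks $r_\nu$. Put $t_\nu=r_\nu$ and $s_\nu=T(r_\nu)$. Then \eqref{eq:peak} gives $T(256t_\nu)\le C s_\nu$ with $C\approx256^\mu$. Since $f$ is transcendental, $\log t_\nu=o(s_\nu)$. The hypothesis \eqref{eq:mainhyp} and the peak inequality give $N_{W_{\fv}}(256t_\nu)=o(T(256t_\nu))=o(s_\nu)$. Thus Proposition~\ref{prop:representation} applies. Using \eqref{eq:representationbounds} and Lemma~\ref{lem:subharmonic-compactness}, and passing to subsequences on an exhaustion of $\C$ (applied at each dilation $R$ through the peak inequality on $[\epsilon_\nu,\epsilon_\nu^{-1}]$), we get limits $s_\nu^{-1}\log|g_{\nu,j}|\to v_j$ in $L^1_{\loc}(\C)$. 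We also get $u=\max_jv_j$ with $\mean R{u}=\lim T(Rr_\nu)/T(r_\nu)$. This limit is at most $R^\mu$, equals $1$ at $R=1$, and satisfies $u(0)=0$. The coefficients $(t_\nu/s_\nu)^qQ_q(t_\nu z)$ converge in measure to entire functions $q_q$. By the scaling \eqref{eq:scalecoeff} these limits are consistent across dilations, and the bound on $D_1$ depending only on $C$ and $n+1$, transported to $D_R$, gives $|q_q(z)|=O(|z|^{q\mu}+1)$ roughly. Hence each $q_q$ is a polynomial. Lemma~\ref{lem:sum} gives $\sum v_j\ge0$ because $s_\nu^{-1}\log|P_\nu|\to0$.

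Next we pass to the limit in the differential equations. Lemma~\ref{lem:logderivlimit} applied to each $g_{\nu,j}$, together with \eqref{eq:gaugecomparison}, yields on the set where $v_j$ is not locally harmonic-degenerate the characteristic equation
\[
 \phi^{n+1}+\sum_{q=2}^{n+1}q_q\phi^{n+1-q}=0,\qquad \phi=2\partial v_j,
\]
almost everywhere. So each $2\partial v_j$ is a.e. a root of the algebraic polynomial $F(z,\phi)$. Not all $q_q$ vanish, since otherwise $\partial v_j=0$, so all $v_j$ are constant, which contradicts $\mean1u=1$ and $u(0)=0$. On the complement of a finite set and of branch cuts, $v_j$ is piecewise $\Ree\int\phi_k$, where the $\phi_k$ are branches of an algebraic function. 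The Puiseux expansion at $\infty$ gives $\phi_k\sim cz^{k'/q'}$ with $q'\le n+1$. Hence $v_j$, and so $u$, are locally of the form $\Ree(cz^{\rho'})$ with $\rho'=1+k'/q'$ near infinity. Homogeneity is obtained from the peak property: $\mean Ru\le R^\mu$ with equality structure at $R=1$, iterated over dilations, forces the growth exponent of $u$ to be exactly $\mu$. The leading term gives $\mean Ru\sim cR^{\rho'}$, so $\mu=\rho'\in\Gn$; here $q'\ge2$ since $\rho'=1+k'$ with $q'=1$ can be rewritten with $q'=2$.

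The main obstacle is this last identification. The limit $u$ is only asymptotically homogeneous, and we must show rigorously that the degree of growth of $\mean R u$, dictated by the Puiseux exponents of the largest branch, equals the peak order $\mu$. We would attack it by rescaling the limit once more: the family $u(Rz)/\mean Ru$, with $R\to\infty$ and $R\to0$, converges to $\Ree(cz^{\rho'})^+$-type homogeneous functions. Combined with the two-sided peak inequality for all $t\in(0,\infty)$ in the limit, which says $\mean t u\le t^\mu$ with equality at $t=1$, and with convexity of $\log\mean t u$ in $\log t$, this forces $\mean tu=t^\mu$ exactly, and hence $\mu$ equals the common Puiseux exponent. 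The case where all $q_q\equiv0$ in the limit, which is possible when $\mu=0$, corresponds to the alternative $0$ in \eqref{eq:indices-collapse}.
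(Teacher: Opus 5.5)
\textbf{There is a genuine gap: the step that identifies $\mu$ does not work.} Your overall architecture is what is needed. That includes the peaks, Proposition~\ref{prop:representation} at the peak scale, limits of the rescaled coefficients, nonvanishing of some limit coefficient via the constant-limit contradiction, and discreteness of $\Gn$ to collapse the index interval (with $\rho^*=\infty$ excluded by your ``alternative'' argument).

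From large dilations alone you only get that the limit coefficients are polynomials, and you then try to recover $\mu$ from the growth of $u$. The limiting peak inequality is $\mean{t}{u}\le t^\mu$ for all $t>0$, with equality at $t=1$. This only says that the convex function $s\mapsto\mean{e^s}{u}$ is tangent to $e^{\mu s}$ at $s=0$; it does not give $\mean{t}{u}=t^\mu$. The extra ingredient you cite, convexity of $\log\mean{t}{u}$ in $\log t$, is false in general: for $u=\log^+|z|$ one has $\log\mean{r}{u}=\log\log r$ for $r>1$, which is concave in $\log r$. Blowing $u$ up at $0$ and down at $\infty$ only places $\mu$ between the growth exponents of $u$ at infinity and at the origin. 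For non-monomial coefficients these exponents need not coincide, so $\mu$ is not pinned down. The description of $v_j$ as locally $\Ree(cz^{\rho'})$ near infinity is also heuristic, since $2\partial v_j$ may switch between branches, including small ones.

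\textbf{The missing idea} is to use the peak inequality at small dilations as well, and to apply it to the gauge-invariant coefficients rather than to $u$.
\begin{enumerate}
\item Since \eqref{eq:peak} holds on $[\epsilon_\nu,\epsilon_\nu^{-1}]$, for every fixed $R>0$, including $R<1$, you can apply Proposition~\ref{prop:representation} at scale $Rr_\nu$ with normalization $R^\mu T(r_\nu)$. The constant $2\cdot256^\mu$ is independent of $R$.
\item By \eqref{eq:scalecoeff}, the limits $a_q$ of $(r_\nu/T(r_\nu))^qQ_q(r_\nu z)$ then satisfy $\sup_{|z|\le R}|a_q(z)|\le KR^{q(\mu-1)}$ for all $R>0$. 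The exponent is $q(\mu-1)$, not $q\mu$, because of the factor $t^q$ in \eqref{eq:scalecoeff}.
\item Cauchy's estimates with $R\to0$ and $R\to\infty$ leave only the monomial $c_qz^{q(\mu-1)}$.
\item Your nonvanishing argument then gives $q(\mu-1)\in\Z_{\ge0}$ for some $q$, i.e.\ $\mu\in\Gn$.
\end{enumerate}
This needs no Puiseux analysis, no homogeneity of $u$, and no construction of the $v_j$ on all of $\C$.

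Separately, your first route to $\rho^*<\infty$ is incorrect: admissible orders are not bounded, since $\Gn$ is unbounded. Your alternative argument is the right one.
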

\begin{proof}
If $\rho^*(T)=0$, the conclusion follows from \eqref{eq:index-order-bounds}.
Otherwise, fix a finite $\mu>0$ in the index interval, choose the peaks in
Lemma~\ref{lem:peaks}, and put $S_\nu=T(r_\nu)$. Transcendence gives
$\log r_\nu=o(S_\nu)$.

\step{Step 1. Limits on all compact subsets.}
For a fixed $R>0$, apply Proposition~\ref{prop:representation} with
$t_\nu=Rr_\nu$ and normalization $R^\mu S_\nu$. Inequality
\eqref{eq:peak} gives, for all sufficiently large $\nu$,
\begin{equation}\label{eq:peak-characteristic-scale}
 T(256Rr_\nu)\le2\,256^\mu R^\mu S_\nu.
\end{equation}
The constant is independent of $R$. The remaining conditions in
\eqref{eq:scalehyp} follow from $\log r_\nu=o(S_\nu)$ and
$N_1=o(T)$. For $2\le q\le n+1$, put
$A_{q,\nu}(z)=(r_\nu/S_\nu)^qQ_q(r_\nu z)$. The coefficient at scale
$Rr_\nu$ is related to it by
\begin{equation}\label{eq:peak-rescaling-relation}
 A_{q,\nu}(Rz)=R^{q(\mu-1)}
     \left(\frac{Rr_\nu}{R^\mu S_\nu}\right)^qQ_q(Rr_\nu z).
\end{equation}
Applying compactness for $R=1,2,\ldots$ and taking a diagonal subsequence
gives entire functions $a_q$ with $A_{q,\nu}\convmeas a_q$ on $\C$.
Uniqueness of limits in measure makes the limits agree on overlapping
disks. For any fixed $R>0$, apply the same compactness assertion to this
subsequence at scale $R$, and use uniqueness once more. Its uniform bound,
\eqref{eq:peak-characteristic-scale}, and
\eqref{eq:peak-rescaling-relation} give
\begin{equation}\label{eq:peak-coefficient-bound}
 \sup_{|z|\le R}|a_q(z)|\le K_{n,\mu}R^{q(\mu-1)},
 \qquad R>0.
\end{equation}

\step{Step 2. The limits do not all vanish.}
Suppose that every $a_q$ is zero. Apply
Proposition~\ref{prop:representation} at $t_\nu=r_\nu$, $s_\nu=S_\nu$.
Let $\gv_\nu$ and $P_\nu$ be the representation and Wronskian constructed
in Step~1 of its proof, and let $b_{q,\nu}$ be the coefficients of
$L_{\gv_\nu}$ from Lemma~\ref{lem:fundamental-operator}. The center values and upper bounds in \eqref{eq:representationbounds},
together with Lemma~\ref{lem:subharmonic-compactness}, give a further
subsequence with
$S_\nu^{-1}\log|g_{\nu,j}|\to u_j$ in $L^1_{\loc}(D_4)$, where the
$u_j$ are subharmonic limits not identically $-\infty$. Equations
\eqref{eq:gaugecomparison} and \eqref{eq:canonical-coefficient-identification}
imply $b_{q,\nu}/S_\nu^q\convmeas0$ for every $q$.
Divide the equation for $g_{\nu,j}$ by $S_\nu^{n+1}g_{\nu,j}$:
\begin{equation}\label{eq:normalized-peak-equation}
 \frac{g_{\nu,j}^{(n+1)}}{S_\nu^{n+1}g_{\nu,j}}
 +\sum_{q=1}^{n+1}\frac{b_{q,\nu}}{S_\nu^q}
       \frac{g_{\nu,j}^{(n+1-q)}}{S_\nu^{n+1-q}g_{\nu,j}}=0.
\end{equation}
Lemma~\ref{lem:logderivlimit} permits passage to the limit in
\eqref{eq:normalized-peak-equation}, giving $(2\partial u_j)^{n+1}=0$ almost
everywhere. Its Sobolev conclusion then makes every $u_j$ constant.
By \eqref{eq:norm-max}, $S_\nu^{-1}\log\norm{\gv_\nu}$ converges in
$L^1_{\loc}(D_4)$ to a constant $c$.

Set $F_\nu(t)=T(tr_\nu)/S_\nu$. Integrating \eqref{eq:meanidentity} over
annuli gives
\begin{equation}\label{eq:annular-characteristic-limit}
 \frac{2}{b^2-a^2}\int_a^b F_\nu(t)t\,\mathrm dt\longrightarrow c,
 \qquad 0<a<b<4.
\end{equation}
If $0<b<1$ and $a=b/2$, monotonicity and \eqref{eq:peak} imply
$c\le\limsup_\nu F_\nu(b)\le b^\mu$. Letting $b\downarrow0$ gives $c\le0$.
If $2<a<b<3$, however, $F_\nu(t)\ge F_\nu(1)=1$, and
\eqref{eq:annular-characteristic-limit} gives $c\ge1$. This contradiction
proves that some $a_q$ is nonzero.

\step{Step 3. The peak order and the index interval.}
Write $a_q(z)=\sum_{m\ge0}c_{q,m}z^m$. Cauchy's estimate applied to
\eqref{eq:peak-coefficient-bound} gives
$|c_{q,m}|\le K_{n,\mu}R^{q(\mu-1)-m}$ for every $R>0$.
Letting $R$ tend to zero or infinity shows that $c_{q,m}$ can be nonzero
only when $m=q(\mu-1)$. By Step~2, this occurs for some $q$, so
$q(\mu-1)$ is a nonnegative integer and $\mu\in\Gn$.

We have proved that every finite positive point of
$[\rho_*(T),\rho^*(T)]$ belongs to $\Gn$. The set $\Gn$ is locally finite:
on a bounded interval there are only finitely many choices of $q$ and $k$
in \eqref{ord}. The index interval therefore cannot contain two distinct
finite positive points. Its lower endpoint is finite by
\eqref{eq:index-order-bounds} and the finite lower order assumption, so its
upper endpoint cannot be infinite. The interval is a singleton, and
\eqref{eq:index-order-bounds} proves \eqref{eq:indices-collapse}.
\end{proof}

Section~\ref{sec:smallorder} will exclude the value zero. Before turning to
that case, we state the consequence of equal indices needed for arbitrary
sequences of radii.

\subsection{Uniform power bounds}

\begin{lem}\label{lem:power-bounds}
Let $T\colon(0,\infty)\to(0,\infty)$ be unbounded and nondecreasing, with
$\rho_*(T)=\rho^*(T)=\rho\in(0,\infty)$. For every $0<\epsilon<\rho$,
there are $C_\epsilon\ge1$ and $r_\epsilon>0$ such that
\begin{equation}\label{eq:power-bounds}
 C_\epsilon^{-1}\min\{t^{\rho-\epsilon},t^{\rho+\epsilon}\}
 \le\frac{T(tr)}{T(r)}
 \le C_\epsilon\max\{t^{\rho-\epsilon},t^{\rho+\epsilon}\},
 \qquad r,tr\ge r_\epsilon.
\end{equation}
\end{lem}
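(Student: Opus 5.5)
The plan is to read off both inequalities directly from the definitions \eqref{eq:strong-indices} of the Drasin--Shea indices, arguing by contradiction and using monotonicity of $T$ to handle bounded ranges of $t$.

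\emph{Upper bound for $t\ge1$.} Fix $0<\epsilon<\rho$ and set $p=\rho+\epsilon>\rho^*(T)$. By the definition of $\rho^*$, $\limsup_{x,t\to\infty}T(xt)/(x^{p}T(t))<\infty$. Hence there are $X\ge1$ and $K$ such that $T(xr)\le Kx^{p}T(r)$ whenever $x\ge X$ and $r\ge X$. For $1\le t\le X$, monotonicity gives $T(tr)\le T(Xr)\le KX^{p}T(r)\le KX^pt^{p}T(r)$. This yields $T(tr)/T(r)\le C t^{\rho+\epsilon}$ for $t\ge1$ and $r\ge X$.

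\emph{Lower bound for $t\ge1$.} Set $p'=\rho-\epsilon<\rho_*(T)$. By the definition of $\rho_*$ (an infimum), $\liminf_{x,t\to\infty}T(xt)/(x^{p'}T(t))>0$. So there are $X'$ and $k>0$ with $T(xr)\ge kx^{p'}T(r)$ for $x,r\ge X'$. For $1\le t\le X'$, monotonicity gives $T(tr)\ge T(r)\ge (X')^{-p'}t^{p'}T(r)$. Thus $T(tr)/T(r)\ge C^{-1}t^{\rho-\epsilon}$ for $t\ge1$ and $r\ge X'$.

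\emph{Case $t<1$.} Put $r'=tr\ge r_\epsilon$ and $x=1/t>1$. Applying the two bounds above to the pair $(r',x)$ gives
\[
 C^{-1}x^{\rho-\epsilon}\le \frac{T(r)}{T(tr)}\le Cx^{\rho+\epsilon}.
\]
Inverting, $C^{-1}t^{\rho+\epsilon}\le T(tr)/T(r)\le Ct^{\rho-\epsilon}$. For $t<1$ these are exactly $\min\{t^{\rho-\epsilon},t^{\rho+\epsilon}\}$ and $\max\{t^{\rho-\epsilon},t^{\rho+\epsilon}\}$, as required in \eqref{eq:power-bounds}. Take $r_\epsilon=\max\{X,X'\}$ and $C_\epsilon$ the largest of the constants above.

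The only delicate point is interpreting the joint limits in \eqref{eq:strong-indices} correctly: finiteness of the joint $\limsup$ gives the bound only on a region where both $x$ and $t$ are large. The gap of bounded $x$ is filled by monotonicity, and nowhere is $\rho>0$ really needed beyond $\rho-\epsilon>0$. I expect no serious obstacle.
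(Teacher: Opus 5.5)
Your proposal is correct and matches the paper's proof essentially step for step. The definitions of $\rho^*$ and $\rho_*$ give the power bounds when both the multiplier and the radius are large. Monotonicity covers bounded multipliers $t\ge1$, and for $t<1$ you apply the $t\ge1$ bounds at base radius $tr$ with multiplier $1/t$ and invert.
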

\begin{proof}
The definitions in \eqref{eq:strong-indices}, applied to
$p_-=\rho-\epsilon$ and $p_+=\rho+\epsilon$, give constants $c,C>0$ and
$X,r_0\ge1$ such that
$c x^{p_-}\le T(xr)/T(r)\le Cx^{p_+}$ for $x\ge X$ and $r\ge r_0$.
For $1\le x\le X$, monotonicity gives
$1\le T(xr)/T(r)\le T(Xr)/T(r)\le CX^{p_+}$.
Enlarging the constants therefore gives the desired bounds for all $x\ge1$.
For $0<x<1$, apply those bounds at the base radius $xr$ with multiplier
$1/x$ and invert. This is exactly \eqref{eq:power-bounds}.
\end{proof}

\section{The case of order less than one}\label{sec:smallorder}

The remaining possibility in Proposition~\ref{prop:indices} is order zero.
The argument below works more generally for curves of order less than one
and does not use P\'olya peaks. Its starting point is an entire-function
version of Proposition~\ref{prop:initial-basis}.

\subsection{An entire version of the initial-value estimate}

The order of an entire function $h$ is
$\limsup_{r\to\infty}\log^+\log^+M(r,h)/\log r$, with constants assigned
order zero.

\begin{lem}\label{lem:entire-majorant}
Let $\yv=(y_0,\ldots,y_n)$ be a vector of entire functions of order less
than one, normalized by $y_j^{(i)}(0)=\delta_{ij}$ for $0\le i,j\le n$.
If $(a_\ell)$ are the zeros of $W(\yv)$, repeated with multiplicity, then
$\sum_\ell|a_\ell|^{-1}<\infty$ and
\begin{equation}\label{eq:entire-majorant}
 |y_j(z)|\le\frac{|z|^j}{j!}
        \prod_\ell\left(1+\frac{|z|}{|a_\ell|}\right),
 \qquad z\in\C,\quad 0\le j\le n.
\end{equation}
\end{lem}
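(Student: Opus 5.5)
The plan is to deduce \eqref{eq:entire-majorant} from Proposition~\ref{prop:initial-basis} by polynomial approximation. The natural approximants are Taylor truncations of the $y_j$ at high degree, combined with the identity of Lemma~\ref{lem:universal-minors}, which holds for every polynomial space with constants independent of the roots.

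First, summability. Each $y_j$ has order less than one, so $W(\yv)$ is an entire function of order less than one: it is a polynomial in the $y_j^{(i)}$, and derivatives do not raise the order. Its zeros therefore have convergence exponent below one, and $\sum_\ell|a_\ell|^{-1}<\infty$. We have $W(\yv)(0)=1$, so $0$ is not a zero. By Hadamard's theorem (genus zero),
\[
W(\yv)(z)=\prod_\ell(1-z/a_\ell),
\]
since the exponential factor reduces to a constant when the order is below one.

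Next, the derivative bound. Fix $k\ge n+1$ and $j$. As in the proof of Proposition~\ref{prop:initial-basis}, $y_j^{(k)}(0)$ equals the minor $\Delta^\tau(\yv;0)$ with $\tau=(k-n,1^{n-j})$, and the Wronskian at $0$ equals $1$. The goal is
\[
|\Delta^\tau(\yv;0)|\le s!\,e_s\bigl(|a_1|^{-1},|a_2|^{-1},\ldots\bigr),\qquad s=k-j,
\]
where $e_s$ is the (convergent) infinite elementary symmetric function. Taylor's formula then gives
\[
|y_j(z)|\le\frac{|z|^j}{j!}\sum_s e_s|z|^s=\frac{|z|^j}{j!}\prod_\ell\Bigl(1+\frac{|z|}{|a_\ell|}\Bigr),
\]
exactly as before.

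To get the minor bound, let $\pv_N$ be the vector of Taylor polynomials of degree $N$ of the $y_j$. For large $N$ its span $V_N$ is $(n+1)$-dimensional, and $W(\pv_N)\to W(\yv)$ locally uniformly, together with the finitely many derivative jets at $0$ relevant for fixed $k$. Apply \eqref{eq:minor-es-bound} to $V_N$ at $a=0$. For fixed $k$ and $N>k$ the minor of $\pv_N$ at $0$ agrees with that of $\yv$, so
\[
|\Delta^\tau(\yv;0)|\le s!\,|W(\pv_N)(0)|\;e_s\bigl(|a^N_1|^{-1},\ldots,|a^N_{M_N}|^{-1}\bigr),
\]
with $W(\pv_N)(0)=1$.

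The main obstacle is passing to the limit in $e_s$ over the roots $a^N_\ell$ of $W(\pv_N)$. By Hurwitz, roots in a fixed disk converge to those of $W(\yv)$, but the many remaining roots of $W(\pv_N)$ escaping to infinity must contribute $o(1)$ to $e_s$. Control of $\sum_\ell|a^N_\ell|^{-1}$ alone is not enough. The plan is to bound the tail by the coefficients of the normalized polynomial:
\[
e_1\bigl(|a^N_\ell|^{-1}\bigr)\ \text{over the far roots}
\]
should be estimated through Jensen-type counts, using order less than one uniformly in $N$. For Taylor truncations the bound $|p_{N,j}(z)|\le M(r,y_j)$ on $|z|\le r$ fails; instead one has $\sum|c_m|r^m$, which is still of order less than one, uniformly in $N$. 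This yields $n_{W(\pv_N)}(r)\le Cr^{\sigma}$ for $r\le\rho_N$, with $\rho_N\to\infty$.

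A cleaner alternative, which I would try first, avoids the tail problem entirely by applying the identity \eqref{eq:universal-minors} with signs and not taking absolute values. Express $\Delta^\tau/\Delta^\varnothing$ through power sums of $1/(a-a_\ell)$: the coefficients of $1/W$ expand at $0$ through complete symmetric functions. Alternatively, use exact polynomial interpolation, choosing polynomial spaces whose Wronskian is exactly the truncated product $\prod_{\ell\le L}(1-z/a_\ell)$. If neither works, fall back on the uniform counting estimate described above.
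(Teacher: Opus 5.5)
Your fallback, the last route you list, is the paper's proof. You truncate to Taylor polynomials $\pv_N$, apply the polynomial estimate with $W_N=W(\pv_N)$ and $W_N(0)=1$, and remove the escaping roots of $W_N$ with a counting bound that is uniform in $N$. The paper applies \eqref{eq:basismajorant} to $\pv_N$ and passes to the limit in the product. It matches the roots in $D_R$ by Rouch\'e and bounds the logarithm of the remaining factors by $|z|\sum_{|a_{N,\ell}|>R}|a_{N,\ell}|^{-1}$. Your coefficientwise limit in $e_s$ is equivalent, since $e_i$ of the far roots is at most $e_1^i/i!$. As written, however, the decisive step is stated in a form that does not suffice, and the two routes you would try first do not work.

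\textbf{The counting range.} A bound $n_{W_N}(r)\le Cr^\sigma$ valid only for $r\le\rho_N$ says nothing about the roots that matter. $W_N$ has up to $(n+1)N-\kappa$ roots, and almost all of them escape. Those beyond $\rho_N$ have reciprocal sum bounded a priori only by about $(n+1)N/\rho_N$, which need not tend to zero. You need the count for every $r$, uniformly in $N$, and your own majorant gives it:
\begin{itemize}
\item $\sum_m|c_m|r^m\le 2M(2r,y_j)$ for all $r$ and all $N$.
\item With Cauchy's estimates, $\log M(t,W_N)\le C_1t^\sigma$ for all $t\ge1$, where $\sigma<1$ exceeds the orders of the $y_j$ and $C_1$ is independent of $N$.
\item Jensen's formula then gives $n_{W_N}(t)\le C_2t^\sigma$.
\item Integration by parts gives $\sum_{|a_{N,\ell}|>R}|a_{N,\ell}|^{-1}\le C_3R^{\sigma-1}$, which is the uniform tail you need.
\end{itemize}

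\textbf{The ``cleaner alternatives''.} In \eqref{eq:universal-minors} the coefficients $\langle C_I^\tau\ev,\ev\rangle$ depend on the space through $\ev$. Apart from $\tau=(1)$, where the ratio is $W'/W$, the minors are not functions of the Wronskian roots. For instance, $\operatorname{span}(1,z^3)$ and $\operatorname{span}(z,z^2)$ both have monic Wronskian $z^2$. Yet $b_2=\Delta^{(1,1)}/\Delta^\varnothing$ is $0$ for the first and $2/z^2$ for the second. So no signed symmetric-function formula in the roots exists, and the coefficients would in any case change with $N$. Likewise, you give no way to produce polynomial spaces with Wronskian exactly $\prod_{\ell\le L}(1-z/a_\ell)$ that converge to $\yv$ with the prescribed initial jets, and nothing in the paper does. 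Build the proof on the uniform counting estimate.
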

\begin{proof}
\step{Step 1. Uniform estimates for Taylor polynomials.}
Choose $\sigma\in(0,1)$ larger than the orders of all components, so that
$\log M(t,y_j)\le Ct^\sigma$ for $t\ge1$. Let $p_{j,N}$ be the Taylor
polynomial of degree $N\geq n$ and set
$W_N=W(p_{0,N},\ldots,p_{n,N})$. Cauchy's coefficient estimate gives
$M(t,p_{j,N})\le2M(2t,y_j)$. Applying Cauchy's derivative estimate on
$D_{2t}$ gives
$M(t,p_{j,N}^{(k)})\le C_kt^{-k}M(4t,y_j)$ for $0\le k\le n$.
The determinant expansion therefore implies
\begin{equation}\label{eq:WN-growth}
 W_N(0)=1,\qquad \log M(t,W_N)\le C_1t^\sigma\quad(t\ge1),
\end{equation}
with $C_1$ independent of $N$.

\step{Step 2. Control of roots escaping to infinity.}
Jensen's formula and \eqref{eq:WN-growth} imply
$n_{W_N}(t)\le C_2t^\sigma$ for $t\ge1$. If $a_{N,\ell}$ are the roots
of $W_N$, integration by parts gives
\begin{equation}\label{eq:reciprocal-tail}
 \sum_{|a_{N,\ell}|>R}\frac1{|a_{N,\ell}|}
 \le\int_R^\infty\frac{n_{W_N}(t)}{t^2}\,\mathrm dt
 \le C_3R^{\sigma-1},\qquad R\ge1.
\end{equation}
The same estimates apply to $W(\yv)$ by local uniform convergence, and
$W(\yv)(0)=1$. In particular, its reciprocal root sum converges.

\step{Step 3. Passage to the limit.}
The initial derivatives are preserved by Taylor truncation, so
\eqref{eq:basismajorant} applies to $(p_{0,N},\ldots,p_{n,N})$ at zero.
For any $R$ whose circle contains no zero of $W(\yv)$, Rouch\'e's theorem
matches the roots of $W_N$ in $D_R$ with those of $W(\yv)$, with
multiplicity. For fixed $r$, the logarithm of the contribution from roots
outside $D_R$ is at most $C_3rR^{\sigma-1}$, by
\eqref{eq:reciprocal-tail} and $\log(1+x)\le x$.
First let $N\to\infty$ for fixed $R$, and then let $R\to\infty$.
Since $\sigma<1$, this proves \eqref{eq:entire-majorant}.
\end{proof}

To express the product estimate as an integral, we use Tonelli's theorem
in its general form.

\begin{lem}[{\cite[p.~67, Theorem~2.37(a)]{Fol99}}]\label{lem:Tonelli}
Let $(X,\mathcal A,\mu)$ and $(Y,\mathcal B,\nu)$ be $\sigma$-finite
measure spaces, and let $\Phi\colon X\times Y\to[0,\infty]$ be
$\mathcal A\otimes\mathcal B$-measurable. The functions
$F(x)=\int_Y\Phi(x,y)\,\mathrm d\nu(y)$ and
$G(y)=\int_X\Phi(x,y)\,\mathrm d\mu(x)$ are measurable, and
\[
 \int_X F\,\mathrm d\mu
 =\int_{X\times Y}\Phi\,\mathrm d(\mu\times\nu)
 =\int_Y G\,\mathrm d\nu.
\]
The common value may be infinite.
\end{lem}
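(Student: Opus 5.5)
Since this is the standard Tonelli theorem, quoted from Folland, I would follow the classical route: first prove the statement for indicator functions of sets in $\mathcal A\otimes\mathcal B$, then extend it by linearity and monotone convergence.

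\emph{Step 1: finite measures.} Assume first that $\mu$ and $\nu$ are finite. Let $\mathcal C$ be the family of sets $E\in\mathcal A\otimes\mathcal B$ with the following two properties:
\begin{itemize}
\item the functions $x\mapsto\nu(E_x)$ and $y\mapsto\mu(E^y)$ are measurable;
\item $\int_X\nu(E_x)\,\mathrm d\mu(x)=\int_Y\mu(E^y)\,\mathrm d\nu(y)=(\mu\times\nu)(E)$.
\end{itemize}
A measurable rectangle $A\times B$ lies in $\mathcal C$, because $\nu((A\times B)_x)=\chi_A(x)\nu(B)$, and symmetrically for the other section. Finite disjoint unions of rectangles form an algebra, and they lie in $\mathcal C$ by additivity.

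Next I would show that $\mathcal C$ is a monotone class:
\begin{itemize}
\item For increasing unions, the section measures increase pointwise. Measurability passes to the limit, and the integrals converge by the monotone convergence theorem.
\item For decreasing intersections, finiteness of $\mu$ and $\nu$ gives integrable dominating functions. Dominated convergence, or continuity from above, then gives the same conclusion.
\end{itemize}
The monotone class lemma then yields $\mathcal C=\mathcal A\otimes\mathcal B$.

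\emph{Step 2: $\sigma$-finite measures.} Write $X\times Y$ as an increasing union of rectangles $X_k\times Y_k$ of finite measure. Apply Step 1 to $E\cap(X_k\times Y_k)$, with the restricted measures, and let $k\to\infty$. The monotone convergence theorem gives the identity for every $E\in\mathcal A\otimes\mathcal B$.

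\emph{Step 3: general $\Phi$.} By linearity, the conclusion holds for nonnegative simple measurable functions $\Phi$. For a general measurable $\Phi\colon X\times Y\to[0,\infty]$, choose simple functions $0\le\Phi_k\uparrow\Phi$. By the monotone convergence theorem applied in $y$, the functions $F_k(x)=\int_Y\Phi_k(x,\cdot)\,\mathrm d\nu$ increase to $F(x)$ for every $x$. Hence $F$ is measurable as a pointwise limit of measurable functions. A second application of monotone convergence, this time in $x$, together with the same argument for $G$, gives the three equal integrals, possibly infinite.

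The main obstacle is the decreasing-intersection step of the monotone class argument. This is exactly where finiteness of the measures is needed, which is why the $\sigma$-finite reduction in Step 2 is required. Everything else is routine measurability bookkeeping.
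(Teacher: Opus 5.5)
Your proposal is correct and is essentially the standard argument of the cited source (Folland, Theorems~2.36 and~2.37): a monotone class argument for indicator functions under finite measures, exhaustion by finite-measure rectangles in the $\sigma$-finite case, and then simple functions plus monotone convergence; the paper itself only quotes the result without proof. One preliminary fact is used silently and should be stated: the sections $E_x\in\mathcal B$ and $E^y\in\mathcal A$, and hence $\Phi(x,\cdot)$ and $\Phi(\cdot,y)$, are measurable, which is needed for $\nu(E_x)$ and $F(x)$ to be defined, and it holds because the sets with measurable sections form a $\sigma$-algebra containing the rectangles.
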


For a normalized entire system, this gives the following integral estimate.

\begin{cor}\label{cor:convolution}
Let $\yv=(y_0,\ldots,y_n)$ satisfy the hypotheses of
Lemma~\ref{lem:entire-majorant}, and list its Wronskian zeros as
$(a_\ell)_{\ell\in\mathcal I}$, including multiplicities. Put
\[
 H_{\yv}(r)=\log\max_{0\leq j\leq n}M(r,y_j),\qquad
 N_{\yv}(r)=N_{W(\yv)}(r)
          =\sum_{\ell\in\mathcal I}\log^+\frac r{|a_\ell|}.
\]
Then, for every $r>0$,
\begin{equation}\label{eq:convolution}
 H_{\yv}(r)\leq n\log^+r+
     r\int_0^\infty\frac{N_{\yv}(t)}{(r+t)^2}\,\mathrm dt.
\end{equation}
\end{cor}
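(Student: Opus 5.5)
Start from Lemma~\ref{lem:entire-majorant}. For $|z|=r$ and each $j$, the lemma gives
\[
 \log|y_j(z)|\le j\log r-\log j!+\sum_{\ell\in\mathcal I}\log\Bigl(1+\frac r{|a_\ell|}\Bigr).
\]
For $j\le n$ we have $j\log r\le n\log^+r$ and $-\log j!\le0$. Maximizing over $j$ and over $|z|=r$ therefore gives
\[
 H_{\yv}(r)\le n\log^+r+\sum_\ell\log(1+r/|a_\ell|).
\]
Everything now reduces to rewriting this sum as an integral against the counting function.

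The key identity is that, for $a>0$ and $r>0$,
\[
 \log\Bigl(1+\frac ra\Bigr)=r\int_0^\infty\frac{\log^+(t/a)}{(r+t)^2}\,\mathrm dt.
\]
To check it, note that the integrand vanishes for $t<a$. Integrating by parts on $[a,\infty)$, with antiderivative $-1/(r+t)+1/r=t/(r(r+t))$ suitably chosen, turns the right side into $r\int_a^\infty \frac{1}{t(r+t)}\,\mathrm dt$. This equals $\int_a^\infty\bigl(\frac1t-\frac1{r+t}\bigr)\,\mathrm dt=\log\frac{r+a}{a}$. The boundary terms vanish because $\log(t/a)=0$ at $t=a$, and at infinity the boundary term is $\log(t/a)\cdot r/(r+t)$ with a compensating choice of antiderivative $-r/(r+t)$. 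So the antiderivative I would use is $-1/(r+t)$ times $r$, and differentiating checks it directly.

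Next, apply Lemma~\ref{lem:Tonelli} with $X=\mathcal I$ carrying counting measure and $Y=(0,\infty)$ carrying Lebesgue measure. Take
\[
 \Phi(\ell,t)=r\log^+(t/|a_\ell|)(r+t)^{-2}\ge0.
\]
This lets us interchange the sum and the integral, and summing the identity over $\ell$ gives
\[
 \sum_\ell\log(1+r/|a_\ell|)=r\int_0^\infty\frac{N_{\yv}(t)}{(r+t)^2}\,\mathrm dt,
\]
using $N_{\yv}(t)=\sum_\ell\log^+(t/|a_\ell|)$. Since $W(\yv)(0)=1$, no zero lies at the origin, so every $|a_\ell|>0$ and this expression for $N_{\yv}$ agrees with the definition. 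Substituting yields \eqref{eq:convolution}.

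There is no real obstacle here. The only points needing care are the integration by parts, including its boundary terms at infinity, and measurability for Tonelli. The latter is trivial because the index set is countable, and both sides may be infinite, although Lemma~\ref{lem:entire-majorant} in fact ensures finiteness.
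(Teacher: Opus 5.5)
Your proof is correct and is essentially the paper's argument: you take logarithms in the majorant of Lemma~\ref{lem:entire-majorant}, verify the kernel identity $r\int_0^\infty\log^+(t/a)(r+t)^{-2}\,\mathrm dt=\log(1+r/a)$ by integration by parts, and sum over the zeros using Lemma~\ref{lem:Tonelli} with counting measure on $\mathcal I$. One cosmetic point: the antiderivative $t/(r(r+t))$ you mention first would make both the boundary term and the remaining integral diverge, so the write-up should use only the antiderivative $-r/(r+t)$ that you eventually adopt.
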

\begin{proof}
The normalization gives $W(\yv)(0)=1$, so no $a_\ell$ is zero.
For $a\ne0$, integration by parts gives
\begin{equation}\label{eq:kernel-identity}
 r\int_0^\infty\frac{\log^+(t/|a|)}{(r+t)^2}\,\mathrm dt
 =\log\left(1+\frac r{|a|}\right).
\end{equation}
Apply Lemma~\ref{lem:Tonelli} with $X=\mathcal I$ equipped with counting
measure, $Y=(0,\infty)$ equipped with Lebesgue measure, and
$\Phi(\ell,t)=r\log^+(t/|a_\ell|)/(r+t)^2$.
Both measure spaces are $\sigma$-finite, and $\Phi$ is nonnegative and
measurable. Summing \eqref{eq:kernel-identity} therefore gives
\[
 \sum_{\ell\in\mathcal I}\log\left(1+\frac r{|a_\ell|}\right)
 =r\int_0^\infty\frac{N_{\yv}(t)}{(r+t)^2}\,\mathrm dt.
\]
The sum is finite by the reciprocal-root bound in
Lemma~\ref{lem:entire-majorant}. Taking logarithms and the maximum over
$j$ in \eqref{eq:entire-majorant} now proves \eqref{eq:convolution}.
\end{proof}

\subsection{A choice of radii for the integral estimate}

We shall use \eqref{eq:convolution} at radii where the integral can be
compared with its value at the base radius.

\begin{lem}\label{lem:envelope}
Let $H\colon(0,\infty)\to[0,\infty)$ be continuous, nondecreasing,
and positive for all sufficiently large arguments.
If $H(r)=o(r^\alpha)$ for some $0<\alpha<1$, there are $r_\nu\to\infty$
such that
\begin{equation}\label{eq:envelope}
 r_\nu\int_0^\infty\frac{H(3t)}{(r_\nu+t)^2}\,\mathrm dt
 \le I_\alpha H(r_\nu),\qquad
 I_\alpha=\int_0^\infty\frac{\max\{1,(3u)^\alpha\}}{(1+u)^2}\,\mathrm du.
\end{equation}
The constants $I_\alpha$ are finite and tend to one as $\alpha\downarrow0$.
\end{lem}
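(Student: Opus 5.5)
The plan is to rescale the integral, pick the radii $r_\nu$ as ``one-sided peaks'' of the normalized function $H(s)s^{-\alpha}$, and then compare the integrand pointwise with the integrand defining $I_\alpha$.

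\step{Step 1. Rescaling.} Substituting $t=ru$ rewrites the left-hand side of \eqref{eq:envelope} as
\[
 r\int_0^\infty\frac{H(3t)}{(r+t)^2}\,\mathrm dt=\int_0^\infty\frac{H(3ru)}{(1+u)^2}\,\mathrm du .
\]
It therefore suffices to find arbitrarily large radii $r$ at which
\[
 H(3ru)\le\max\{1,(3u)^\alpha\}\,H(r)\qquad\text{for every }u>0.
\]

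\step{Step 2. Choice of radii.} Put $\varphi(s)=H(s)s^{-\alpha}$. This function is continuous on $(0,\infty)$, strictly positive for $s\ge s_0$, and tends to $0$ as $s\to\infty$ because $H(s)=o(s^\alpha)$. Fix $R_\nu\uparrow\infty$ with $R_\nu\ge s_0$.

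On $[R_\nu,\infty)$ the function $\varphi$ is positive at $R_\nu$ and tends to zero at infinity. Hence its supremum there is attained at some point $r_\nu\ge R_\nu$. In particular $r_\nu\to\infty$.

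By construction, $\varphi(s)\le\varphi(r_\nu)$ for all $s\ge r_\nu$. Equivalently,
\[
 H(s)\le(s/r_\nu)^\alpha H(r_\nu)\qquad\text{for } s\ge r_\nu .
\]

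\step{Step 3. The pointwise comparison.} There are two ranges of $u$.

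\begin{enumerate}
\item If $3u\le1$, then $3r_\nu u\le r_\nu$, and monotonicity of $H$ gives $H(3r_\nu u)\le H(r_\nu)$.
\item If $3u>1$, then $s=3r_\nu u\ge r_\nu$, and Step~2 gives $H(3r_\nu u)\le(3u)^\alpha H(r_\nu)$.
\end{enumerate}

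Integrating this comparison against $(1+u)^{-2}$ yields \eqref{eq:envelope}.

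\step{Step 4. The constants $I_\alpha$.} For finiteness, note that the integrand of $I_\alpha$ is at most $1$ near $u=0$. For large $u$ it is comparable to $u^{\alpha-2}$, which is integrable at infinity since $\alpha<1$.

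For the limit as $\alpha\downarrow0$, restrict to $\alpha\le1/2$. The integrands are then dominated by the integrable function $\max\{1,(3u)^{1/2}\}(1+u)^{-2}$. They converge pointwise to $(1+u)^{-2}$, whose integral is $1$, so dominated convergence gives $I_\alpha\to1$.

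\step{Main point.} The only delicate step is Step~2: the radii must be chosen as one-sided maxima of $H(s)s^{-\alpha}$ on $[R_\nu,\infty)$. This is a P\'olya-peak-type choice, and it is exactly where the hypothesis $H=o(r^\alpha)$ is used, since it guarantees that the supremum is attained. The remaining steps are routine.
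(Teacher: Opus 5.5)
Your proposal is correct and follows essentially the same route as the paper: you take $r_\nu$ as maximizers of $H(s)s^{-\alpha}$ on $[R_\nu,\infty)$, obtain the pointwise bound $H(3r_\nu u)\le\max\{1,(3u)^\alpha\}H(r_\nu)$ from that maximality together with monotonicity, substitute $t=r_\nu u$, and use dominated convergence for $I_\alpha\to1$. The only difference is cosmetic: you take the dominating function at $\alpha_0=1/2$, whereas the paper uses an arbitrary $\alpha_0<1$.
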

\begin{proof}
Choose $R_\nu\to\infty$ so that $H(R_\nu)>0$. The function
$H(t)/t^\alpha$ attains a maximum on $[R_\nu,\infty)$, because it tends to
zero. Let $r_\nu$ be a maximizing point. For $t\ge r_\nu$,
$H(t)\le H(r_\nu)(t/r_\nu)^\alpha$, while monotonicity controls smaller
$t$. Thus
$H(3r_\nu u)\le H(r_\nu)\max\{1,(3u)^\alpha\}$ for all $u>0$.
The substitution $t=r_\nu u$ proves \eqref{eq:envelope}.
For $0<\alpha\le\alpha_0<1$, its integrand is bounded by the integrable
function $\max\{1,(3u)^{\alpha_0}\}/(1+u)^2$. Dominated convergence gives
$I_\alpha\to\int_0^\infty(1+u)^{-2}\,\mathrm du=1$.
\end{proof}

\subsection{Classification below order one}

A rational normal curve with the parametrization relevant here has the form
\begin{equation}\label{eq:rationalnormal}
 f(z)=[(1,z,\ldots,z^{n})A],\qquad A\in\operatorname{GL}_{n+1}(\C).
\end{equation}
Here $\operatorname{GL}_{n+1}(\C)$ denotes the invertible $(n+1)\times(n+1)$ complex
matrices. This is a statement about the projective curve, not about a particular
reduced representation.

The coordinate construction used here requires only order less than one.
We state it separately so that it can also be applied to the order-zero
problem without a small-ramification hypothesis.

\begin{lem}\label{lem:small-order-coordinates}
Let $f=[\fv]\colon\C\to\PP^n$ be a linearly non-degenerate
holomorphic curve with $\rho(f)<1$. Assume that
$\fv=(f_0,\ldots,f_n)$ is reduced and $f_0(0)\ne0$. There is an entire function $G$
such that the components of $\gv=e^{-G}\fv$ have order at most $\rho(f)$.
Choose $b\in\C$ with $W(\gv)(b)\ne0$, and put
\[
 J_b=(g_j^{(i)}(b))_{0\leq i,j\leq n},\qquad
 \yv(z)=\gv(b+z)J_b^{-1},\qquad B=|b|.
\]
Then $y_j^{(i)}(0)=\delta_{ij}$ for $0\leq i,j\leq n$, and every
component $y_j$ has order at most $\rho(f)$. With $H_{\yv}$ and
$N_{\yv}$ defined in Corollary~\ref{cor:convolution}, there are constants
$C,C_b$ such that
\begin{equation}\label{eq:small-order-T}
 T(s,f)\leq H_{\yv}(s+B)+C\qquad(s\geq1)
\end{equation}
and
\begin{equation}\label{eq:translated-count}
 N_{\yv}(t)\leq N_{W_{\fv}}(t+B)+C_b\qquad(t\geq1).
\end{equation}
\end{lem}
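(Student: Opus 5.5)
The plan is to build $\gv$ from a canonical product for the zeros of one coordinate, and then read off \eqref{eq:small-order-T} and \eqref{eq:translated-count} from the fact that $T$ and the zero divisor of the Wronskian do not depend on the representation.

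\emph{Construction of $G$.} Jensen's formula \eqref{eq:zerojensen} and the pointwise bound $\log|f_0|\le\log\norm{\fv}$ give $N_{f_0}(r)\le T(r,f)+O(1)$. So the zeros of $f_0$ have convergence exponent at most $\rho(f)<1$. Their canonical product $\Pi_0$ of genus zero is then an entire function of order at most $\rho(f)$. We have $\Pi_0(0)\ne0$ because $f_0(0)\ne0$. Since $f_0/\Pi_0$ is entire and zero-free, we may write $f_0/\Pi_0=e^{G}$ with $G$ entire, and set $\gv=e^{-G}\fv$. Then $g_0=\Pi_0$ and $g_j=\Pi_0\,(f_j/f_0)$. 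Each $g_j$ is entire, and
\[
 \TN(r,g_j)\le\TN(r,\Pi_0)+\TN(r,f_j/f_0).
\]
By \eqref{eq:quotientbound} the right-hand side has order at most $\rho(f)$. For entire functions the Nevanlinna order coincides with the maximum-modulus order, so every $g_j$ has order at most $\rho(f)$.

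\emph{Normalization of $\yv$.} Since $W(\gv)=e^{-(n+1)G}W_{\fv}\not\equiv0$, a point $b$ with $W(\gv)(b)\ne0$ exists, and then $J_b$ is invertible. Differentiating $\yv(z)=\gv(b+z)J_b^{-1}$ gives $\yv^{(i)}(0)=\gv^{(i)}(b)J_b^{-1}$, which is the $i$th row of $J_bJ_b^{-1}=I$. Translation and a constant linear change of coordinates do not increase the order, so each $y_j$ has order at most $\rho(f)$.

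\emph{Proof of \eqref{eq:small-order-T}.} The characteristic may be computed from any representation, since $\gv$ differs from $\fv$ by the zero-free factor $e^{-G}$. For $|z|=s$ we have $\gv(z)=\yv(z-b)J_b$ and $|z-b|\le s+B$. Hence
\[
 \log\norm{\gv(z)}\le\log\norm{J_b}+\tfrac12\log(n+1)+H_{\yv}(s+B).
\]
Averaging over $|z|=s$ and subtracting $\log\norm{\gv(0)}$ gives \eqref{eq:small-order-T}.

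\emph{Proof of \eqref{eq:translated-count}.} We have $W(\yv)(z)=W(\gv)(b+z)/\det J_b$, so the zeros of $W(\yv)$ are the points $a-b$, where $a$ runs over the zeros of $W_{\fv}$ with multiplicity. Therefore
\[
 N_{\yv}(t)=\sum_{|a-b|\le t}\log\frac t{|a-b|}.
\]
Each zero with $|a-b|\le t$ satisfies $|a|\le t+B$. For $t\ge1$ we split
\[
 \log\frac t{|a-b|}=\log\frac{t+B}{|a|}+\log\frac t{t+B}+\log\frac{|a|}{|a-b|}.
\]
The first terms sum to at most $N_{W_{\fv}}(t+B)$; there is a harmless adjustment if $W_{\fv}(0)=0$, absorbed through the $n_1(0)\log r$ convention. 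The second terms are nonpositive. For the third terms we use $|a|/|a-b|\le1+B/|a-b|$.

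The zeros with $|a-b|<1$ are finitely many, and they are bounded away from $0$ since $W(\gv)(b)\ne0$; they contribute a constant. The remaining zeros contribute at most $B\sum|a-b|^{-1}$. This sum is finite by Lemma~\ref{lem:entire-majorant} applied to $\yv$, whose Wronskian zeros are exactly the points $a-b$. Together these give the constant $C_b$.

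The main obstacle is this last comparison between translated counting functions. A naive comparison of the counting functions $n$ loses a factor of order $\log t$. The reciprocal-root summability, available only because $\rho(f)<1$, is what turns the loss into a bounded constant.
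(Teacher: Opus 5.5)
Your proof is correct and follows the paper's argument. You use a genus-zero product for the zeros of $f_0$ to construct the gauge, compare norms under the translation and basis change to get \eqref{eq:small-order-T}, and compare $\log^+(t/|a-b|)$ termwise with $\log^+((t+B)/|a|)$ plus a summable error to get \eqref{eq:translated-count}, exactly as the paper does. The differences are cosmetic: you cite Borel's theorem for the order of the canonical product where the paper estimates $\log M(r,P)$ by hand, and you bound the error by $B/|a-b|$ using summability over the translated zeros rather than by $2B/|a|$ for $|a|>2B$.
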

\begin{proof}
\step{Step 1. Entire coordinates of order less than one.}
For every $\rho(f)<\beta<1$, Jensen's formula and $f_0(0)\ne0$ give
$N_{f_0}(r)\le T(r,f)+O(1)=O(r^\beta)$ and
$n_{f_0}(r)=O(r^\beta)$. Therefore the genus-zero product
$P(z)=\prod_{f_0(a)=0}(1-z/a)$ converges, with zeros repeated according to
multiplicity. Splitting its roots at $|a|=r$ gives
\begin{equation}\label{eq:canonical-product-growth}
 \log M(r,P)\le n_{f_0}(r)\log2+N_{f_0}(r)
                     +r\sum_{|a|>r}|a|^{-1}=O(r^\beta).
\end{equation}
The tail estimate follows by integrating $n_{f_0}(t)/t^2$.

Write $f_0=Pe^G$ with $G$ entire and put $\gv=e^{-G}\fv$.
Each $g_j=P(f_j/f_0)$ is entire. The scalar characteristic inequality,
\eqref{eq:quotientbound}, and \eqref{eq:canonical-product-growth} imply
$\TN(r,g_j)=O(r^\beta)$. The Poisson estimate then gives
$\log M(r,g_j)\le3\TN(2r,g_j)=O(r^\beta)$.
The product $P$, and hence this representation, is independent of $\beta$.
Letting $\beta\downarrow\rho(f)$ shows that all $g_j$ have order at most
$\rho(f)$. The zero-free gauge preserves both the curve and the Wronskian
zero divisor.

\step{Step 2. Initial derivatives and translated zeros.}
Choose $b$ with $W(\gv)(b)\ne0$ and put $B=|b|$. Let
$J_b=(g_j^{(i)}(b))_{0\leq i,j\leq n}$ and set
$\yv(z)=\gv(b+z)J_b^{-1}$. The matrix $J_b$ is invertible and
$y_j^{(i)}(0)=\delta_{ij}$. Constant basis changes and translations preserve
order less than one, so Lemma~\ref{lem:entire-majorant} applies. Norm
equivalence under the basis change gives
\eqref{eq:small-order-T}.
Also $H_{\yv}\ge0$, because $y_0(0)=1$, and
$H_{\yv}(r)\ge\log r$ for $r\ge1$, because $y_1'(0)=1$.

The Wronskian $W(\gv)$ has order less than one, so its nonzero zeros
satisfy $\sum_a|a|^{-1}<\infty$. The zeros of $W(\yv)$ are $a-b$.
For $|a|>2B$,
\[
 \log^+\frac t{|a-b|}
 \le\log^+\frac{t+B}{|a|}+\log^+\frac{|a|}{|a-b|}
 \le\log^+\frac{t+B}{|a|}+\frac{2B}{|a|}.
\]
The error terms are summable. Each of the finitely many remaining nonzero
zeros contributes at most its term in $N_{W_{\fv}}(t+B)$ plus a constant.
A zero at zero is treated by its multiplicity times $\log(t+B)$; no zero
is translated to zero, by the choice of $b$. Hence
\eqref{eq:translated-count}.
\end{proof}

We now add the small-ramification hypothesis to this construction.

\begin{prop}\label{prop:small-order}
Let $f\colon\C\to\PP^n$ be a linearly non-degenerate holomorphic curve
with $\rho(f)<1$ and $N_1(r,f)=o(T(r,f))$. Then $f$ has the form
\eqref{eq:rationalnormal}.
\end{prop}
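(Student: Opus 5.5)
We argue by contradiction via the integral estimate of Corollary~\ref{cor:convolution}. Apply Lemma~\ref{lem:small-order-coordinates} to obtain the normalized entire system $\yv$ with $y_j^{(i)}(0)=\delta_{ij}$, components of order less than one, and the bounds \eqref{eq:small-order-T} and \eqref{eq:translated-count}. If $W(\yv)$ has no zeros, then $W(\yv)\equiv1$ (it is zero-free of order less than one with value $1$ at the origin). In that case the fundamental operator has entire coefficients given by Cramer's rule. Expressed via the majorant \eqref{eq:entire-majorant} with an empty product, we get $|y_j(z)|\le|z|^j/j!$. Liouville's theorem then makes each $y_j$ a polynomial of degree at most $j$, hence $y_j=z^j/j!$. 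Undoing the translation and the matrix $J_b$ gives the form \eqref{eq:rationalnormal}. So the task is to show that the small-ramification hypothesis forces $W(\yv)$ to be zero-free.

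Suppose instead that $W(\yv)$ has a zero, so $N_{\yv}(t)\ge\log t-O(1)$ eventually and $N_{\yv}\to\infty$. We compare $T$ with $N_{\yv}$. From \eqref{eq:translated-count} and $N_1=o(T)$ we have $N_{\yv}(t)\le\epsilon(t)T(t+B)+C_b$ with $\epsilon(t)\to0$. Put $H(r)=H_{\yv}(r)$, which is continuous and nondecreasing, with $H(r)=o(r^\alpha)$ for some $\alpha<1$. Lemma~\ref{lem:envelope} supplies radii $r_\nu\to\infty$ at which the doubled-argument integral is controlled by $I_\alpha H(r_\nu)$. Combining \eqref{eq:small-order-T} at $s=t+B$ with $t+2B\le3t$ for large $t$, we get $N_{\yv}(t)\le\epsilon'(t)H(3t)+C'$ for $t\ge t_0$. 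Inserting this into \eqref{eq:convolution} gives
\[
 H(r_\nu)\le n\log r_\nu+C''+r_\nu\int_0^{\infty}\frac{\epsilon'(t)H(3t)}{(r_\nu+t)^2}\,\mathrm dt .
\]
We split the integral at a large fixed $t_1$. The part $t\le t_1$ contributes $O(1/r_\nu)\cdot H(3t_1)\to0$. The part $t>t_1$ is at most $(\sup_{t>t_1}\epsilon')I_\alpha H(r_\nu)$. Hence $H(r_\nu)(1-o(1))\le n\log r_\nu+O(1)$.

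It remains to show this contradicts transcendence; this is the main obstacle. Because $y_j=\sum_i(\text{const})g_i(b+\cdot)$, $H$ and $T$ are comparable up to additive constants and translation: $T(s)\le H(s+B)+C$ and conversely $H(r)\le 3T(2(r+B))+O(1)$ by the Poisson estimate. Transcendence gives $T(r)/\log r\to\infty$, and we must transfer this to $H(r_\nu)/\log r_\nu\to\infty$. Since $H\ge T(\cdot-B)-C$ pointwise and $T(r)/\log r\to\infty$ along all $r$, we get $H(r_\nu)/\log r_\nu\to\infty$, which contradicts the displayed bound. Thus $f$ cannot be transcendental, i.e.\ $T=O(\log r)$.

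For a rational curve we need the finer statement that the Wronskian is zero-free. I would treat this case separately: for polynomial coordinates, $N_1(r)=M\log r+O(1)$ and $T(r)=d\log r+O(1)$, so $N_1=o(T)$ forces $M=0$, and the first paragraph applies. The delicate points are choosing $\alpha$ small enough that $I_\alpha$ is finite (any $\alpha\in(\rho(f),1)$ works, since only finiteness of $I_\alpha$ is needed once $\epsilon'\to0$), and handling the error $C_b$ and the low range of $t$ uniformly in $\nu$.
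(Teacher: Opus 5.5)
Your argument is correct and uses the same machinery as the paper's proof: the normalized system of Lemma~\ref{lem:small-order-coordinates}, the majorant $N_{\yv}(t)\le\varepsilon H_{\yv}(3t)+C_\varepsilon$, and absorption in \eqref{eq:convolution} at the radii of Lemma~\ref{lem:envelope}. The difference is the last step, and your version is longer than it needs to be. The paper fixes $\varepsilon I_\alpha<1/(n+1)$, so that $M(r_\nu,y_j)\le Cr_\nu^{\gamma}$ with $\gamma<n+1$; Cauchy's estimate then gives $\deg y_j\le n$ and hence $y_j=z^j/j!$ directly. You instead go through transcendence, the degree count forcing a constant Wronskian for rational curves, and the empty-product majorant; this is valid but unnecessary, and your assumption that $W(\yv)$ has a zero is never used.
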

\begin{proof}
\step{Step 1. The normalized entire system.}
Let $G,\gv,b,B$, and $\yv$ be obtained by the construction in the proof of
Lemma~\ref{lem:small-order-coordinates}, and let $H_{\yv},N_{\yv}$ be
as in Corollary~\ref{cor:convolution} for this $\yv$.
From $N_1=o(T)$, \eqref{eq:small-order-T}, and
\eqref{eq:translated-count}, for every $\varepsilon>0$ we obtain
\begin{equation}\label{eq:small-order-divisor-majorant}
 N_{\yv}(t)\le\varepsilon H_{\yv}(3t)+C_\varepsilon,
 \qquad t>0.
\end{equation}
Here $t+2B\le3t$ for large $t$, and a constant covers the remaining bounded
interval.

\step{Step 2. Polynomial growth on a sequence of radii.}
Choose $\alpha\in(0,1)$ larger than the component orders. Then
$H_{\yv}(r)=o(r^\alpha)$, and Lemma~\ref{lem:envelope} supplies radii
$r_\nu$. Substituting \eqref{eq:small-order-divisor-majorant} into
\eqref{eq:convolution} and using \eqref{eq:envelope} gives
\begin{equation}\label{eq:absorption}
 (1-\varepsilon I_\alpha)H_{\yv}(r_\nu)
 \le n\log r_\nu+C_\varepsilon.
\end{equation}
The constant term uses $r_\nu\int_0^\infty(r_\nu+t)^{-2}\,\mathrm dt=1$.
Choose $0<\varepsilon I_\alpha<1/(n+1)$ and put
$\gamma=n/(1-\varepsilon I_\alpha)<n+1$.
Equation \eqref{eq:absorption} yields
$M(r_\nu,y_j)\le C r_\nu^\gamma$. For every integer $k\ge n+1$, Cauchy's
estimate gives
$|y_j^{(k)}(0)|/k!\le Cr_\nu^{\gamma-k}\to0$.
Thus each $y_j$ is a polynomial of degree at most $n$, and the initial
conditions force $y_j(z)=z^j/j!$. Undoing the translation and the basis
change gives \eqref{eq:rationalnormal}.
\end{proof}

\subsection{The Wronskian counting function at order zero}

The same integral estimate gives a quantitative result without the
assumption $N_1=o(T)$.

\begin{prop}\label{prop:zero-order-ramification}
For a transcendental linearly non-degenerate holomorphic curve $f$ of order
zero,
\begin{equation}\label{eq:zero-order-ramification}
 \limsup_{r\to\infty}\frac{N_1(r,f)}{T(r,f)}\ge1.
\end{equation}
\end{prop}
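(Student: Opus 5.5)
Argue by contradiction, reusing the machinery of Proposition~\ref{prop:small-order}. Suppose $\limsup N_1(r,f)/T(r,f)<1$. Then there are $\theta<1$ and $r_0$ with $N_1(r,f)\le\theta T(r,f)$ for $r\ge r_0$. Since $\rho(f)=0<1$, Lemma~\ref{lem:small-order-coordinates} supplies the normalized entire system $\yv$, with components of order zero. It also gives \eqref{eq:small-order-T} and \eqref{eq:translated-count}. Combining these as in Step~1 of the proof of Proposition~\ref{prop:small-order}, I obtain
\[
 N_{\yv}(t)\le\theta\,H_{\yv}(3t)+C,\qquad t>0,
\]
with the constant absorbing the bounded range of $t$ and the shift $t+B$ versus $t+2B\le3t$.

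Next I apply Lemma~\ref{lem:envelope}. The components have order zero, so $H_{\yv}(r)=o(r^\alpha)$ for every $\alpha\in(0,1)$. For each such $\alpha$ the lemma gives radii $r_\nu\to\infty$. Inserting the displayed bound into \eqref{eq:convolution} and using \eqref{eq:envelope} gives
\[
 (1-\theta I_\alpha)H_{\yv}(r_\nu)\le n\log r_\nu+C.
\]
Since $I_\alpha\to1$ as $\alpha\downarrow0$ and $\theta<1$, I fix $\alpha$ so small that $\theta I_\alpha<1$. The left coefficient is then a positive constant $c$, so $H_{\yv}(r_\nu)\le c^{-1}n\log r_\nu+C'$. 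Cauchy's estimates along $r_\nu\to\infty$ then show that every $y_j$ is a polynomial. Undoing the translation and the basis change, the curve $f$ is rational, contradicting transcendence. Note that polynomial degree is all I need here; I do not require the sharper degree-$n$ conclusion used in Proposition~\ref{prop:small-order}.

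I expect no serious obstacle. The main care point is the first step, where $N_1(r,f)\le\theta T(r,f)$ holds only for large $r$ and involves the shift by $B$. I need $T(t+B)\le H_{\yv}(t+2B)+C\le H_{\yv}(3t)+C$ for $t\ge2B$, which follows from monotonicity of $H_{\yv}$. For small $t$, $N_{\yv}$ is bounded, so the constant covers it. Also, the resulting bound $H_{\yv}(r_\nu)=O(\log r_\nu)$ yields polynomial growth of the components along the sequence $r_\nu$, and this alone suffices for the contradiction.
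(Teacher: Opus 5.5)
Your proposal is correct and follows the paper's proof step by step. You assume $N_{W_{\fv}}(r)\le cT(r,f)$ with $c<1$ for large $r$, and use Lemma~\ref{lem:small-order-coordinates} to obtain $N_{\yv}(t)\le cH_{\yv}(3t)+C$. You then fix $\alpha$ with $cI_\alpha<1$ and combine Lemma~\ref{lem:envelope} with \eqref{eq:convolution} to get $H_{\yv}(r_\nu)=O(\log r_\nu)$, so that Cauchy's estimate makes every $y_j$ a polynomial and $f$ rational. Your treatment of the shift by $B$ (monotonicity of $H_{\yv}$, with bounded $t$ absorbed into the constant) fills in exactly what the paper leaves implicit.
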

\begin{proof}
Suppose otherwise and choose $0<c<1$ with
$N_{W_{\fv}}(r)\le cT(r,f)$ for all sufficiently large $r$.
Apply Lemma~\ref{lem:small-order-coordinates} to this curve. Let
$G,\gv,b,B$, and $\yv$ be the objects constructed in its proof, and
use $H_{\yv},N_{\yv}$ from Corollary~\ref{cor:convolution} for this
normalized system. Since $\rho(f)=0$, every component of $\yv$ has order
zero. Equations
\eqref{eq:small-order-T} and \eqref{eq:translated-count} give
$N_{\yv}(t)\le cH_{\yv}(3t)+C$ for all $t>0$.
Choose $0<\alpha<1$ so small that $cI_\alpha<1$.
Since $H_{\yv}(r)=o(r^\alpha)$, Lemma~\ref{lem:envelope} and
\eqref{eq:convolution} imply
$(1-cI_\alpha)H_{\yv}(r_\nu)\le n\log r_\nu+C$ on a sequence tending
to infinity. Thus $M(r_\nu,y_j)\le C_1r_\nu^\Gamma$ with
$\Gamma=n/(1-cI_\alpha)<\infty$.
Cauchy's estimate makes every Taylor coefficient of integer degree greater
than $\Gamma$ vanish. All $y_j$ are polynomials, so $f$ is rational, a
contradiction. This proves \eqref{eq:zero-order-ramification} and
Corollary~\ref{cor:zero-ratio}.
\end{proof}

\section{Regular variation}\label{sec:regular}

We now assume that $N_1(r,f)=o(T(r,f))$ and that the two Drasin--Shea
indices coincide at a positive value $\rho\in\Gn$. Write $T(r)=T(r,f)$.
We shall prove that every sequence of normalized characteristics has a
subsequence converging to the same power function.

\subsection{Limits along arbitrary radii}\label{sec:arbitrary-limits}

Fix an arbitrary sequence $r_\nu\to\infty$, and put $s_\nu=T(r_\nu)$.
For $0<\epsilon<\rho$, set
$M_\epsilon(R)=\max\{R^{\rho-\epsilon},R^{\rho+\epsilon}\}$.
Lemma~\ref{lem:power-bounds} gives
\begin{equation}\label{eq:regular-scale-bound}
 T(256Rr_\nu)\leq C_\epsilon256^{\rho+\epsilon}
 M_\epsilon(R)s_\nu
\end{equation}
for each fixed $R>0$ and all sufficiently large $\nu$.
The constant is independent of $R$. The conditions
$\log(Rr_\nu)=o(M_\epsilon(R)s_\nu)$ and
$N_{W_{\fv}}(256Rr_\nu)=o(M_\epsilon(R)s_\nu)$ follow from
transcendence, the small-ramification hypothesis, and
\eqref{eq:regular-scale-bound}. Thus Proposition~\ref{prop:representation}
applies with scale $Rr_\nu$ and normalization $M_\epsilon(R)s_\nu$.

Define
\begin{equation}\label{eq:arbitrary-scaled-coefficients}
 A_{q,\nu}(z)=\left(\frac{r_\nu}{s_\nu}\right)^qQ_q(r_\nu z),
 \qquad 2\leq q\leq n+1.
\end{equation}
The rescaling argument in \eqref{eq:peak-rescaling-relation}, now with
$M_\epsilon(R)$ in place of $R^\mu$, and a diagonal extraction over
$R=1,2,\ldots$ give entire functions $a_q$ such that
\begin{equation}\label{eq:arbitrary-coefficients}
 A_{q,\nu}\convmeas a_q\quad\text{locally on }\C,
 \qquad
 \sup_{|z|\leq R}|a_q(z)|
 \leq K_\epsilon\max\{R^{q(\rho-1-\epsilon)},
                         R^{q(\rho-1+\epsilon)}\}.
\end{equation}
Here $K_\epsilon$ is independent of $R$. Once the subsequence has been
chosen, the bound in \eqref{eq:arbitrary-coefficients} holds for every
$\epsilon\in(0,\rho)$: apply Proposition~\ref{prop:representation} again at
a fixed scale and use uniqueness of the limit in measure.

Cauchy's coefficient estimate, with $R\downarrow0$ and $R\to\infty$,
shows that the coefficient of $z^m$ in $a_q$ vanishes unless
$m=q(\rho-1)$. To avoid nonintegral powers in expressions on $\C$, put
\begin{equation}\label{eq:monomial-coefficients}
 J_\rho=\{q\in\{2,\ldots,n+1\}:q(\rho-1)\in\Z_{\geq0}\},
 \qquad
 a_q(z)=
 \begin{cases}
 c_qz^{q(\rho-1)},&q\in J_\rho,\\
 0,&q\notin J_\rho.
 \end{cases}
\end{equation}
Indeed, if $m>q(\rho-1)$, choose $\epsilon$ so small that
$m>q(\rho-1+\epsilon)$ and let $R\to\infty$ in Cauchy's estimate.
If $m<q(\rho-1)$, choose $m<q(\rho-1-\epsilon)$ and let $R\downarrow0$.

Apply Proposition~\ref{prop:representation} with $t_\nu=r_\nu$ and
normalization $s_\nu$. Let $H_\nu,\gv_\nu$, and $P_\nu$ be the
holomorphic functions, local representations, and monic Wronskians
constructed in Step~1 of its proof. The upper bounds and center
values in \eqref{eq:representationbounds}, followed by
Lemma~\ref{lem:subharmonic-compactness}, give, on a further subsequence,
$s_\nu^{-1}\log|g_{\nu,j}|\to u_j$ in $L^1_{\loc}(D_4)$, where no
$u_j$ is identically $-\infty$. By \eqref{eq:norm-max},
\begin{equation}\label{eq:norm-limit}
 U_\nu:=s_\nu^{-1}\log\norm{\gv_\nu}
 \longrightarrow U:=\max_{0\leq j\leq n}u_j
 \quad\text{in }L^1_{\loc}(D_4).
\end{equation}
The convergence \eqref{eq:small-polynomial-log} and Lemma~\ref{lem:sum}
give $\sum_j u_j\geq0$ almost everywhere. In particular, $U\geq0$
almost everywhere, and hence everywhere for its subharmonic
representative. Equation~\eqref{eq:meanidentity} gives
\begin{equation}\label{eq:radial-mean}
 \mean t{U_\nu}=\frac{T(tr_\nu)}{T(r_\nu)}+o(1),
 \qquad 0<t<4,
\end{equation}
with an error independent of $t$.

We need some information on the behavior of $U$ at the origin.
We integrate \eqref{eq:radial-mean}
with weight $2\pi t\,\mathrm dt$ over $(0,R)$, where $R<1/2$.
For $tr_\nu\geq r_\epsilon$, use the upper bound
$C_\epsilon t^{\rho-\epsilon}$ from \eqref{eq:power-bounds}; on the
remaining interval use $T(tr_\nu)\leq T(r_\epsilon)$. Passing to the
limit by \eqref{eq:norm-limit} yields
$\int_{D_R}U\,\dA\leq C_\epsilon R^{2+\rho-\epsilon}$.
Since $U\geq0$, the submean inequality on $D(z,|z|)\subset D_{2|z|}$
then gives
\begin{equation}\label{eq:origin-bound}
 0\leq U(z)\leq C_\epsilon|z|^{\rho-\epsilon}
 \quad (|z|<1/4),\qquad U(0)=0.
\end{equation}
The assertion at $0$ follows by applying the submean inequality on
$D_R$ and letting $R\downarrow0$.

Apply Lemma~\ref{lem:replacement} to these same $\gv_\nu,P_\nu$,
with one fixed sufficiently large approximation exponent $A$. Let
$\pv_\nu$ be the Taylor polynomials specified in that lemma and let
$\eta_\nu$ be its separating radii.
Their normalized logarithmic norms have the same limit:
\begin{equation}\label{eq:polynomial-norm}
 s_\nu^{-1}\log\norm{\pv_\nu}\longrightarrow U
 \quad\text{in }L^1_{\loc}(D_4).
\end{equation}
To see this, fix $0<\eta<A$. By \eqref{eq:norm-limit} and $U\geq0$,
the set where $\norm{\gv_\nu}<e^{-\eta s_\nu}$ has area tending to zero
on each compact subset. On its complement,
\eqref{eq:taylorerror} gives
$\norm{\pv_\nu-\gv_\nu}/\norm{\gv_\nu}
 \leq\sqrt{n+1}\,e^{-(A-\eta)s_\nu}$.
Thus the two logarithmic norms, divided by $s_\nu$, have the same
limit in measure. Their local upper bounds and
Lemma~\ref{lem:subharmonic-compactness} give
\eqref{eq:polynomial-norm}.

Write $R_\nu=W(\pv_\nu)$ and let
$L_{\pv_\nu}=D^{n+1}+\sum_{q=1}^{n+1}b^p_{q,\nu}D^{n+1-q}$ be
its fundamental operator. Retain the functions
$U_{\nu,\mathrm{in}},U_{\nu,\mathrm{out}}$ specified in
Lemma~\ref{lem:replacement} for these $R_\nu,\eta_\nu$, and put
$B_\nu(a)=U_{\nu,\mathrm{in}}(a)+U_{\nu,\mathrm{out}}(a)$, the sum
of $|a-\zeta|^{-1}$ over all zeros of $R_\nu$, with multiplicity.
By \eqref{eq:replacement-data} and the reciprocal-root bounds in
Lemma~\ref{lem:replacement}, we can pass to a subsequence and choose a
set $E\subset D_2\setminus\{0\}$ of full measure such that
\begin{equation}\label{eq:good-centers}
 \frac{\log|R_\nu(a)|}{s_\nu}\longrightarrow0,
 \qquad
 \limsup_{\nu\to\infty}\frac{B_\nu(a)}{s_\nu}<\infty,
 \qquad a\in E.
\end{equation}
Here the contribution of the interior roots, divided by $s_\nu$,
tends to zero almost everywhere on a subsequence; the exterior
contribution is uniformly bounded. Removing the countable union of
the zero sets of $R_\nu$ ensures that $R_\nu(a)\ne0$ for all $a\in E$
and all $\nu$.

\subsection{A unitary change of basis at a point}

The next lemma chooses component limits whose sum vanishes at a
prescribed point. The limiting norm $U$ does not change.

\begin{lem}\label{lem:basis-at-point}
Let $r_\nu,s_\nu,\pv_\nu,R_\nu,U$, and $E$ be the objects constructed
in Section~\ref{sec:arbitrary-limits}. Thus $s_\nu=T(r_\nu)$,
$R_\nu=W(\pv_\nu)$,
$U$ is the limit in \eqref{eq:polynomial-norm}, and $E$ is the full-measure
set chosen in \eqref{eq:good-centers}, with $R_\nu(a)\ne0$ for every
$a\in E$ and every $\nu$. For each $a\in E$, there are a further
subsequence and constant unitary $(n+1)\times(n+1)$ matrices $V_\nu$
such that the components of $\hv_\nu=\pv_\nu V_\nu$ satisfy
$s_\nu^{-1}\log|h_{\nu,j}|\to v_j$ in $L^1_{\loc}(D_4)$, where the
$v_j$ are subharmonic and not identically $-\infty$, and
\begin{equation}\label{eq:point-balance}
 U=\max_{0\leq j\leq n}v_j,
 \qquad \sum_{j=0}^{n}v_j\geq0\ \text{almost everywhere},
 \qquad \sum_{j=0}^{n}v_j(a)=0.
\end{equation}
\end{lem}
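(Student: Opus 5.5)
The plan is to pick $V_\nu$ so that at the point $a$ the vector of initial derivatives of $\hv_\nu$ is in upper triangular (flag) position. Then the product of the diagonal entries equals the Wronskian $R_\nu(a)$ up to a unimodular factor, and the remaining quantities are controlled by the minor estimate \eqref{eq:minor-es-bound} together with \eqref{eq:good-centers}.

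\emph{Choice of $V_\nu$.} Fix $a\in E$ and set $J_\nu=(p_{\nu,j}^{(i)}(a))_{0\le i,j\le n}$, which is invertible because $R_\nu(a)\ne0$. Apply a QR (Gram--Schmidt) decomposition to the rows of $J_\nu$, starting from row $0$. This gives a unitary $V_\nu$ such that $J_\nu V_\nu$ is lower triangular: $h_{\nu,j}^{(i)}(a)=0$ for $i<j$. Unitarity keeps $\norm{\hv_\nu}=\norm{\pv_\nu}$, so \eqref{eq:polynomial-norm} still holds. The Wronskian changes only by the factor $\det V_\nu$, of modulus one, so it is still $R_\nu$ up to a unimodular constant. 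Moreover, $|\det J_\nu V_\nu|=|R_\nu(a)|=\prod_j|h_{\nu,j}^{(j)}(a)|$.

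\emph{Convergence.} The $s_\nu^{-1}\log|h_{\nu,j}|$ are bounded above locally by $s_\nu^{-1}\log\norm{\pv_\nu}$. Lemma~\ref{lem:subharmonic-compactness} then gives a subsequence that converges in $L^1_{\loc}$ to some $v_j$ or tends to $-\infty$ locally uniformly. To exclude $-\infty$, I need a lower bound at $a$. The function $h_{\nu,j}$ vanishes to order $j$ at $a$, so I would estimate $\log|h_{\nu,j}|$ on a small circle around $a$ through its Taylor expansion. Its leading coefficient is $h^{(j)}_{\nu,j}(a)/j!$, and the higher coefficients are bounded by applying Proposition~\ref{prop:initial-basis} to $V=\operatorname{span}\pv_\nu$. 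This yields
\[
 |h_{\nu,j}(a+z)|\le\Bigl(\sum_{i\ge j}|h^{(i)}_{\nu,j}(a)||z|^i/i!\Bigr)e^{|z|B_\nu(a)},
\]
with $B_\nu(a)=O(s_\nu)$ by \eqref{eq:good-centers}. I also need an upper bound: each entry satisfies $|h^{(i)}_{\nu,j}(a)|\le\norm{J_\nu}\le e^{O(s_\nu)}$ by Cauchy estimates, since $\sup\log\norm{\pv_\nu}=O(s_\nu)$. Combining the product identity with $s_\nu^{-1}\log|R_\nu(a)|\to0$, I get $\sum_j s_\nu^{-1}\log|h_{\nu,j}^{(j)}(a)|\to0$. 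Each term is bounded above, hence each is also bounded below, so $\log|h^{(j)}_{\nu,j}(a)|\ge -O(s_\nu)$. Jensen's formula on a small disc around $a$ then shows the sequence cannot tend to $-\infty$, so every $v_j$ is a genuine subharmonic limit.

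\emph{The three conclusions.} The identity $U=\max_j v_j$ follows from \eqref{eq:norm-max} and \eqref{eq:polynomial-norm}. The inequality $\sum v_j\ge0$ a.e.\ is Lemma~\ref{lem:sum}, applied with $W(\hv_\nu)$ of the same modulus as $R_\nu$, whose normalized logarithm tends to $0$ in measure by \eqref{eq:replacement-data}.

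The main obstacle is the pointwise equality $\sum_j v_j(a)=0$. The inequality $\ge0$ at $a$ follows from the a.e.\ inequality and the submean property. For $\le$, I would show $v_j(a)\le\lim s_\nu^{-1}\log|h^{(j)}_{\nu,j}(a)|$ (after passing to a subsequence on which these limits exist); summing then gives $\le0$. To prove this, use the majorant above on $|z|=\delta$: $s_\nu^{-1}\log|h_{\nu,j}(a+z)|\le\max_{i\ge j}s_\nu^{-1}\log|h_{\nu,j}^{(i)}(a)|+O(\delta)$. This bounds $v_j$ on small circles via Lemma~\ref{lem:subharmonic-compactness}. The difficulty is that the higher derivatives $h^{(i)}_{\nu,j}(a)$ with $i>j$ might be exponentially larger than the diagonal one. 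I would control them with the flag normalization: replacing $h_{\nu,j}$ by the basis element $\psi_j$ normalized at $a$ gives the factor $|h_{\nu,j}^{(j)}(a)|$ times the minors in \eqref{eq:minor-es-bound}, which are $e^{O(\delta s_\nu)}$ on radius $\delta$. The remaining problem is the off-diagonal contributions from $\psi_i$ with $i>j$. If the triangular structure alone does not control these, I would fall back to choosing $V_\nu$ by a weighted (successive-minima) reduction, so that off-diagonal entries are dominated by the corresponding diagonal ones. Letting $\delta\downarrow0$ then yields $v_j(a)\le\lim s_\nu^{-1}\log|h^{(j)}_{\nu,j}(a)|$, which completes the proof.
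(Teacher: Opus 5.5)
Your proposal has a genuine gap, and it sits exactly at the step you flag. The lower-triangular (QR) choice of $V_\nu$ cannot give $\sum_j v_j(a)=0$ in general, so the inequality $v_j(a)\le\lim s_\nu^{-1}\log|h^{(j)}_{\nu,j}(a)|$ you aim for is false.

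The value $v_j(a)$ is governed by the whole column $(h^{(i)}_{\nu,j}(a))_{0\le i\le n}$, not by its diagonal entry:
\begin{itemize}
\item The estimate \eqref{eq:initial-value-bound} involves only derivatives of order at most $n$; those of higher order are already absorbed into $e^{|z|B_\nu(a)}$, so they are not the difficulty. It gives $v_j(a)\le\limsup_\nu s_\nu^{-1}\log\max_{i\le n}|h^{(i)}_{\nu,j}(a)|$.
\item Cauchy's estimate together with Lemma~\ref{lem:subharmonic-compactness} gives the reverse inequality.
\end{itemize}
Hence $\sum_j v_j(a)$ is the exponential rate of the product of the column lengths of $J_\nu V_\nu$. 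By Hadamard's inequality this product is at least $|R_\nu(a)|$, with equality on the exponential scale only if the columns are essentially orthogonal. Triangularity only makes the product of the diagonal entries equal to $|R_\nu(a)|$. The subdiagonal entries $h^{(i)}_{\nu,j}(a)$, $j<i\le n$, can be exponentially larger.

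A local model with $n=1$ shows this. Put $y_\pm(z)=e^{\pm s(z-a)}$ and
\[
 p_0=e^{cs}y_+-(e^{cs}-e^{-cs})y_-,\qquad p_1=e^{-cs}y_-,\qquad c>0.
\]
Then $W(p_0,p_1)=-2s$, with no Wronskian zeros, and
\[
 \pv(a)=(e^{-cs},e^{-cs}),\qquad s^{-1}\pv'(a)=(2e^{cs}-e^{-cs},-e^{-cs}).
\]
Your triangular basis then satisfies:
\begin{itemize}
\item $|h_0(a)|=\sqrt2e^{-cs}$ and $|h_0'(a)|/s\approx\sqrt2e^{cs}$;
\item $h_1(a)=0$ and $|h_1'(a)|/s=\sqrt2e^{cs}$.
\end{itemize}
Both $h_0$ and $h_1$ are combinations of $y_\pm$ with coefficients of modulus about $e^{cs}$. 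Therefore $v_0=v_1=c+|\Ree(z-a)|=U$, and $\sum_jv_j(a)=2c>0$, whereas $\lim s^{-1}\log|h_0(a)|=-c<v_0(a)$. Nothing in \eqref{eq:good-centers} excludes such a configuration at a point of $E$.

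Your fallback (a ``successive-minima'' reduction) points the right way, but it is unspecified. The paper makes it exact with a singular value decomposition:
\begin{itemize}
\item Choose $V_\nu$ so that the columns of $J_\nu(a)V_\nu$ are orthogonal, where $J_\nu(a)=(s_\nu^{-i}p^{(i)}_{\nu,j}(a))$. Call their lengths $\sigma_{\nu,j}$.
\item Every entry of column $j$ is then at most $\sigma_{\nu,j}$, and Hadamard's inequality is an equality: $\prod_j\sigma_{\nu,j}=s_\nu^{-\kappa}|R_\nu(a)|$.
\item Setting $\lambda_j=\lim s_\nu^{-1}\log\sigma_{\nu,j}$ along a subsequence gives $\sum_j\lambda_j=0$.
\item Inequality \eqref{eq:initial-value-bound} gives $v_j(a)\le\lambda_j$, and \eqref{eq:singular-value-cauchy} with Lemma~\ref{lem:subharmonic-compactness} gives $\lambda_j\le v_j(a)$.
\end{itemize}
The rest of your proposal carries over unchanged to this choice of $V_\nu$: the exclusion of the limit $-\infty$ via Cauchy's estimate at $a$, the identity $U=\max_jv_j$, and $\sum_jv_j\ge0$ almost everywhere via Lemma~\ref{lem:sum}.
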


\begin{proof}
\step{Step 1. Singular values of the initial derivative matrix.}
Set $J_\nu(a)=(s_\nu^{-i}p_{\nu,j}^{(i)}(a))_{0\leq i,j\leq n}$.
Choose $V_\nu$ by singular value decomposition so that the columns of
$J_\nu(a)V_\nu$ are orthogonal, and denote their positive lengths by
$\sigma_{\nu,j}$. Cauchy's estimate and the upper bound for $\pv_\nu$
give $\sigma_{\nu,j}\leq e^{Cs_\nu}$. Moreover,
$\prod_j\sigma_{\nu,j}=s_\nu^{-\kappa}|R_\nu(a)|$.
By \eqref{eq:good-centers}, the logarithm of this product is
$o(s_\nu)$. Each singular value therefore also has an exponential
lower bound. After extraction,
\begin{equation}\label{eq:singular-exponents}
 s_\nu^{-1}\log\sigma_{\nu,j}\longrightarrow\lambda_j\in\R,
 \qquad \sum_{j=0}^{n}\lambda_j=0.
\end{equation}

Put $\hv_\nu=\pv_\nu V_\nu$. Its norm equals that of $\pv_\nu$, so
\eqref{eq:polynomial-norm} is unchanged. No sequence
$s_\nu^{-1}\log|h_{\nu,j}|$ can tend locally uniformly to $-\infty$:
Cauchy's estimates at $a$ would force
$s_\nu^{-1}\log\sigma_{\nu,j}\to-\infty$, contrary to
\eqref{eq:singular-exponents}. Subharmonic compactness gives the
component limits $v_j$. Equation~\eqref{eq:norm-max} gives
$U=\max_jv_j$. Since $|W(\hv_\nu)|=|R_\nu|$,
Lemma~\ref{lem:sum} and \eqref{eq:replacement-data} give
$\sum_jv_j\geq0$ almost everywhere.

\step{Step 2. Identification of the values at the center.}
For $0\leq i\leq n$, the definition of $\sigma_{\nu,j}$ gives
$|h_{\nu,j}^{(i)}(a)|\leq s_\nu^i\sigma_{\nu,j}$.
The initial-value estimate \eqref{eq:initial-value-bound} therefore yields
\begin{equation}\label{eq:point-initial-upper}
 |h_{\nu,j}(a+z)|
 \leq\sigma_{\nu,j}
 \left(\sum_{i=0}^{n}\frac{(s_\nu|z|)^i}{i!}\right)
 e^{|z|B_\nu(a)}.
\end{equation}
Choose $C_a>\limsup_\nu B_\nu(a)/s_\nu$.
Taking normalized logarithms in \eqref{eq:point-initial-upper} and using
\eqref{eq:singular-exponents} gives
$v_j(a+z)\leq\lambda_j+C_a|z|$ almost everywhere near $0$.
The submean inequality extends this bound to the subharmonic
representative, and hence $v_j(a)\leq\lambda_j$.

For the reverse inequality, fix $r>0$ with
$\overline{D(a,2r)}\subset D_4$. Cauchy's estimate gives
\begin{equation}\label{eq:singular-value-cauchy}
 \sigma_{\nu,j}\leq M(r,h_{\nu,j}(a+\cdot))
 \left(\sum_{i=0}^{n}\frac{(i!)^2}{(s_\nu r)^{2i}}\right)^{1/2}.
\end{equation}
For fixed $r$, the logarithm of the last factor is $o(s_\nu)$.
Apply the upper-bound assertion of
Lemma~\ref{lem:subharmonic-compactness} to
\eqref{eq:singular-value-cauchy}. It gives
$\lambda_j\leq\sup_{|z-a|<2r}v_j(z)$.
Let $r\downarrow0$ and use upper semicontinuity to obtain
$\lambda_j\leq v_j(a)$. Thus $v_j(a)=\lambda_j$ for every $j$.
Summation and \eqref{eq:singular-exponents} prove
\eqref{eq:point-balance}.
\end{proof}

The matrices and the further subsequence may depend on $a$. The function
$U$, already fixed by \eqref{eq:norm-limit}, is independent of these choices.

\subsection{Homogeneity of the limiting norm}

We use the following convexity criterion of Bergkvist and Rullg{\aa}rd
\cite[Corollary~1]{BR02}. The derivative in its statement is the
almost-everywhere defined weak derivative.

\begin{lem}[{\cite[Corollary~1]{BR02}}]\label{lem:finite-gradient-convex}
Let $G\subset\C$ be an open convex set, and let $A\subset\C$ be finite.
If $v$ is subharmonic on $G$ and $2\partial v\in A$ almost everywhere,
then $v$ is convex on $G$.
\end{lem}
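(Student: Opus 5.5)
The plan is to show that the distributional Hessian of $v$ is a positive semidefinite matrix-valued measure on $G$. Convexity then follows by the standard argument: mollify, note that the mollified functions have pointwise positive semidefinite Hessians and are convex on slightly smaller convex sets, and pass to the limit.

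\emph{Step 1: reduction to a finite-valued gradient.} Since $2\partial v=v_x-iv_y$ takes values in the finite set $A$ almost everywhere, the weak gradient $\nabla v$ is bounded. Hence $v$ is locally Lipschitz, and
\[
 \nabla v=\sum_{\alpha\in A}\bar\alpha\,\mathbf 1_{E_\alpha},\qquad E_\alpha=\{2\partial v=\alpha\},
\]
with $\bar\alpha$ viewed as a vector in $\R^2$. Subharmonicity says that $\mu=\Delta v=4\bar\partial\partial v$ is a positive measure. It is also the distributional $\bar\partial$-derivative of the finite-valued function $\phi=2\partial v$, up to the factor $2$.

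\emph{Step 2: polynomial identities and regularity.} Let $p(w)=\prod_{\alpha\in A}(w-\alpha)$. Then $p(\phi)=0$ almost everywhere, and more generally $\phi$ satisfies every polynomial relation that holds on $A$. Using Lagrange interpolation polynomials $\pi_\alpha$ with $\pi_\alpha(\beta)=\delta_{\alpha\beta}$ on $A$, we get $\mathbf 1_{E_\alpha}=\pi_\alpha(\phi)$. Every indicator is therefore a polynomial in $\phi$. I expect this step to be the main obstacle: we must show that each $\mathbf 1_{E_\alpha}$ has locally bounded variation, equivalently that $E_\alpha$ has locally finite perimeter, given only that $\bar\partial\phi$ is a measure. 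The curl-free condition is automatic, because $\phi$ is $\partial$ of a real function, so $\partial\phi$ is the conjugate of $\bar\partial\phi$ and is also a measure. Then $\phi\in BV_{\loc}$, and the BV chain rule for finite-valued functions applies to $\pi_\alpha(\phi)$. I would first try to obtain this BV property directly in this way; failing that, I would prove it by approximation, using mollifiers $\phi_\varepsilon$ and the identity $\partial\phi=\overline{\bar\partial\phi}$ to bound the total variation.

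\emph{Step 3: the jump structure.} With $\nabla v\in BV_{\loc}$ and finite-valued, the measure $D\nabla v$ is concentrated on the reduced boundaries $\partial^*E_\alpha\cap\partial^*E_\beta$. On such an interface, with measure-theoretic normal $n$ pointing from $E_\beta$ into $E_\alpha$, the density is $(\bar\alpha-\bar\beta)\otimes n$. Since $D\nabla v$ is symmetric, because it is a Hessian, we get $\bar\alpha-\bar\beta=c\,n$ for some real scalar $c$. Taking the trace, $\Delta v$ has density $c$ on this interface, so $c\ge0$ by Step 1. Hence on every interface the Hessian density is $c\,n\otimes n$ with $c\ge0$, which is positive semidefinite. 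There is no absolutely continuous part, since $\nabla v$ is piecewise constant, and no Cantor part, since $\nabla v$ is finite-valued. Therefore $D^2v\ge0$ as a matrix-valued measure.

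\emph{Step 4: convexity.} For each unit vector $e$, the second directional derivative $D_{ee}v$ is then a positive measure. Mollification gives smooth $v_\varepsilon$ with $D^2v_\varepsilon\ge0$ on $G_\varepsilon=\{z\in G:\operatorname{dist}(z,\partial G)>\varepsilon\}$, which is convex. Each $v_\varepsilon$ is convex on $G_\varepsilon$, and $v_\varepsilon\to v$ locally uniformly because $v$ is Lipschitz. Hence $v$ is convex on $G$.
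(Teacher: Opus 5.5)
The paper does not prove this lemma; it quotes it from Bergkvist--Rullg{\aa}rd. Your outline therefore has to carry the whole argument, and it breaks at Step~2.

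The identity you invoke, $\partial\phi=\overline{\bar\partial\phi}$, is false. For real $v$ one has $\overline{\partial\phi}=\bar\partial\bar\phi=2\bar\partial^2v$, whereas $\bar\partial\phi=\tfrac12\Delta v$. For $v=x^2-y^2$ you get $\phi=2z$, $\bar\partial\phi=0$, and $\partial\phi=2$. The real and imaginary parts of $2\bar\partial\phi$ are $\operatorname{div}\nabla v$ and $-\operatorname{curl}\nabla v$. So the hypothesis only encodes $\Delta v\ge0$ together with the automatic curl-freeness. The remaining second derivatives, $\partial\phi=\tfrac12(v_{xx}-v_{yy})-iv_{xy}$, are, up to a holomorphic term, the Beurling transform of $\bar\partial\phi$, and that transform does not map measures to measures. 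Hence $\phi\in BV_{\loc}$ does not follow. Your mollifier fallback fails for the same reason: $\|\partial\phi_\varepsilon\|_{L^1}$ is not controlled by $\|\bar\partial\phi_\varepsilon\|_{L^1}$.

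This is the heart of the matter, not a technicality. A convex function has a $BV_{\loc}$ gradient. Given your Steps~3--4, which are correct and routine, the $BV$ claim is therefore equivalent to the lemma itself, i.e.\ to $v=\max_{\alpha\in A}(\Re(\alpha z)+c_\alpha)$ on $G$. Any proof of it must use the finiteness of $A$ together with $\Delta v\ge0$. Your Step~2 uses finiteness only after $BV$ has already been assumed.

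Two remarks show where the difficulty lies:
\begin{itemize}
\item If $A$ lies on a line, then $\phi=\alpha+(\beta-\alpha)\psi$ with $\psi$ real. Since $\bar\partial\psi$ is a measure, $\psi\in BV_{\loc}$, and your argument goes through in that special case. For three non-collinear values, nothing you have written gives finite perimeter of the sets $E_\alpha$.
\item Finiteness alone is not enough either. Tent functions $\operatorname{dist}(\cdot,\partial T_i)$ over a Vitali covering by small homothetic triangles give Lipschitz functions with three gradient values whose gradients have infinite variation.
\end{itemize}
What is missing is a genuine regularity argument that produces the piecewise-affine structure from these two hypotheses combined.
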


\begin{prop}\label{prop:homogeneity}
Let $f$, $\rho$, and the sequence $r_\nu$ satisfy the assumptions
of Section~\ref{sec:arbitrary-limits}, and let $U$ be the subharmonic
limit constructed there in \eqref{eq:norm-limit}. Then
\begin{equation}\label{eq:homogeneity}
 U(tz)=t^\rho U(z)\qquad(t>0,\ z,tz\in D_2).
\end{equation}
\end{prop}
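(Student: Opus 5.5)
The plan is to show that each component limit $v_j$ from Lemma~\ref{lem:basis-at-point} satisfies a limiting algebraic equation for $2\partial v_j$ with monomial coefficients. From this, $U$ is convex along suitable sectors and in fact piecewise of the form $\Ree(c\,z^\rho)$, which gives the homogeneity \eqref{eq:homogeneity}.

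\emph{Step 1: the limiting characteristic equation.} Fix $a\in E$ and take $\hv_\nu=\pv_\nu V_\nu$ as in Lemma~\ref{lem:basis-at-point}. Constant unitary changes do not alter the fundamental operator, so each $h_{\nu,j}$ is annihilated by $L_{\pv_\nu}$. Divide the equation by $s_\nu^{n+1}h_{\nu,j}$ as in \eqref{eq:normalized-peak-equation}. Pass to the limit using \eqref{eq:polynomial-coefficients}, \eqref{eq:arbitrary-coefficients} and Lemma~\ref{lem:logderivlimit}. Almost everywhere in $D_4$ this gives
\[
 w^{n+1}+\sum_{q\in J_\rho}c_qz^{q(\rho-1)}w^{n+1-q}=0,\qquad w=2\partial v_j,
\]
and likewise for $U$, since a.e.\ $2\partial U$ equals $2\partial v_j$ on the set where $U=v_j$. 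Substitute $w=z^{\rho-1}\zeta$ on a sector with a branch of the power. The equation becomes $\zeta^{n+1}+\sum_q c_q\zeta^{n+1-q}=0$, which has constant coefficients. Hence $2\partial U\in z^{\rho-1}A$ a.e., where $A$ is a finite set of roots. On a small sector put $\Phi(z)=z^\rho/\rho$ and $\widetilde U=U\circ\Phi^{-1}$. Then $2\partial\widetilde U\in A$ a.e., and Lemma~\ref{lem:finite-gradient-convex} makes $\widetilde U$ convex on convex subdomains of the image. Its gradient takes finitely many values, so $\widetilde U$ is locally a maximum of finitely many affine functions $\Ree(\bar\alpha\zeta)+\beta$.

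\emph{Step 2: killing the constants.} Homogeneity requires every $\beta=0$, and this is where the balancing point is used. First, \eqref{eq:origin-bound} forces $U(0)=0$ and $U=O(|z|^{\rho-\epsilon})$. So in each sector the affine pieces extend by convexity (the pieces are rigid along rays), and they cannot carry a nonzero additive constant near the vertex. I would argue that along each ray from $0$ the function $t\mapsto\widetilde U$ is convex piecewise linear in $t^\rho$ with value $0$ at $t=0$. The difficulty is excluding breakpoints away from $0$, i.e.\ a piece $\Ree(\bar\alpha\zeta)+\beta$ with $\beta\ne0$ becoming active only at positive distance. For this I would use Lemma~\ref{lem:basis-at-point} at a generic $a$ lying on such a break line. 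There $\sum_jv_j(a)=0$ while $\sum_j v_j\ge0$ nearby, and each $v_j$ satisfies the same structure, being locally a max of finitely many harmonic pieces $\Ree(\alpha\Phi)+\beta$. Combining $-nU\le v_j\le U$ from \eqref{eq:sandwich} with the minimum of $\sum v_j$ attained at $a$ would force the gradients to cancel and the constants to match those of a neighbouring piece, contradicting the break. Because $E$ has full measure, such points $a$ exist on every break line.

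\emph{Main obstacle.} I expect Step 2 to be the hard part. Showing that no affine piece with nonzero constant can appear in the interior relies on combining the pointwise balance \eqref{eq:point-balance} with convexity, and on handling the branch cuts of $z^\rho$ when $\rho$ is nonintegral. Once every piece is $\Ree(\bar\alpha\,z^\rho/\rho)$ on sectors, \eqref{eq:homogeneity} follows immediately on $D_2$.
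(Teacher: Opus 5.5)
Your Step~1 matches the paper's first two steps: the limiting gradient equation, the change of variable $w=z^\rho/\rho$ on a narrow sector, and Lemma~\ref{lem:finite-gradient-convex}. Step~2, however, does not work as proposed.

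First, the claim that $E$ meets every break line because it has full measure is false. A line segment is a planar null set, so $E$ can miss all of them.

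More seriously, no contradiction can be extracted locally at a break point. Near $w_0$, put $x=\Ree(c(\zeta-w_0))$ with $|c|=1$, and take $V_0=\max\{x,0\}$, $V_1=\max\{-x,0\}$, and $V_j=0$ for $j\ge2$. These functions have the properties you list:
\begin{itemize}
\item each is convex with finitely many gradient values;
\item $-nU\le V_j\le U$, where $U=|x|$;
\item $\sum_jV_j\ge0$ and $\sum_jV_j(w_0)=0$.
\end{itemize}
Yet $U$ breaks along a line through $w_0$, and for generic $c$ that line misses the vertex. Whether the break line passes through the vertex cannot be seen locally at $w_0$. The vertex values $V_j(0)=0$ of the individual components must enter the argument, whereas your Step~2 uses the vertex only for $U$.

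The missing idea is to use the balance point as the \emph{endpoint of a radial segment}, not as a point on a break line.
\begin{enumerate}
\item Fix any $a\in E$ and take the component limits $v_j$ of Lemma~\ref{lem:basis-at-point} for this $a$. Set $V_j(w)=v_j(z(w))$ on a narrow sector containing $[0,a]$.
\item By \eqref{eq:sandwich} and \eqref{eq:origin-bound}, each $V_j$, not just $U$, extends continuously to the vertex with $V_j(0)=0$.
\item Convexity of each $V_j$ on the segment from $0$ to $w_0=a^\rho/\rho$ gives $V_j(tw_0)\le tV_j(w_0)$ for $0<t<1$.
\item By \eqref{eq:point-balance}, $0\le\sum_jV_j(tw_0)\le t\sum_jV_j(w_0)=0$. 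Hence every one of these inequalities is an equality.
\item Since $(sa)^\rho/\rho=s^\rho w_0$, taking the maximum over $j$ yields $U(sa)=s^\rho U(a)$ for $0<s<1$.
\end{enumerate}
The components depend on $a$, but $U$ does not, so this identity holds on every ray through the dense set $E$. Continuity of $U$ then extends it to $D_2$; $U$ is locally Lipschitz because the roots of the gradient equation are locally bounded. The case $s>1$ follows by inversion.

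This route needs no analysis of break lines or of the affine constants $\beta$. It does need convexity of the individual $V_j$, because the balance condition concerns $\sum_jv_j$ rather than $U$.
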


\begin{proof}
\step{Step 1. An algebraic equation for the gradient.}
Fix $a\in E$, and let $v$ be any component limit supplied by
Lemma~\ref{lem:basis-at-point} at this $a$. Let $h_\nu$ be the
corresponding component of $\hv_\nu=\pv_\nu V_\nu$ constructed in
Step~1 of that lemma's proof. Since $V_\nu$ is constant in $z$,
$h_\nu$ is annihilated by the operator $L_{\pv_\nu}$ defined in
Section~\ref{sec:arbitrary-limits}. Dividing its
differential equation by $s_\nu^{n+1} h_\nu$ gives
\begin{equation}\label{eq:normalized-polynomial-equation}
 \frac{h_\nu^{(n+1)}}{s_\nu^{n+1} h_\nu}
 +\sum_{q=1}^{n+1}\frac{b_{q,\nu}^p}{s_\nu^q}
                  \frac{h_\nu^{(n+1-q)}}{s_\nu^{n+1-q}h_\nu}=0
\end{equation}
almost everywhere. By \eqref{eq:polynomial-coefficients},
\eqref{eq:arbitrary-scaled-coefficients},
\eqref{eq:arbitrary-coefficients}, and
\eqref{eq:monomial-coefficients}, the coefficient factors converge in
measure to $0$ for $q=1$ and to $a_q$ for $q\geq2$.
Lemma~\ref{lem:logderivlimit} permits passage to the limit in every term
of \eqref{eq:normalized-polynomial-equation}. Thus
\begin{equation}\label{eq:gradient-equation}
 (2\partial v)^{n+1}+
 \sum_{q\in J_\rho}c_qz^{q(\rho-1)}(2\partial v)^{n+1-q}=0
 \quad\text{almost everywhere on }D_4.
\end{equation}
The coefficients of this monic polynomial are bounded on compact
subsets of $D_4$. Its roots, and hence the weak gradient of $v$, are
locally bounded. Therefore $v$ has a locally Lipschitz representative,
which agrees with its subharmonic representative. Choosing one
$a_0\in E$ in \eqref{eq:point-balance} shows that $U$, as a maximum of
finitely many such functions, is also locally Lipschitz on $D_4$.
For every subsequent choice of $a$, both identities and the inequality
in \eqref{eq:point-balance} consequently hold pointwise.

\step{Step 2. Convexity after a conformal change of variable.}
Fix $a\in E$ and its component limits $v_0,\ldots,v_n$.
Choose a sufficiently narrow open sector $S\subset D_2$ with vertex $0$
and containing $a$, so that a branch of $w=z^\rho/\rho$ maps $S$
conformally onto a convex sector $G$. Set $V_j(w)=v_j(z(w))$.
These functions are subharmonic. The weak chain rule gives
$2\partial_zv_j=z^{\rho-1}2\partial_wV_j$.
On the chosen sector, substitution in \eqref{eq:gradient-equation}
and cancellation of $z^{(n+1)(\rho-1)}$ yield
\begin{equation}\label{eq:constant-gradient-equation}
 (2\partial_wV_j)^{n+1}+\sum_{q\in J_\rho}c_q(2\partial_wV_j)^{n+1-q}=0
 \quad\text{almost everywhere on }G.
\end{equation}
The roots of the fixed polynomial in
\eqref{eq:constant-gradient-equation} form a finite set.
Lemma~\ref{lem:finite-gradient-convex} shows that each $V_j$ is convex.

\step{Step 3. Equality on radial segments.}
Applying \eqref{eq:sandwich} to \eqref{eq:point-balance} gives
$-nU\leq v_j\leq U$. By \eqref{eq:origin-bound}, each $V_j$
therefore extends continuously to the vertex, with $V_j(0)=0$.
Put $w_0=a^\rho/\rho$. Since $V_j(0)=0$, convexity gives
\[
 V_j(tw_0)\leq tV_j(w_0),\qquad 0<t<1.
\]
On the other hand, \eqref{eq:point-balance} gives
\[
 0\leq\sum_jV_j(tw_0)
 \leq t\sum_jV_j(w_0)=0.
\]
Hence equality holds in each of the preceding convexity inequalities.

Now let $t=s^\rho$, where $0<s<1$. Since
\[
 \frac{(sa)^\rho}{\rho}=s^\rho w_0,
\]
taking the maximum over $j$ gives
\[
 U(sa)=s^\rho U(a),\qquad a\in E,\quad 0<s<1.
\]
Since $E$ is dense and $U$ is continuous, this identity extends to
all $a\in D_2$. The case $s>1$ follows by applying the identity with
multiplier $1/s$ at $sa$. This proves \eqref{eq:homogeneity}.
\end{proof}

\subsection{Convergence of the characteristic ratios}

Homogeneity identifies all possible limits of the normalized characteristic.

\begin{prop}\label{prop:regular-variation}
Let $f\colon\C\to\PP^n$ be a transcendental linearly non-degenerate
holomorphic curve, put $T(r)=T(r,f)$, and assume that $N_1(r,f)=o(T(r))$
and $\rho_*(T)=\rho^*(T)=\rho\in\Gn$. Then
\begin{equation}\label{eq:ratio-limit}
 \frac{T(cr)}{T(r)}\longrightarrow c^\rho\qquad(r\to\infty),
\end{equation}
locally uniformly for $c\in(0,\infty)$.
\end{prop}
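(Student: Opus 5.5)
We argue by a subsequence-compactness argument. Suppose \eqref{eq:ratio-limit} fails locally uniformly. Then there are a compact interval $[c_1,c_2]\subset(0,\infty)$, a number $\delta>0$, radii $r_\nu\to\infty$ and points $c_\nu\in[c_1,c_2]$ with
\[
 \left|\frac{T(c_\nu r_\nu)}{T(r_\nu)}-c_\nu^\rho\right|\ge\delta .
\]
Pass to a subsequence on which $c_\nu\to c_0$. It suffices to show that, along a further subsequence, $F_\nu(t):=T(tr_\nu)/T(r_\nu)$ converges to $t^\rho$ locally uniformly on $(0,\infty)$. Since $c_0^\rho$ is continuous in $c_0$, this contradicts the displayed inequality.

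We first treat $t\in(0,4)$. Apply the construction of Section~\ref{sec:arbitrary-limits} to the sequence $r_\nu$ with $s_\nu=T(r_\nu)$. This produces, on a subsequence, the limit $U$ of \eqref{eq:norm-limit} and the identity \eqref{eq:radial-mean}, whose error is uniform in $t$. The identity uses \eqref{eq:meanidentity} and the hypothesis that the indices coincide at $\rho>0$.

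By Proposition~\ref{prop:homogeneity}, $U(tz)=t^\rho U(z)$. Hence
\[
 \mean t U=t^\rho\,\mean 1U
\]
for $t<2$. The next step is to pass from $L^1_{\loc}$ convergence of $U_\nu$ to convergence of the circular means $\mean t{U_\nu}$. We do this by averaging over annuli, as in \eqref{eq:annular-characteristic-limit}:
\[
 \frac{2}{b^2-a^2}\int_a^b F_\nu(t)\,t\,\mathrm dt\longrightarrow\frac{2}{b^2-a^2}\int_a^b \mean tU\,t\,\mathrm dt
\]
for all $0<a<b<2$. Each $F_\nu$ is nondecreasing, and the candidate limit $t\mapsto t^\rho\mean1U$ is continuous. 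A standard squeeze then gives $F_\nu(t)\to t^\rho\mean1U$ pointwise on $(0,2)$. Concretely, for small $h$, the annular average over $(t,t+h)$ bounds $F_\nu(t)$ from above, and the average over $(t-h,t)$ bounds it from below. Monotonicity and continuity of the limit upgrade this to locally uniform convergence, by a Dini/P\'olya-type argument. At $t=1$ we have $F_\nu(1)=1$, so $\mean1U=1$ and the limit is $t^\rho$ on $(0,2)$.

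To cover all of $(0,\infty)$, fix $R$ and rerun the same argument at the base radii $r'_\nu=Rr_\nu$. Lemma~\ref{lem:power-bounds} keeps $T(Rr_\nu)/T(r_\nu)$ bounded above and below along the sequence. We pass to a subsequence on which this ratio converges to some $\kappa_R$. Applying the previous paragraph at the base $r'_\nu$ gives $T(tRr_\nu)/T(Rr_\nu)\to t^\rho$ on $(0,2)$. Evaluating at $t=1/R$ for $R\in(1,2)$ identifies $\kappa_R=R^\rho$. Induction over $R=2^k$, together with a diagonal subsequence, then gives $F_\nu(t)\to t^\rho$ locally uniformly on $(0,\infty)$.

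\textbf{Main obstacle.} The main difficulty is the step from $L^1_{\loc}$ convergence of $U_\nu$ to pointwise convergence of the normalized characteristics. This is handled by the monotonicity of $T$, the continuity of the homogeneous limit, and the $t$-uniform error in \eqref{eq:radial-mean}. All of the substantive analytic content has already been supplied by Proposition~\ref{prop:homogeneity}.
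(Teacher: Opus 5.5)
Your proof is correct and is essentially the paper's argument: annular averages of \eqref{eq:radial-mean}, the homogeneity from Proposition~\ref{prop:homogeneity}, a monotonicity squeeze to get pointwise convergence on $(0,2)$, and the normalization $F_\nu(1)=1$. The differences are only organizational. You extend beyond $(0,2)$ by rerunning the construction at base radii $Rr_\nu$ inside a contradiction argument. The paper instead first obtains the full limit on $(0,2)$ by the sequential criterion and then chains the ratios $T(b^{j+1}r)/T(b^jr)$. Both arguments get local uniformity from monotonicity.
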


\begin{proof}
\step{Step 1. Identification of a subsequential limit.}
Starting with an arbitrary $r_\nu\to\infty$, carry out the
construction and subsequence extractions in
Section~\ref{sec:arbitrary-limits}. Let $U_\nu,U$ be the functions
constructed there in \eqref{eq:norm-limit}, and put
$F_\nu(t)=T(tr_\nu)/T(r_\nu)$.
Integrating \eqref{eq:radial-mean} over an annulus and using
\eqref{eq:norm-limit} and the homogeneity \eqref{eq:homogeneity} from
Proposition~\ref{prop:homogeneity} gives
\begin{equation}\label{eq:annular-ratio-limit}
 \int_a^bF_\nu(t)t\,\dd t\longrightarrow
 K\int_a^bt^{\rho+1}\,\dd t,
 \qquad 0<a<b<2,\qquad K=\mean1U.
\end{equation}
Indeed, the limiting area integral is the integral of $U$, whose
circular mean at radius $t$ is $Kt^\rho$.

For $0<t-h<t<t+h<2$, monotonicity gives
\[
 \frac{\int_{t-h}^tF_\nu(s)s\,\dd s}{\int_{t-h}^ts\,\dd s}
 \leq F_\nu(t)\leq
 \frac{\int_t^{t+h}F_\nu(s)s\,\dd s}{\int_t^{t+h}s\,\dd s}.
\]
Apply \eqref{eq:annular-ratio-limit} and let $h\downarrow0$.
It follows that $F_\nu(t)\to Kt^\rho$ for $0<t<2$.
Since $F_\nu(1)=1$, we have $K=1$.

\step{Step 2. The full limit and uniformity.}
Every original sequence has a subsequence with this same limit.
The sequential criterion for convergence therefore gives
$T(tr)/T(r)\to t^\rho$ for each $t\in(0,2)$.
For $c\geq2$, choose an integer $k$ so that $b=c^{1/k}\in(1,2)$ and write
\[
 \frac{T(cr)}{T(r)}=
 \prod_{j=0}^{k-1}\frac{T(b^{j+1}r)}{T(b^jr)}.
\]
Each factor tends to $b^\rho$, proving \eqref{eq:ratio-limit} for every
$c>0$. Finally, on a compact interval $[a,b]\subset(0,\infty)$,
choose a finite partition on which the oscillation of $c^\rho$ on
each subinterval is small. Convergence at the partition points,
together with monotonicity of $T(cr)$ in $c$, bounds the error uniformly
between consecutive points. This proves local uniformity.
\end{proof}

We now complete the proof of Theorem~\ref{thm:main}.

\begin{proof}
Transcendence gives $\log r=o(T(r,f))$. Under \eqref{eq:mainhyp},
Proposition~\ref{prop:indices}
therefore implies that the lower order, order, and both Drasin--Shea
indices coincide at a value in $\{0\}\cup\Gn$.
Proposition~\ref{prop:small-order} excludes zero, since a curve satisfying
its hypotheses is rational. Thus the common value belongs to $\Gn$.
By Proposition~\ref{prop:regular-variation},
$\ell(r)=T(r,f)/r^\rho$ satisfies
$\ell(cr)/\ell(r)=c^{-\rho}T(cr,f)/T(r,f)\to1$, locally uniformly for
$c>0$. Hence $\ell$ is slowly varying and \eqref{reg} holds.
\end{proof}

\section{Sharpness}\label{sec:sharpness}

We give details for the examples mentioned in the Introduction. The first
construction is the classical polynomial-coefficient differential equation
used there. The second is an explicit curve attaining the order-zero bound.

\subsection{Realization of every allowed order}

The raywise asymptotics use an elementary result for a diagonal system
with an integrable perturbation. We include its proof.

\begin{lem}\label{lem:integrable-system}
Let $q\geq1$ be an integer, let $\Lambda=\operatorname{diag}(\lambda_1,\ldots,\lambda_q)$
with pairwise distinct real parts $\Ree\lambda_j$, and let
$E\colon[t_0,\infty)\to\C^{q\times q}$ be continuous, with
$\int_{t_0}^\infty\|E(t)\|\,\mathrm dt<\infty$. Here $\|\cdot\|$ is the
operator norm induced by the Euclidean norm. The system
$X'=(\Lambda+E(t))X$ has a fundamental system of solutions
\[
 X_j(t)=e^{\lambda_jt}(e_j+o(1)),\qquad 1\leq j\leq q,
\]
where $e_1,\ldots,e_q$ are the standard coordinate vectors in $\C^q$.
\end{lem}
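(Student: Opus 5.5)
This is a Levinson-type asymptotic integration result; I will prove it directly by the standard integral-equation method. Fix $j$. Order the indices by the real parts of the eigenvalues. For $t\ge t_1\ge t_0$, with $t_1$ chosen later, I look for a solution of the form $X_j(t)=e^{\lambda_jt}Y(t)$. Then $Y$ satisfies
\[
 Y'=(\Lambda-\lambda_j)Y+E(t)Y .
\]
I replace this by the integral equation
\[
 Y(t)=e_j+\sum_{k:\,\Ree\lambda_k<\Ree\lambda_j}\int_{t_1}^t e^{(\lambda_k-\lambda_j)(t-s)}P_kE(s)Y(s)\,\mathrm ds
 -\sum_{k:\,\Ree\lambda_k\ge\Ree\lambda_j}\int_t^\infty e^{(\lambda_k-\lambda_j)(t-s)}P_kE(s)Y(s)\,\mathrm ds .
\]
Here $P_k$ is the coordinate projection onto $e_k$. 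The $k=j$ term is included in the second sum, where its kernel is $1$.

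\textbf{Contraction.} Every kernel has modulus at most $1$ on its range of integration. This uses the pairwise distinct real parts: for $k\ne j$ the exponent $\Ree(\lambda_k-\lambda_j)(t-s)$ is negative in both sums, and it is zero only when $k=j$. The integral operator on bounded continuous functions on $[t_1,\infty)$ therefore has norm at most $\int_{t_1}^\infty\|E\|\,\mathrm ds$. Choose $t_1$ so that this is less than $1/2$. The contraction principle then gives a unique bounded solution $Y$ with $\|Y\|_\infty\le 2$. Differentiating the integral equation shows that $e^{\lambda_jt}Y$ solves the system.

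\textbf{Asymptotics.} I need $Y(t)\to e_j$. The integrals $\int_t^\infty$ are bounded by $2\int_t^\infty\|E\|\to0$. For the forward integrals with $\Ree(\lambda_k-\lambda_j)=-\delta<0$, split at $s=t/2$. The part over $[t/2,t]$ is at most $2\int_{t/2}^\infty\|E\|$. The part over $[t_1,t/2]$ is at most $2e^{-\delta t/2}\int_{t_1}^\infty\|E\|$. Both tend to zero, so $Y(t)=e_j+o(1)$.

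\textbf{Independence and extension.} The solutions $X_j$ are linearly independent, because the vectors $Y_j(t)=e_j+o(1)$ are independent for large $t$. By linear ODE uniqueness, each $X_j$ extends from $[t_1,\infty)$ to all of $[t_0,\infty)$.

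The only delicate point is the sign bookkeeping in the kernel. Distinct real parts ensure that no off-diagonal kernel has modulus identically $1$, which is exactly what makes the forward-integral terms decay. The rest is routine.
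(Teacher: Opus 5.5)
Your proof is correct and is essentially the paper's argument: the same integral operator (forward integrals from the base point for $\Ree\lambda_k<\Ree\lambda_j$, backward integrals from $\infty$ otherwise, all kernels of modulus at most one), the same contraction, and the same decay argument for $Y(t)\to e_j$, with your split at $s=t/2$ in place of the paper's split at a fixed point and your bound $\int_{t_1}^\infty\|E\|$ slightly sharper than the paper's $q\int_T^\infty\|E\|$. One small remark: the forward terms decay by construction, not because the real parts are distinct, and an index with $\Ree\lambda_k=\Ree\lambda_j$ could simply be placed in the backward sum, so your argument does not actually use that hypothesis.
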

\begin{proof}
Fix $j$ and seek $X_j(t)=e^{\lambda_jt}(e_j+u(t))$. For a sufficiently
large $T\geq t_0$, define an operator on bounded continuous vector
functions on $[T,\infty)$ by
\[
 (\mathcal T_j u)_i(t)=
 \begin{cases}
 \displaystyle\int_T^t e^{(\lambda_i-\lambda_j)(t-s)}
           [E(s)(e_j+u(s))]_i\,\mathrm ds,
           &\Ree\lambda_i<\Ree\lambda_j,\\[2mm]
 \displaystyle-\int_t^\infty e^{(\lambda_i-\lambda_j)(t-s)}
           [E(s)(e_j+u(s))]_i\,\mathrm ds,
           &\Ree\lambda_i\geq\Ree\lambda_j.
 \end{cases}
\]
The brackets with subscript $i$ denote the $i$th coordinate.
Every exponential kernel has modulus at most $1$ on its integration
interval. Thus the Lipschitz constant of $\mathcal T_j$ in the supremum
norm is at most $q\int_T^\infty\|E(s)\|\,\mathrm ds$, which is less than
$1$ for sufficiently large $T$. The contraction theorem gives a fixed
point $u$. Its backward integrals tend to zero by integrability. For each
forward integral, its part over a fixed bounded interval tends to zero
because $\Ree(\lambda_i-\lambda_j)<0$, while the remaining part is bounded
by an arbitrarily small integrable tail. Hence $u(t)\to0$.

Differentiation of the integral equations gives
$u'=(\Lambda-\lambda_j I)u+E(t)(e_j+u)$, so $X_j$ solves the system.
After the exponential factor is removed from each column, the matrix of
these $q$ solutions tends to the identity. Its determinant is therefore
nonzero for large $t$. Existence and uniqueness for a linear system extend
this fundamental system to $[t_0,\infty)$.
\end{proof}

We now apply this lemma to the polynomial-coefficient equation in the
Introduction.

\begin{prop}\label{prop:sharpness-orders}
Let $q$ and $k$ be integers with $2\leq q\leq n+1$ and $k\geq0$, and put
$m=n+1-q$, $\beta=k/q$, and $\rho=1+\beta$. Consider
\begin{equation}\label{eq:sharpness-equation}
 w^{(n+1)}=z^k w^{(m)}.
\end{equation}
Let $f_0,\ldots,f_n$ be its entire solutions with initial values
$f_j^{(i)}(0)=\delta_{ij}$ for $0\leq i,j\leq n$.
Then $\fv=(f_0,\ldots,f_n)$ is a reduced representation of a
transcendental linearly non-degenerate curve, $W(\fv)\equiv1$, and
\begin{equation}\label{eq:sharpness-asymptotic}
 T(r,\fv)\sim\frac{q\sin(\pi/q)}{\pi\rho}\,r^\rho.
\end{equation}
\end{prop}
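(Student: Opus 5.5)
Our approach is to read off the Wronskian and non-degeneracy directly from the equation, and then to obtain \eqref{eq:sharpness-asymptotic} from raywise WKB asymptotics in the self-similar variable $\zeta=z^\rho/\rho$, integrated over the circle.

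\textbf{Algebraic part.} Equation \eqref{eq:sharpness-equation} has no $w^{(n)}$ term. By Lemma~\ref{lem:fundamental-operator}, $W(\fv)'=0$, and the initial conditions give $W(\fv)(0)=1$; hence $W(\fv)\equiv1$. This shows linear non-degeneracy, and also that the $f_j$ have no common zero, so $\fv$ is reduced. The polynomial $1,z,\ldots,z^m$ solve the equation. A solution space consisting only of polynomials would force $z^kw^{(m)}=w^{(n+1)}$ to fail on degree grounds for $w$ of degree $>m$. Hence some $f_j$ is transcendental. The curve itself is then not rational, because $W\equiv1$ and the span is not a polynomial space (any representation differs by a zero-free factor, which must be constant up to $W$). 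So the curve is transcendental.

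\textbf{Asymptotics on rays.} Fix $\theta$ and write $z=re^{i\theta}$. Put $v=w^{(m)}$, which satisfies $v^{(q)}=z^kv$. Convert this to a first-order system for $(v,v'/z^{\beta},\ldots,v^{(q-1)}/z^{(q-1)\beta})$. In the variable $\zeta$ this becomes $X'=(\Omega+E)X$, where $\Omega$ is the companion matrix of $\lambda^q=1$ and $E=O(|\zeta|^{-1})$. This error is not integrable, so the next step is to diagonalize and apply one Levinson-type correction. The diagonal $O(1/\zeta)$ term only produces power factors, which can be absorbed by a further diagonal gauge. The remainder is $O(|\zeta|^{-2})$, which is integrable.

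Off the finitely many Stokes rays, the real parts $\Ree(\omega\zeta)$ over the $q$-th roots of unity $\omega$ are distinct. Lemma~\ref{lem:integrable-system} then gives solutions $v\sim z^{c}\exp(\omega z^\rho/\rho)$. Integrating $m$ times produces particular solutions of the same exponential type, up to polynomial factors, and the homogeneous part consists of polynomials. Thus $\log\norm{\fv(re^{i\theta})}=h(\theta)r^\rho+O(\log r)$, where
\[
h(\theta)=\rho^{-1}\max_{\omega^q=1}\Ree\bigl(\omega e^{i\rho\theta}\bigr),
\]
provided each dominant exponential actually appears with nonzero coefficient in some component. This holds because the $f_j$ span the full solution space.

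\textbf{Integration.} $T(r)=\mean r{\log\norm{\fv}}+O(1)$. To pass the limit under the integral we use dominated convergence, with a uniform bound $\log\norm{\fv}\le Cr^\rho$ from the equation (Cauchy/Gronwall). For the lower bound near Stokes rays we use $\log\norm{\fv}\ge -O(\log r)$, which follows from $W\equiv1$ as in Step~2 of Proposition~\ref{prop:representation}. This gives $T(r)\sim r^\rho\frac1{2\pi}\int_{-\pi}^{\pi}h(\theta)\,d\theta$. Substituting $\phi=\rho\theta$, the function $\max_\omega\Ree(\omega e^{i\phi})$ has period $2\pi/q$ and equals $\cos\psi$ for $|\psi|\le\pi/q$, with mean $\frac{q}{\pi}\sin(\pi/q)$. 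Since the function is periodic and we average over a full turn in $\theta$, the mean in $\phi$ over $[-\rho\pi,\rho\pi]$ equals the periodic mean, because the integrand is $2\pi/q$-periodic and $2\pi\rho$ is a multiple of $2\pi/q$ ($q\rho=q+k\in\Z$). This gives the constant $\frac{q\sin(\pi/q)}{\pi\rho}$.

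The \emph{main obstacle} is the uniformity of the ray asymptotics near Stokes directions, together with the $O(1/\zeta)$ correction needed before Lemma~\ref{lem:integrable-system} applies. A pointwise limit off a finite set of $\theta$ plus the domination bound should be enough, so uniformity in $\theta$ is not needed.
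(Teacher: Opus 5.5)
Your proposal is correct and follows essentially the paper's route: $W(\fv)\equiv1$ from the missing $w^{(n)}$ term, raywise asymptotics for $y=w^{(m)}$ in the variable $z^\rho/\rho$ by diagonalizing, removing the off-diagonal $1/t$ part and applying a power gauge (here scalar, since every diagonal entry of the $1/t$ term equals $-\beta(q-1)/(2\rho)$), then Lemma~\ref{lem:integrable-system}, $m$ integrations, and dominated convergence with the same periodic-mean computation.

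A few minor points. Only $1,\ldots,z^{m-1}$ are polynomial solutions, not $z^m$. The bound $\log\norm{\fv}\le Cr^\rho$ needs Gronwall applied to the rescaled derivative vector, since plain Gronwall gives only $r^{k+1}$. Step~2 of Proposition~\ref{prop:representation} gives a circular-mean bound on $\log^-\norm{\fv}$ rather than a pointwise one; that bound still suffices, but the paper gets pointwise domination $\log\norm{\fv}\ge-Cr^\rho$ more simply from Hadamard's inequality and $W(\fv)=1$. Finally, the paper deduces transcendence from the positive asymptotic constant instead of your algebraic argument.
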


\begin{proof}
Abel's identity gives $W(\fv)\equiv1$, because the coefficient of
$w^{(n)}$ in \eqref{eq:sharpness-equation} is zero and the initial
Wronskian is $1$. Thus the components are linearly independent and have
no common zero. The positive asymptotic constant in
\eqref{eq:sharpness-asymptotic} will imply transcendence.

\step{Step 1. A uniform bound on circles.}
For a fixed $R\geq1$, replace the derivative vector of a solution by
$(w,R^{-\beta}w',\ldots,R^{-n\beta}w^{(n)})$.
Equation~\eqref{eq:sharpness-equation} becomes a first-order system whose
matrix is $R^\beta$ times a matrix bounded on $D_R$ independently of $R$:
its superdiagonal entries are $1$, and its last row has the single
entry $(z/R)^k$ in position $m$ (positions are numbered from $0$).
Integration along a radial segment and Gronwall's inequality give
$|f_j^{(i)}(z)|\leq R^{i\beta}e^{CR^\rho}$ for $|z|\leq R$,
after increasing $C$ to cover the initial values.
Hadamard's determinant inequality, applied to $W(\fv)=1$, bounds the
norm of the row of derivative order zero from below by the reciprocal of the
product of the norms of the remaining rows. Hence
\begin{equation}\label{eq:sharpness-domination}
 \bigl|\log\norm{\fv(re^{i\theta})}\bigr|\leq C r^\rho
 \qquad(r\geq1,\ -\pi\leq\theta\leq\pi).
\end{equation}

\step{Step 2. Growth on a ray.}
The function $y=w^{(m)}$ satisfies $y^{(q)}=z^ky$.
On a fixed ray, choose branches of $z^\beta$ and $z^\rho$ and set
$Y=(y,z^{-\beta}y',\ldots,z^{-(q-1)\beta}y^{(q-1)})^{\mathsf T}$.
With $x=z^\rho/\rho$, its system is
\begin{equation}\label{eq:sharpness-scaled-system}
 \frac{\mathrm dY}{\mathrm dx}
 =\left(A-\frac{\beta}{\rho x}\operatorname{diag}(0,\ldots,q-1)\right)Y,
\end{equation}
where $A$ has superdiagonal entries $1$, lower-left entry $1$, and all
other entries zero. Its eigenvalues are the $q$th roots of unity, and
corresponding eigenvectors are $(1,\zeta,\ldots,\zeta^{q-1})^{\mathsf T}$.

Write $x=e^{i\phi}t$, where $\phi=\rho\theta$ and $t=r^\rho/\rho$.
Diagonalizing $A$ in \eqref{eq:sharpness-scaled-system} gives
$Z'=(\Lambda+B/t)Z$, where
$\Lambda=\operatorname{diag}(e^{i\phi}\zeta)$.
Every diagonal entry of $B$ equals
$b=-\beta(q-1)/(2\rho)$: conjugation by the Fourier matrix replaces
the diagonal of $\operatorname{diag}(0,\ldots,q-1)$ by its average.
For all but finitely many directions $\theta$, the real parts of the
entries $\lambda_\zeta=e^{i\phi}\zeta$ of $\Lambda$ are distinct.
Choose an off-diagonal matrix $C$ with
$C_{ij}=-B_{ij}/(\lambda_i-\lambda_j)$ and $C_{ii}=0$.
The substitution $Z=(I+C/t)t^bX$ then gives
\begin{equation}\label{eq:sharpness-integrable-system}
 X'=(\Lambda+E(t))X,\qquad E(t)=O(t^{-2}).
\end{equation}

Apply Lemma~\ref{lem:integrable-system} with the diagonal matrix
$\Lambda$ and perturbation $E(t)$ just obtained in
\eqref{eq:sharpness-integrable-system}. The real parts of the diagonal
entries are distinct in the chosen direction, and $E(t)=O(t^{-2})$ is
integrable on a sufficiently far tail of the ray. The lemma supplies a
fundamental system $X_j(t)=e^{\lambda_jt}(e_j+o(1))$ for this system.

Undoing the transformations gives a fundamental system of scalar solutions
$y_\zeta$ with
$y_\zeta(re^{i\theta})=t^b e^{\lambda_\zeta t}(1+o(1))$,
up to nonzero constant factors. Put
\begin{equation}\label{eq:sharpness-indicator}
 h(\theta)=\frac1\rho\max_{\zeta^q=1}\Re(\zeta e^{i\rho\theta}).
\end{equation}
The maximum is positive outside the finitely many exceptional directions.
If $m>0$, adjoining the polynomials $1,z,\ldots,z^{m-1}$ and taking
$m$ primitives of the $y_\zeta$ gives a fundamental system for
\eqref{eq:sharpness-equation}. Integration along the ray does not
increase any exponential rate beyond $h(\theta)$. For a solution with
the maximal rate, which is positive, each integration preserves that
rate: integration of $r^a e^{\lambda r^\rho/\rho}(1+o(1))$, with
$\Re\lambda>0$, gives
$\lambda^{-1}r^{a-\beta}e^{\lambda r^\rho/\rho}(1+o(1))$,
apart from the constant ray factor. This follows by integration by
parts for the leading term and an absolute bound for its $o(1)$ error.
A fixed nonsingular change of basis changes the logarithm of the norm
by $O(1)$. Therefore
\begin{equation}\label{eq:sharpness-ray-limit}
 r^{-\rho}\log\norm{\fv(re^{i\theta})}\longrightarrow h(\theta)
\end{equation}
outside those finitely many directions. The case $m=0$ follows without
integration.

\step{Step 3. Circular means.}
Equations~\eqref{eq:sharpness-domination} and
\eqref{eq:sharpness-ray-limit} permit dominated convergence in the
definition of $T$. The function
$\max_{\zeta^q=1}\Re(\zeta e^{i\phi})$ has period $2\pi/q$, and its
integral over one period is $2\sin(\pi/q)$.
As $\theta$ traverses an interval of length $2\pi$, the variable
$\phi=\rho\theta$ traverses exactly $q+k$ such periods.
Thus \eqref{eq:sharpness-indicator} gives
$\frac1{2\pi}\int_{-\pi}^{\pi}h(\theta)\,\mathrm d\theta
=q\sin(\pi/q)/(\pi\rho)$, proving
\eqref{eq:sharpness-asymptotic}.
\end{proof}

\subsection{Equality in the order-zero bound}

The following explicit example attains the lower bound.

\begin{prop}\label{prop:sharpness-zero}
Set
\begin{equation}\label{eq:zero-sharp-example}
 g(z)=\prod_{j=1}^\infty(1+e^{-j}z),
 \qquad \fv=(1,z,\ldots,z^{n-1},g).
\end{equation}
Then $\fv$ represents a transcendental linearly non-degenerate
holomorphic curve of order zero, and
\[
 \lim_{r\to\infty}\frac{N_1(r,\fv)}{T(r,\fv)}=1.
\]
\end{prop}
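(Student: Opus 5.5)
The plan is to compute both $T(r,\fv)$ and $N_1(r,\fv)$ asymptotically in terms of the single function $\log M(r,g)$, and to show that both are $\log M(r,g)+O(\log r)$, while $\log M(r,g)/\log r\to\infty$.

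\emph{Properties of $g$.} The product converges locally uniformly, so $g$ is entire and $g(0)=1$. Its zeros are $-e^j$, $j\ge1$, so $n_g(r)=\lfloor\log r\rfloor$ and
\[
 N_g(r)=\sum_{e^j\le r}(\log r-j)=\tfrac12(\log r)^2+O(\log r).
\]
Since all coefficients $e^{-j}$ are positive, $\log M(r,g)=\log g(r)=\sum_j\log(1+e^{-j}r)$. Splitting the sum at $j=\log r$ gives $\log M(r,g)=N_g(r)+O(\log r)$. The error from the factors $1+r/|a|$ is bounded by $\sum_{e^j\le r}\log(1+e^{j}/r)+\sum_{e^j>r}re^{-j}=O(1)$. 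Hence $g$ has order zero and is not a polynomial. The components of $\fv$ have no common zero, because the first one is $1$. They are linearly independent, since $g$ is transcendental. Thus $f$ is transcendental and linearly non-degenerate.

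\emph{The characteristic.} Pointwise, $\max(|g|,1)\le\norm{\fv}\le\sqrt{n+1}\,\max(1,r^{n-1},|g|)$ on $|z|=r$. Taking circular means gives
\[
 m(r,g)\le T(r,\fv)+O(1)\le m(r,g)+(n-1)\log r+O(1).
\]
By Jensen's formula and $g(0)=1$, $m(r,g)=\mean r{\log|g|}+\mean r{\log^-|g|}=N_g(r)+m(r,1/g)$. So I need $m(r,1/g)=O(\log r)$, or at least $o((\log r)^2)$. I would prove this with the minimum-modulus-type bound $\log|g(z)|\ge N_g(r)-\sum_j\log^-|1+e^{-j}z|-O(\log r)$, and integrate over the circle. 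For each $j$, $\mean r{\log^-|1+e^{-j}z|}$ is at most a constant, and is nonzero only when $e^j\in[r/2,2r]$, which happens for $O(1)$ indices $j$. Thus $m(r,1/g)=O(1)$, and
\[
 T(r,\fv)=\tfrac12(\log r)^2+O(\log r).
\]

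\emph{The Wronskian.} The Wronskian matrix of $(1,z,\ldots,z^{n-1},g)$ is block upper triangular after column operations. The polynomial block has constant nonzero determinant $\prod_{i<n}i!$, and the remaining entry is $g^{(n)}$. Hence $W_{\fv}=c\,g^{(n)}$ with $c\ne0$, and $N_1(r,\fv)=N_{g^{(n)}}(r)$. The key step is to show $N_{g^{(n)}}(r)=\tfrac12(\log r)^2+O(\log r)$. Since $g$ has only negative real zeros and genus zero, $g^{(n)}$ also has only real nonpositive zeros, by Laguerre--P\'olya. By Jensen's formula, $N_{g^{(n)}}(r)=\mean r{\log|g^{(n)}|}+O(1)$. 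For an upper bound, Cauchy's estimate gives $\log M(r,g^{(n)})\le\log M(2r,g)+O(1)=\tfrac12(\log r)^2+O(\log r)$. For a lower bound, I would write $\mean r{\log|g^{(n)}|}=\mean r{\log|g|}+\mean r{\log|g^{(n)}/g|}$. The first term is $N_g(r)$. For the second, $m(r,g/g^{(n)})\le T(r,g^{(n)}/g)+O(1)=m(r,g^{(n)}/g)+N(r,g^{(n)}/g)+O(1)$. The logarithmic derivative lemma (Lemma~\ref{lem:NH}) gives $m(r,g^{(n)}/g)=O(\log r)$ for order zero. The poles of $g^{(n)}/g$ lie at zeros of $g$ with order at most $n$, so $N(r,g^{(n)}/g)\le nN^{(1)}_g(r)\le n\,n_g(r)\log r=O((\log r)^2)$. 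That is too crude, and this is the main obstacle.

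To handle it, I would instead compute directly on the circle, using the explicit form of $g'/g=\sum_j(z+e^j)^{-1}$. For $|z|=r$ away from an $O(1)$ set of $j$, this sum is $\asymp\log r/r$ in a sector avoiding the negative axis. One expects $g^{(n)}/g\approx(g'/g)^n$ up to lower order, so $\log|g^{(n)}/g|=O(\log r)$ except on small arcs near $-e^j$. On those arcs the contribution to the mean is $O(1)$, by the same $\log^-$ integrability argument used for $m(r,1/g)$. This would give $\mean r{\log|g^{(n)}/g|}=O(\log r)$, hence $N_1(r,\fv)=\tfrac12(\log r)^2+O(\log r)$.

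Dividing the asymptotics for $N_1$ and $T$ then yields the limit $1$.
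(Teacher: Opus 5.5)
Most of your proof is correct: the asymptotics of $N_g$ and $\log M(r,g)$, the comparison $T(r,\fv)=m(r,g)+O(\log r)$, the identity $W_{\fv}=\bigl(\prod_{i<n}i!\bigr)g^{(n)}$, and the upper bound $N_{g^{(n)}}(r)\le\tfrac12(\log r)^2+O(\log r)$ from Jensen and Cauchy. Your detour through $m(r,1/g)$ is valid but unnecessary, since Jensen already gives $N_g(r)\le m(r,g)\le\log M(r,g)=N_g(r)+O(1)$.

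The genuine gap is the lower bound for $N_{g^{(n)}}$. You flag it yourself, and the ``direct computation on the circle'' does not close it. First, the approximation $g^{(n)}/g\approx(g'/g)^n$ is asserted, not proved. Second, your estimate $|g'/g|\asymp\log r/r$ is stated only in a sector avoiding the negative axis, which is exactly where the zeros of $g$ and $g^{(n)}$ lie. Third, the claim that the exceptional arcs contribute $O(1)$ ``by the same $\log^-$ integrability argument'' does not transfer. That argument for $m(r,1/g)$ used the explicit factorization of $g$ into linear factors $1+e^{-j}z$. The function $g^{(n)}$ has no such factorization, and its zeros, which are what you are trying to count, sit on those arcs. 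Indeed, Jensen's formula gives $\mean r{\log|g^{(n)}/g|}=N_{g^{(n)}}(r)-N_g(r)+\log g^{(n)}(0)$. So a lower bound for this mean is literally equivalent to what you need. It would require quantitative pointwise lower bounds for $|g^{(n)}|$ near the negative axis, which you do not supply.

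The missing idea is to count the zeros of $g^{(n)}$ on the real axis, as the paper does. Since $g$ is real on $\R$ with simple zeros at $-e^j$, Rolle's theorem gives a zero of $g'$ in each interval $(-e^{j+1},-e^j)$. Iterating, $g^{(n)}$ has at least $n_g(r)-n$ distinct zeros in $(-r,0)$, so $n_{g^{(n)}}(r)\ge n_g(r)-n$. Since $g^{(n)}(0)>0$, integration gives $N_{g^{(n)}}(r)\ge N_g(r)-n\log r$ for $r\ge1$. Together with your upper bound, this yields $N_1(r,\fv)=\tfrac12(\log r)^2+O(\log r)$ and hence the limit $1$.

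The paper proves the two-sided interlacing $\max\{0,n_{g_N}(r)-n\}\le n_{g_N^{(n)}}(r)\le n_{g_N}(r)$ for the finite products $g_N$ and passes to $g$ by Hurwitz's theorem. With your Cauchy--Jensen upper bound, only the Rolle lower bound is needed. The reality of the zeros of $g^{(n)}$, which you noted via Laguerre--P\'olya, is not the relevant input; the Rolle count is.
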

\begin{proof}
The product converges locally uniformly. If $x=\log r$ and $r\to\infty$,
then
\begin{equation}\label{eq:zero-sharp-growth}
 N_g(r)=\sum_{j\leq x}(x-j)=\tfrac12x^2+O(x),
 \qquad \log M(r,g)=N_g(r)+O(1).
\end{equation}
For the second estimate, the difference is
$\sum_{j\geq1}\log(1+e^{-|x-j|})$, which is uniformly bounded by two
geometric series. Jensen's formula and $g(0)=1$ give
$N_g(r)\leq m(r,g)\leq\log M(r,g)$. Thus $g$ has order zero and
$m(r,g)\sim\tfrac12(\log r)^2$.

We also need the zeros of $g^{(n)}$. Let
$g_N(z)=\prod_{j=1}^N(1+e^{-j}z)$. Its zeros are simple and negative.
Repeated applications of Rolle's theorem give, for $N\geq n$,
\begin{equation}\label{eq:zero-sharp-interlacing}
 \max\{0,n_{g_N}(r)-n\}\leq n_{g_N^{(n)}}(r)\leq n_{g_N}(r).
\end{equation}
Indeed, the $i$th zero of $g_N^{(n)}$, ordered by increasing absolute
value, lies between the $i$th and $(i+n)$th zeros of $g_N$ in that
ordering. Since $g_N^{(n)}\to g^{(n)}$ locally uniformly and
$g^{(n)}(0)>0$, Hurwitz's theorem passes
\eqref{eq:zero-sharp-interlacing} to the limit, first at radii avoiding
zeros and then by one-sided limits. It also shows that every zero of
$g^{(n)}$ is negative. Integrating the counting inequalities from a
fixed positive radius gives
$N_{g^{(n)}}(r)=N_g(r)+O(\log r)$.

The vector in \eqref{eq:zero-sharp-example} is reduced, linearly
non-degenerate, and transcendental. Direct expansion of its Wronskian gives
$W_{\fv}=(\prod_{j=0}^{n-1}j!)g^{(n)}$.
Moreover, comparison of its norm with $\max\{1,|g|\}$ gives
$T(r,\fv)=m(r,g)+O(\log r)$.
Together with \eqref{eq:zero-sharp-growth}, these identities imply
\[
 \lim_{r\to\infty}\frac{N_1(r,\fv)}{T(r,\fv)}=1.
\]
This proves that Corollary~\ref{cor:zero-ratio} is sharp.
\end{proof}


\begin{thebibliography}{Ere89a}
\raggedright

\bibitem[BR02]{BR02}
T.~Bergkvist and H.~Rullg\aa rd,
\emph{On polynomial eigenfunctions for a class of differential operators},
Math. Res. Lett. \textbf{9} (2002), no.~2--3, 153--171.
\doi{10.4310/MRL.2002.v9.n2.a3}.

\bibitem[Car33]{Car33}
H.~Cartan,
\emph{Sur les z\'eros des combinaisons lin\'eaires de $p$ fonctions
holomorphes donn\'ees},
Mathematica (Cluj) \textbf{7} (1933), 5--31.

\bibitem[Dra81]{Dra81}
D.~Drasin,
\emph{Quasi-conformal modifications of functions having deficiency sum two},
Ann.\ of Math. (2) \textbf{114} (1981), no.~3, 493--518.
\doi{10.2307/1971300}.

\bibitem[Dra87]{Dra87}
D.~Drasin,
\emph{Proof of a conjecture of F.~Nevanlinna concerning functions which
have deficiency sum two},
Acta Math. \textbf{158} (1987), no.~1--2, 1--94.
\doi{10.1007/BF02392256}.

\bibitem[DS72]{DS72}
D.~Drasin and D.~F.~Shea,
\emph{P\'olya peaks and the oscillation of positive functions},
Proc. Amer. Math. Soc. \textbf{34} (1972), no.~2, 403--411.
\doi{10.1090/S0002-9939-1972-0294580-X}.

\bibitem[Ere89a]{Ere89a}
A.~Eremenko,
\emph{A new proof of the Drasin theorem on meromorphic functions of finite
order with a maximal sum of defects.~I},
Teor. Funktsii Funktsional. Anal.\ i Prilozhen. No.~51 (1989), 107--116
(Russian); English translation, J. Soviet Math. \textbf{52} (1990),
no.~6, 3522--3529.

\bibitem[Ere89b]{Ere89b}
A.~Eremenko,
\emph{A new proof of the Drasin theorem on meromorphic functions of finite
order with a maximal sum of defects.~II},
Teor. Funktsii Funktsional. Anal.\ i Prilozhen. No.~52 (1989), 69--77
(Russian); English translation, J. Soviet Math. \textbf{52} (1990),
no.~5, 3397--3403.
\doi{10.1007/BF01099906}.

\bibitem[Ere93]{Ere93}
A.~Eremenko,
\emph{Meromorphic functions with small ramification},
Indiana Univ. Math. J. \textbf{42} (1993), no.~4, 1193--1218.
\doi{10.1512/iumj.1993.42.42055}.

\bibitem[Ere98]{Ere98}
A.~Eremenko,
\emph{Extremal holomorphic curves for defect relations},
J. Anal. Math. \textbf{74} (1998), 307--323.
\doi{10.1007/BF02819454}.

\bibitem[Ere02]{Ere02}
A.~Eremenko,
\emph{Value distribution and potential theory},
Proceedings of the International Congress of Mathematicians
(Beijing, 2002), vol.~II, Higher Education Press, Beijing, 2002,
pp.~681--690.

\bibitem[Ere15]{Ere15}
A.~Eremenko,
\emph{Holomorphic curves with few inflection points},
unpublished problem note, April~4, 2015, 2~pp.
\url{https://www.math.purdue.edu/~eremenko/dvi/wronsk.pdf}.

\bibitem[ES91]{ES91}
A.~Eremenko and M.~Sodin,
\emph{Meromorphic functions of finite order with a maximum sum of defects},
Teor. Funktsii Funktsional. Anal.\ i Prilozhen. No.~55 (1991), 84--95
(Russian); English translation, \emph{Meromorphic functions of finite
order with maximal deficiency sum}, J. Soviet Math. \textbf{59} (1992),
no.~1, 643--651.
\doi{10.1007/BF01102487}.

\bibitem[ES92]{ES92}
A.~Eremenko and M.~Sodin,
\emph{Distribution of values of meromorphic functions and meromorphic
curves from the standpoint of potential theory},
St. Petersburg Math. J. \textbf{3} (1992), no.~1, 109--136;
Russian original, Algebra i Analiz \textbf{3} (1991), no.~1, 131--164.

\bibitem[Fol99]{Fol99}
G.~B.~Folland,
\emph{Real analysis: Modern techniques and their applications},
2nd ed., Wiley-Interscience, New York, 1999.

\bibitem[Fre53]{Fre53}
M.~Frei,
\emph{Sur l'ordre des solutions enti\`eres d'une \'equation diff\'erentielle
lin\'eaire},
C. R. Acad. Sci. Paris \textbf{236} (1953), 38--40.

\bibitem[GH04]{GH04}
G.~G.~Gundersen and W.~K.~Hayman,
\emph{The strength of Cartan's version of Nevanlinna theory},
Bull. London Math. Soc. \textbf{36} (2004), no.~4, 433--454.
\doi{10.1112/S0024609304003418}.

\bibitem[Hay64]{Hay64}
W.~K.~Hayman,
\emph{Meromorphic functions},
Clarendon Press, Oxford, 1964.

\bibitem[Hio55]{Hio55} 
K.-L.~Hiong,
\emph{Sur les fonctions holomorphes dont les d\'eriv\'ees admettent
une valeur exceptionnelle},
Ann. Sci. \'Ecole Norm. Sup. (3) \textbf{72} (1955), no.~2, 165--197.
\doi{10.24033/asens.1035}.

\bibitem[Hor03]{Hor03}
L.~H\"ormander,
\emph{The analysis of linear partial differential operators.~I:
Distribution theory and Fourier analysis},
Classics in Mathematics, Springer-Verlag, Berlin, 2003,
reprint of the second (1990) edition.
\doi{10.1007/978-3-642-61497-2}.

\bibitem[KP26]{KP26}
S.~N.~Karp and K.~Purbhoo,
\emph{Universal Pl\"ucker coordinates for the Wronski map and positivity
in real Schubert calculus},
J. Amer. Math. Soc., to appear.
\doi{10.1090/jams/1087}.


\bibitem[Li11]{Li11}
B.~Q.~Li,
\emph{A logarithmic derivative lemma in several complex variables and its applications},
Trans. Amer. Math. Soc. \textbf{363} (2011), no.~12, 6257--6267.
\doi{10.1090/S0002-9947-2011-05226-8}.

\bibitem[MTV13]{MTV13}
E.~Mukhin, V.~Tarasov, and A.~Varchenko,
\emph{Bethe subalgebras of the group algebra of the symmetric group},
Transform. Groups \textbf{18} (2013), no.~3, 767--801.
\doi{10.1007/s00031-013-9232-y}.

\bibitem[Nev30]{Nev30}
F.~Nevanlinna,
\emph{\"Uber eine Klasse meromorpher Funktionen},
Septi\`eme Congr\`es des Math\'ematiciens Scandinaves
(Oslo, 1929), A.~W.~Br\o ggers Boktrykkeri, Oslo, 1930, pp.~81--83.

\bibitem[Noc83]{Noc83}
E.~I.~Nochka,
\emph{On the theory of meromorphic functions},
Dokl. Akad. Nauk SSSR \textbf{269} (1983), no.~3, 547--552 (Russian);
English translation under the title \emph{On the theory of meromorphic curves},
Soviet Math. Dokl. \textbf{27} (1983), no.~2, 377--381.

\bibitem[Pet84]{Pet84}
V.~P.~Petrenko,
\emph{Entire curves},
Vyshcha Shkola, Kharkov, 1984 (Russian).

\bibitem[Pfl46]{Pfl46}
A.~Pfluger,
\emph{Zur Defektrelation ganzer Funktionen endlicher Ordnung},
Comment. Math. Helv. \textbf{19} (1946), 91--104.
\doi{10.1007/BF02565950}.

\bibitem[Pur23]{Pur23}
K.~Purbhoo,
\emph{An identity in the Bethe subalgebra of $\C[\mathfrak S_n]$},
Proc. Lond. Math. Soc. (3) \textbf{127} (2023), no.~5, 1247--1267.
\doi{10.1112/plms.12560}.

\bibitem[Ru21]{Ru21}
M.~Ru,
\emph{Nevanlinna theory and its relation to Diophantine approximation},
2nd ed., World Scientific Publishing Co., Hackensack, NJ, 2021.
\doi{10.1142/12188}.

\bibitem[She85]{She85}
D.~F.~Shea,
\emph{On the frequency of multiple values of a meromorphic function of
small order},
Michigan Math. J. \textbf{32} (1985), no.~1, 109--116.
\doi{10.1307/mmj/1029003137}.

\bibitem[Voj97]{Voj97}
P.~Vojta,
\emph{On Cartan's theorem and Cartan's conjecture},
Amer. J. Math. \textbf{119} (1997), no.~1, 1--17.
\doi{10.1353/ajm.1997.0009}.

\bibitem[Was65]{Was65}
W.~Wasow,
\emph{Asymptotic expansions for ordinary differential equations},
John Wiley \& Sons, New York, 1965.

\bibitem[Wei69]{Wei69}
A.~Weitsman,
\emph{Meromorphic functions with maximal deficiency sum and a conjecture
of F.~Nevanlinna},
Acta Math. \textbf{123} (1969), 115--139.
\doi{10.1007/BF02392387}.

\end{thebibliography}
\end{document}